\newcommand{\R}{\mathbb R}
\newcommand{\E}{\mathbb E}
\newcommand{\me}{\medskip \noindent}
\newcommand{\bi}{\bigskip \noindent}
\newcommand{\un}{\mbox{\large$\mathbbm{1}$}}
\newcommand{\e}{\mathsmaller{E}}
\newcommand{\NvDelta}{\pmb{\Xi}}
\def\gr{ \textcolor{green}}
\def\be{\begin{eqnarray}}
\def\ee{\end{eqnarray}}
\def\ben{\begin{eqnarray*}}
\def\een{\end{eqnarray*}}
\newcommand{\TT}{\mathbb{T}}
\newcommand{\T}{\mathbb{T}}
\newcommand{\ind}{{\bf 1}}
\newcommand{\nt}{{\widetilde{N}}}
\newcommand{\wX}{{\widetilde{X}}}
\newcommand{\hX}{{\widehat{X}}}
\newcommand{\wY}{{\widetilde{Y}}}
\newcommand{\hY}{{\widehat{Y}}}
\newcommand{\td}{{t\wedge T_\delta}}
\newcommand{\Td}{{t\wedge T_{n,\delta}}}
\newcommand{\CU}{{\mathbf{C}}_1}
\newcommand{\cb}{{\overline{\mathbf{c}}_1}}
\newcommand{\K}{{\mathbf{K}}}
\newcommand{\RR}{\mathbb{R}}
\newcommand{\PP}{\mathbb{P}}
\newcommand{\EE}{\mathbb{E}}
\newtheorem{prop}{Proposition}[section]
\newtheorem{lem}[prop]{Lemma}
\newtheorem{thm}[prop]{Theorem}
\newtheorem{rem}[prop]{Remark}
\newtheorem*{assumptionA2'}{Assumption A2'}
\title{Scaling limits of  bisexual Galton-Watson processes}
\author{Vincent Bansaye\thanks{CMAP, Ecole Polytechnique, CNRS, route de Saclay, 91128 Palaiseau Cedex-France; 
E-mail: \href{mailto:vincent.bansaye@polytechnique.edu}{\texttt{vincent.bansaye@polytechnique.edu} }},
Maria-Emilia Caballero\thanks{UNAM; E-mail: \href{mailto:mariaemica@gmail.com}{\texttt{mariaemica@gmail.com}}},  
Sylvie M\'el\'eard\thanks{CMAP, Ecole Polytechnique, CNRS, route de
Saclay, 91128 Palaiseau Cedex-France; E-mail: \href{mailto:sylvie.meleard@polytechnique.edu}
{\texttt{sylvie.meleard@polytechnique.edu}}}, Jaime San Mart\'in\thanks{CMM-DIM,  Universidad de Chile, 
UMI-CNRS 2807, BASAL AFB170001, Chile; E-mail: \href{mailto:jsanmart@dim.uchile.cl}
{\texttt{jsanmart@dim.uchile.cl}}}.}
\begin{document}

\maketitle

\begin{abstract}
Bisexual Galton-Watson processes are discrete Markov chains where reproduction events 
are due to mating of males and females. Owing to this interaction, the standard branching property of
Galton-Watson processes is lost. We prove tightness for conveniently  rescaled bisexual Galton-Watson processes,
based on recent techniques developed in [4]. We also identify the
possible limits of these rescaled processes as solutions of a stochastic system,  coupling  two equations  through singular coefficients in Poisson terms added to square roots as coefficients of Brownian motions.   Under some additional integrability assumptions, pathwise uniqueness 
of this limiting  system of stochastic differential equations and convergence of the rescaled processes are obtained.  Two examples 
corresponding to mutual fidelity are considered. 
\end{abstract}

\noindent\emph{Key words:} Tightness, diffusions with jumps, scaling limits, stochastic calculus, Galton-Watson
\medskip
\noindent\emph{MSC 2010:} 60J27, 60J75, 60F15, 60F05, 60F10, 92D25.

\tableofcontents

\section{Introduction}
Galton-Watson  processes describe population dynamics
for clonal populations without interactions. Biological reasons  have led to generalize these processes to bisexual Galton-Watson processes modeling sexual  reproduction. The number of pairing of males and females in one generation  is then modeled by  a mating function which can have different forms depending on different reproduction strategies: monogamous or polygamous reproduction, fidelity or not, one dominant male, etc.  These bisexual Galton-Watson processes have been introduced by Daley \cite{Daley} and  studied in particular by  Alsmeyer and R\"osler \cite{AR1, AR2}, see \cite{A, Hull} for surveys. 

We are interested in the scaling limit of a  bisexual Galton-Watson process. We consider a population composed of females and males. The two subpopulations have  their own dynamics (clonal reproduction or intrinsic death) but can also interact through  the sexual reproduction. In the latter, the mating function plays a main role. This work extends a previous paper  \cite{BCM},  in which   a general method was proposed for investigating scaling limits of finite dimensional Markov chains to diffusions with jumps. This method was applied   to two one-dimensional cases in random environment.
%, extending the classical % Wright-Fisher model and  Galton-Watson process with interaction. 
In both cases the uniqueness of the  limiting one-dimensional  diffusion process 
was based on the works of  Fu and Li \cite{FL}, Dawson and Li \cite{DL} and Li and Pu \cite{LP}, where the authors generalized the well-known uniqueness result for  Feller diffusion,  with H\"older-$1/2$ regularity in the diffusion coefficient. 

In the present situation,  the two populations, females and males, are coupled by the mating,  which makes the problem  more difficult. We  use the general result developed in \cite{BCM} to prove tightness and identification of the scaling limits of the bisexual processes. The limiting values are solutions of a two-dimensional system of  coupled stochastic differential equations with jumps and non regular coefficients. The main novelty concerns the uniqueness of these limiting values. Indeed the coupling of the two equations  through singular coefficients in Poisson terms added to square roots as coefficients of Brownian motions raises a deep difficulty. We resolve the problem under an integrability condition on the jump measure which covers a large number of cases.

 \bi The bisexual Galton-Watson process $Z^N=( F^N, M^N)$ that we consider is defined as follows.  It is a Markov process  taking values in $\mathbb N^2$ and satisfying the following induction identity for $n\geq 0$,
\be
F^N_{n+1} &=&  F^N_{n}  + \sum_{p=1}^{F^N_n} \mathcal E^{f, N}_{n,p}+ \sum_{p=1}^{g_N(F^N_n, M^N_n)} L^{f,N}_{n,p}, \\
M^N_{n+1} &=&  M^N_{n} +\sum_{p=1}^{M^N_n} \mathcal E_{n,p}^{m,N}+ \sum_{p=1}^{g_N(F^N_n,M^N_n)} L_{n,p}^{m,N},
\ee
where  $N\in \mathbb{N}$ scales the population size and for each $N$, the family of random variables\\  $\{ M_0^N,F_0^N,\mathcal E_{n,p}^{f,N},   \mathcal E^{m,N}_{n,p}, (L_{n,p}^{f,N},L^{m,N}_{n,p}) : n,p \geq 1\} $  is mutually independent.
The random variables $(M_0^N,F_0^N)$ are integer-valued and    the random variables $(\mathcal E_{n,p}^{f,N},   \mathcal E^{m,N}_{n,p}, (L_{n,p}^{f,N},L^{m,N}_{n,p}))$  are identically distributed for  $n,p\geq 1$ and take values in $\{-1,0,1, 2, \ldots\}=\{-1,0\} \cup \mathbb N$. We  denote their distributions as follows:
$$ \mathcal E_{n,p}^{\bullet,N}\stackrel {d}{=}  \mathcal E^{\bullet,N}, \qquad \quad
%\qquad \qquad \mathcal E_{n,i}^{m,N}\stackrel {d}{=}  \mathcal E^{m,N}, \qquad
(L_{n,p}^{f,N},L^{m,N}_{n,p})\stackrel {d}{=}(L^{f,N},L^{m,N}),$$
for $\bullet \in \{f,m\}$. 
The terms related to the random variables $ \mathcal E_{n,p}^{\bullet,N}$  may model either survival  without offsprings 
($\mathcal E^{\bullet,N}=0$) or death  without offsprings  ($\mathcal E^{\bullet,N}=-1$) or more complex event including  an asexual  clonal reproduction with several offsprings ($\mathcal E^{\bullet,N}\in \mathbb N$). The random variables $(L_{n,p}^{f,N},L^{m,N}_{n,p})$ model the sexual reproduction issued from mating. 
 
\me 
The class of  bisexual Galton-Watson process defined above
combines  the classical asexual Galton-Watson processes and the bisexual Galton-Watson processes introduced by Daley \cite{Daley}.
  
  \me  Our main result will be applied in two cases. The particular case
 where $ \mathcal E_{n,p}^{\bullet,N}$ are  Bernoulli random variables with values in $\{0,-1\}$, describes whether or not individuals survive in the next generation. A second interesting example  concerns the case where $ \mathcal E_{n,p}^{\bullet,N}$ are nul and 
 $(L_{n,p}^{f,N},L^{m,N}_{n,p})\stackrel{d}{=}-(1,1)+(L_+^{f,N},L_+^{m,N})$, with $L_+^{\bullet, N}\in \{0,1,\ldots\}$. This case can be  interpreted as the replacement of the mating pair of female and male  by a random number of  females  and males in the next generation, via sexual reproduction. The function $g_N$ counts the number of mating in one generation. One of the main example of  mating function is $g_N(y,z)=y\wedge z$ and we illustrate our results with this function. It counts the number of pairing of male and female when their number is given by $y$ and $z$. Modeling the effective sexual interaction by such a function corresponds to monogamous mating with mutual fidelity.

\bi In Section 2, we state our assumptions and the main results and we develop the  two applications. We prove the tightness and  identification  of the sequence of scaled sexual Galton-Watson processes in Section 3. We then conclude the proof of the convergence theorem by using the uniqueness result proved in Section 4. In the latter,  we prove a  uniqueness result in a slightly more general framework which could be applied to different situations. This is is the main difficulty of the paper.

\section{Main results and applications}

 \subsection{Assumptions and  statement of convergence}
 
 \me Let us  state the assumptions under which we  obtain our main result. These assumptions will be partially relaxed  in the next sections for  weaker results.

 \me The two first assumptions govern the scaling of the reproduction and death events.  
 
 \noindent We introduce  a truncation function $h:\mathbb R\rightarrow \mathbb R$, which  is a 
continuous bounded  function coinciding with Identity in a neighborhood of zero.  For convenience, we assume in this paper   that $h(u)=u$ for $u\in [-1,1]$ and as an example, one can consider $h(u)=(-1)\vee (u \wedge 1).$

\me {\bf Assumption A.} \emph{ We consider a non-negative sequence $v_N$ going to $+\infty$  and we assume that}
  
\me {\bf (A1) -} \emph{For $\bullet \in \{f,m\}$, there exist $\alpha_\bullet\in \R$,   $\sigma_\bullet\geq 0$ and a  measure $\nu_\bullet$   on $(0,\infty)$ satisfying}
$\ \int_0^{\infty} (1\wedge u^2) \, \nu_\bullet(du) < +\infty$, such that 

\me \be
&&  \lim_{N\to \infty }  v_NN\,\E(h(\mathcal E^{\bullet,N}/N
))= \alpha_\bullet; \   \ 
  \lim_{N\to \infty }  v_NN \, \E(h^2(\mathcal E^{\bullet,N}/N
))= \sigma_\bullet^2 + \int_{(0,\infty)}  h^2(u) \nu_{\bullet}(du) ; \nonumber \\ 
&&
\qquad \qquad \qquad  \lim_{N\to \infty }    v_NN \, \E({\phi}(\mathcal E^{\bullet,N}/N
))  =  \int_{0}^{\infty} \phi(u) \nu_\bullet(du) \label{condtripGW}
\ee
\emph{ for any $\phi:\R\rightarrow \R $ continuous bounded and null in a neighborhood of $0$}. 

  \bi {\bf (A2) -} \emph{For $\bullet,\star \in \{f,m\}$,  there exist  $\alpha^S_{\bullet} \in \R$ and $\sigma^S_{\bullet,\star}\in   \R^+$      and a measure $\nu_{S}$  on $[0,\infty)^2$ satisfying
$\ \int_{[0,\infty)^2} 1\wedge (u_1^2+u_2^2) \, \nu_{S}(du_1, du_2)<+\infty$, such that }
 
 \me
 \be
&&  \lim_{N\to \infty }  v_NN\,\E\Big(h\Big(L^{\bullet,N}/N\Big)\Big)= \alpha_{\bullet}^S,\nonumber \\
&&
 \lim_{N\to \infty }  v_NN \, \E\Big(h_{\bullet}h_{\star} \Big((L^{f,N},  L^{m,N})/N\Big) \Big)=  (\sigma_{\bullet,\star}^S)^2 + \int_{[0,\infty)^2}  h_{\bullet}h_{\star}(u_1,u_2)  \nu_{S}(du_1, du_2)\nonumber 
 \ee
 \emph{where $h_{f}(u_1,u_2)=h(u_1)$ and  $h_{m}(u_1,u_2)=h(u_2)$, and}
% \lim_{N\to \infty }  v_NN \, \E\Big(h^2\Big(L^{\bullet,N}/N\Big) \Big)=  (\sigma_{\bullet,\bullet}^S)^2 + \int_{\R_{+}^2}  h^2( u_{\bullet})  \nu_{S}(du_1, du_2); \nonumber \\ 
%&&   \lim_{N\to \infty }  v_NN \, \E\Big(h\Big(L^{f,N}/N\Big)h\Big(L^{m,N}/N\Big) \Big)=  (\sigma_{f,m}^S)^2 + \int_{\R_{+}^2}  h( u_{1})h(u_2)  \nu_{S}(du_1, du_2); \nonumber \\   
\be
&&
  \lim_{N\to \infty }    v_NN \, \E\Big(\phi\Big((L^{f,N}, L^{m,N})/N\Big)\Big)  =  \int_{[0,\infty)^2} \phi(u_{1}, u_{2}) \nu_{S}(du_{1}, du_{2}) \label{condtripGW}
\ee \emph{for any $\phi:\R^2\rightarrow \R$ continuous bounded and null in a neighborhood of $0$.}  \\

Assumption {\bf (A1)} yields the classical necessary and sufficient condition for convergence of rescaled (asexual) Galton-Watson processes, see for instance  Grimwall \cite{Grimwall}, Lamperti  \cite{Lamp2}, Bansaye-Simatos \cite{BS} or \cite {BCM}.  Assumption {\bf (A2)} provides a natural bisexual counterpart. 

\bi Let us now  introduce the assumptions on the mating function.

\me {\bf Assumption B.}  
\emph{There exists a   non-negative function $g$   on $\mathbb{R}_{+}^2$ such that  \\ 
{\bf (B1)-} The sequence of   mating functions    $g_N$ (defined on $\mathbb{N}^2$) uniformly converges  to $g$, as $N$ tends to infinity :
}
 \be
 \label{CVmating}
  \sup_{(y,z) \in (\mathbb{ N}/N)^2}  \bigg|{g_N(Ny,Nz)\over N} - g(y,z)\bigg|  \stackrel{N\rightarrow \infty}{\longrightarrow} 0.
 \ee
{\bf (B2)-} \emph{The function  $g$ is dominated by $y\wedge z$: there exists $a,b\geq 0$ such that for any 
 $y,z\geq 0$,}
 %not exponentially growing at infinity} 
\be
\label{g-exp}  g(y,z)\leq a(y\wedge z)+b.\ee

\noindent {\bf (B3)-} \emph{The function $g$ is locally Lipschitz and $g(y,0)=g(0,z)=0$ for all $y,z$.}

\smallskip \noindent {\bf (B4)-} \emph{The function $g$ satisfies the ellipticity assumption: for any positive $\delta,n$,}
%, there exists a $\zeta=\zeta(\delta,M)>0$, such that}
\be
\label{hyp-g}
 \inf\{g(y,z):\, \delta\le y\le n,\delta\le z\le n\}>0.
\ee

 %\me Note that $(ii)$ and $(iii)$ can be seen as redundant but they will be used separately in the different steps of the proof.
 
 \bi As a main example, we have in mind the   monogamous mating with mutual fidelity for which $g_N(y,z) =g(y,z)= y\wedge z. $
We refer to \cite{A}  about  mating functions and their impact on population dynamics and to  \cite{AR1}  and \cite{AR2}  in the particular case of  promiscuous mating. Assumption
{\bf (B2)} is restrictive regarding the behavior of $g$ when $y$ or $z$ goes to infinity.  But it covers our main modeling motivations. Besides it  can certainly be relaxed before explosion time thanks to localization arguments to capture other mating functions.\\

 An additional moment assumption for the jump measure will be involved for pathwise uniqueness.
In this section for convenience we make the following first  moment assumption and refer to the next sections for comments and extensions.

\bi  {\bf Assumption C.}
\emph{We denote by $\nu$ the measure on $\RR^2_+$ given by
$$
\nu(du_1,du_2)=\nu_f(du_1)\delta_0(du_2)+\delta_0(du_1)\nu_m(du_2)+\nu_S(du_1,du_2).
$$
We assume that 
\be
\label{hyp-nu}
\int_{\mathbb{R}_{+}^2}  (u_1+u_2) \nu(du_1,du_2) <+\infty.
\ee}

%\begin{lem}
%$$
 %\left(1-\E\left(f_k (\mathcal E^{f,N}/N)\right)^{[Ny]}\right) \approx  [Ny]\, \E\Big(f_{k}\Big(\mathcal E^{f,N}/N\Big)\Big)$$
%\end{lem}\bigskip

\bi Let us now state our main result.

\begin{thm} 
\label{main}  Let us suppose  that Assumptions {\bf A},  {\bf B}, {\bf C} hold and that  the sequence  $Z_0^N/N$ converges 
weakly to $Z_0=(F_0,M_0)\in [0,\infty)^2$. Then, 
the sequence of processes $(Z^N_{[v_N.]}/N)_{N}$ converges in law in  $\mathbb D([0,\infty), [0,\infty)^2)$ to the unique strong  solution 
$Z=(F,M)$  of  
\be
\label{EDS-limit}
F_t &=& F_0+\int_0^{t} \alpha_f\,  F_sds + \int_0^{t} \sigma_f \sqrt{F_s}\, dB_s^f 
\nonumber \\
&& +\int_0^{t} \int_{[0,\infty)^2} {\bf 1}_{\theta\leq F_{s-}}h(u) \widetilde N^f(ds, d\theta, du)+\int_0^{t} 
\int_{[0,\infty)^2} {\bf 1}_{\theta\leq F_{s-}}(u-h(u))  N^f(ds, d\theta, du)  \nonumber \\
&&   + \int_0^{t}\alpha_f^S\, g(F_s,M_s) \, ds+  \int_0^{t}    \sqrt{g(F_s, M_s)} dB^{1}_s \nonumber\\
&&+\int_0^{t} \int_{[0,\infty)^3} {\bf 1}_{\theta\leq g(F_{s-}, M_{s-})}
h(u_1) \widetilde N^S(ds, d\theta, du_1,du_2)\nonumber \\
&&+ \int_0^t\int_{[0,\infty)^3}  {\bf 1}_{\theta\leq g(F_{s-},M_{s-})}(u_1 - h(u_1) ) N^S(ds,  d\theta,du_1,du_2),
\nonumber\\
M_t &=& M_0+\int_0^{t} \alpha_m M_sds 
+ \int_0^{t} \sigma_m \sqrt{M_s}\, dB_s^m\nonumber \\
&& +\int_0^{t} \int_{[0,\infty)^2} {\bf 1}_{\theta\leq M_{s-}}h(u) \widetilde N^m(ds, d\theta, du)
+\int_0^{t} \int_{[0,\infty)^2} {\bf 1}_{\theta\leq M_{s-}}(u-h(u))  N^m(ds, d\theta, du)  \nonumber \\
&&+ \int_0^{t} \alpha_m^S\, g(F_s,M_s) \, ds+\int_0^{t}  \sqrt{g(F_s, M_s)} dB^{2}_s      \nonumber \\
\label{system:main}
&&+\int_0^{t} \int_{[0,\infty)^3} {\bf 1}_{\theta\leq g(F_{s-}, M_{s-})}
h(u_{2}) \widetilde N^S(ds, d\theta, du_{1},du_{2})\nonumber \\
&& + \int_0^t\int_{[0,\infty)^3}  {\bf 1}_{\theta\leq g(F_{s-},M_{s-})}(u_2 - h(u_2))  N^S(ds,  d\theta,du_1,du_2),
\ee
 where  $B^{f}$ and $B^{m}$ are one-dimensional Brownian motions, 
$$\begin{pmatrix} B^1 \\ B^2 \end{pmatrix} = \begin{pmatrix}  \sqrt{(\sigma_{f}^S)^2-(\sigma_{f{m}}^S)^4/(\sigma_{m}^S)^2}& (\sigma_{fm}^S)^2/\sigma_{m}^S  \\  0   &  \sigma_{m}^S \end{pmatrix}.B ,$$
 $B$ is a two-dimensional Brownian motion, 
 $N^S$, $N^{f}$ and $N^{m}$  are  Poisson point measures on $[0,\infty)^4$ and $[0,\infty)^3$, respectively   with intensity measures  $dsd\theta \nu_{S}(du_1,du_2)$,   $ds d\theta\nu_1(du) $ and $dsd\theta\nu_{m}(du) $, $\widetilde{N}$ being the compensated measure of $N$ and all these processes are independent. 
 \end{thm}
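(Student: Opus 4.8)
The plan is to follow the standard three-step scheme for diffusion approximations of Markov chains: (i) tightness of the rescaled sequence, (ii) identification of every limit point as a weak solution of the limiting system, and (iii) pathwise uniqueness for that system. Steps (i)--(ii) are carried out in Section~3 and step (iii) in Section~4; together they yield convergence in law through a Yamada--Watanabe argument.

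For tightness, I would set $\bar Z^N_t = Z^N_{[v_N t]}/N$ and apply the general criterion of \cite{BCM}. Decomposing each increment $\bar Z^N_{t+1/v_N}-\bar Z^N_t$ into its asexual part (the sums of the $\mathcal E^{\bullet,N}_{n,p}$, indexed by $F^N_n$ and $M^N_n$) and its sexual part (the sums of the $L^{\bullet,N}_{n,p}$, indexed by $g_N(F^N_n,M^N_n)$), I would estimate the first two conditional moments of the rescaled increments. Assumption~\textbf{(A1)} controls the asexual contribution, producing a mean of order $\alpha_\bullet F$ and a continuous quadratic variation of order $\sigma_\bullet^2 F$ together with a jump part governed by $\nu_\bullet$; Assumption~\textbf{(A2)} controls the sexual contribution in the same way with the weight $g_N(F^N_n,M^N_n)/N$; and Assumptions~\textbf{(B1)}--\textbf{(B2)} ensure that this weight is uniformly close to $g(F,M)\le a(F\wedge M)+b$, hence uniformly bounded on compacts. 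These moment bounds, combined with the Aldous--Rebolledo control of the jumps, give tightness in $\mathbb D([0,\infty),[0,\infty)^2)$ and a uniform-in-$N$ bound preventing escape to infinity.

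For identification, I would work with the discrete generator $A^N\phi(z)=v_N\,\mathbb E[\phi(Z^N_{n+1}/N)-\phi(z)\mid Z^N_n=Nz]$ acting on smooth, compactly supported $\phi$. Splitting again into asexual and sexual sums and passing to the limit via the triple convergences in \textbf{(A1)}, \textbf{(A2)} and the uniform convergence \textbf{(B1)}, I expect $A^N\phi\to L\phi$, where $L$ is the generator carrying the drift, square-root diffusion and jump coefficients appearing in \eqref{EDS-limit}; the linear map sending the driving Brownian motion $B$ to $(B^1,B^2)$ is fixed by the limiting covariances $(\sigma^S_{\bullet,\star})^2$, and Assumption~\textbf{C} makes the corrections $u-h(u)$ integrable so that the large-jump terms are well defined. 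A standard representation step then realises the limiting martingale problem on an extended space through the Brownian motions $B^f,B^m,B$ and the Poisson measures $N^f,N^m,N^S$ of Theorem~\ref{main}; this simultaneously establishes weak existence for the system.

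The main obstacle is step (iii), pathwise uniqueness for \eqref{EDS-limit}, proved in Section~4. The difficulty is structural: the two coordinates are coupled through the common square-root diffusion coefficient $\sqrt{g(F_s,M_s)}$, which is only H\"older-$1/2$ and degenerates at the boundary where $g(F,0)=g(0,M)=0$ (Assumption~\textbf{(B3)}), and simultaneously through the shared Poisson measure $N^S$ with the singular indicators ${\bf 1}_{\theta\le g(F_{s-},M_{s-})}$. The one-dimensional Yamada--Watanabe estimates that tame a single square-root term do not directly close when the coupling appears at once in the drift, the Brownian part and the jump part; this is where the extra integrability Assumption~\textbf{C} on $\nu$ and the ellipticity \textbf{(B4)} enter, the latter allowing a localisation that keeps the process away from the degenerate boundary. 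Once pathwise uniqueness is established, weak existence plus pathwise uniqueness give uniqueness in law and a strong solution; combined with the tightness and identification above, uniqueness of the limit point upgrades relative compactness to convergence in law of the whole sequence $(Z^N_{[v_N\cdot]}/N)_N$ to this strong solution, as claimed.
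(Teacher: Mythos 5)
Your overall architecture (tightness, identification, pathwise uniqueness, then a Yamada--Watanabe argument to upgrade to strong uniqueness and convergence of the whole sequence) is the same as the paper's, and your sketch of step (iii) is a fair, if compressed, description of Section~\ref{sec:uniqueness} (localisation away from the degenerate boundary via \textbf{(B4)}, with the integrability condition on $\nu$ controlling the small jumps). The genuine gap is in your steps (i)--(ii). You propose to estimate ``the first two conditional moments of the rescaled increments'' and to deduce from them both tightness and a uniform-in-$N$ bound preventing escape to infinity. Under Assumption~\textbf{A} these moments need not exist: the hypotheses only control \emph{truncated} quantities, namely $v_NN\,\E(h(\mathcal E^{\bullet,N}/N))$, $v_NN\,\E(h^2(\mathcal E^{\bullet,N}/N))$ and expectations of bounded test functions vanishing near $0$, and they are perfectly compatible with reproduction laws $\mathcal E^{\bullet,N}$, $(L^{f,N},L^{m,N})$ having infinite mean for every $N$ (place a heavy but asymptotically negligible tail at scales much larger than $N$; this changes no limit in \textbf{A} or \textbf{C}, yet makes $\E(\mathcal E^{\bullet,N})=+\infty$). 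Then the conditional mean and variance of the raw increments are infinite, the Aldous--Rebolledo criterion cannot be run on them, and no moment bound preventing explosion is available. This is not a technicality: the paper notes that explosion can genuinely occur under Assumption~\textbf{A}, which is why its tightness statement (Proposition~\ref{prop=tightness}) is formulated in $\mathbb D([0,\infty),[0,\infty]^2)$, with the compactified state space, and why convergence is first obtained only before the explosion time.

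The device your proposal omits is exactly the one the paper introduces to circumvent this: the exponential compactification $X^N_n=(e^{-F^N_n/N},e^{-M^N_n/N})$ together with the monomial test-function class $\mathcal H=\{(u_1,u_2)\mapsto u_1^iu_2^j\}$. For these test functions the characteristics ${\cal G}^N_x(H_{i,j})$ reduce, thanks to the mutual independence of the offspring variables, to products of Laplace-transform-type factors, $A^N_{k,\ell}=a_1^Na_2^Na_3^N$ in \eqref{techno}, which are automatically bounded; their uniform convergence (Lemmas~\ref{CV-charac} and \ref{Gx}) follows from Assumptions~\textbf{A}, \textbf{(B1)}--\textbf{(B2)}, and the identification is carried out in the compact space and transported back to $(F,M)$ by It\^o's formula up to the explosion time. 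Non-explosion and uniqueness are then supplied by Section~\ref{sec:uniqueness} under \textbf{C}. A related difficulty affects your generator step: for a general smooth compactly supported $\phi$, the expectation $A^N\phi$ involves the law of a compound sum of order $N$ i.i.d.\ terms and does not factorise, so passing to the limit requires triangular-array arguments; for the paper's exponential/monomial test functions the factorisation is exact, which is what makes the limit computable uniformly in the state variable. If you wish to keep a direct moment or generator approach, you must first truncate the offspring variables and control the truncation error uniformly in $N$ and in the state, which essentially reconstructs the paper's argument; as written, your steps (i)--(ii) fail for admissible heavy-tailed reproduction laws.
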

 
 \me Note that in the previous statement, $\sigma^S_{\bullet,\bullet}$ is denoted by $\sigma^S_{\bullet}$ for convenience. Besides Assumption {\bf A} ensures that the quantity $(\sigma^S_f)^2 -  (\sigma^S_{fm})^4/(\sigma^S_m)^2 $ is positive. It can be deduced   from Cauchy-Schwarz inequality applied to $\E\Big(\chi_{\varepsilon}(L^{f,N})\chi_{\varepsilon}(L^{m,N})\Big)$ by choosing $\chi_{\varepsilon}=h(1-\phi_{\varepsilon})$  with $\phi_{\varepsilon}$ an even  continuous bounded function on $\mathbb{R}$  null in $[0,\varepsilon]$ and equal to $1$ in $[2\varepsilon,\infty)$. % to be the product of the truncation function $h$ and $1- a continuous function bounded converging to $1_{x\ne 0}$ as $\varepsilon\rightarrow 0$.
  
 \me Note also  that if instead of Assumption  {\bf C}, we only require
$
\int_{\mathbb{R}_{+}^2}  1\wedge (u_{1}+u_2) \, \nu(\text{d}u_1,\text{d}u_2) <+\infty,
$
then the convergence  hold before the explosion time $\mathbb T_e=\lim_{n\rightarrow \infty}\inf\{ t \geq 0 : F_t \geq n \ \text{or } M_t\geq n\}.$ Moreover the tightness and identification can be achieved under the optimal two-order moment condition : $\int_{\mathbb{R}_{+}^2}  1\wedge (u_{1}^2+u_{2}^2)  \, \nu(du_1,du_2) <+\infty.$
The proof of the uniqueness requires a stronger moment assumption close to $0$, which slightly extends the first moment condition.

\me Lastly, note that because of {\bf (B3)}, $(0,0)$ is an absorbing point and any solution issued from $\mathbb{R}_{+}^2$ stays in $\mathbb{R}_{+}^2$.

\me The proof of Theorem \ref{main} will be given in Section \ref{sec:proof-thm}. It  consists in first  proving the tightness and the identification of the limit under  Assumptions {\bf A} and {\bf (B1)},  {\bf (B2)}, using a suitable functional space which exploits the independence of random variables. The uniqueness  of the limit will be proved with  additional Assumptions {\bf (B3)--(B4)} and {\bf C}. This uniqueness result is the main point of the paper. Indeed, the stochastic system given in \eqref{EDS-limit} is a true coupled system (the coupling being due to the mating), with radical diffusion coefficients and accumulating  jumps. 
At the best of our knowledge,  it is the first result of this type in the case of non polynomial coefficients. For the polynomial case, we  mention the  general approach    for the study of the martingale problem developed in e.g. \cite{CLS, FiLa}. Our   uniqueness  result is  stated and proved in a general framework in Section 4. 

\subsection{Applications}

We consider now two examples  for which we apply the previous result.
 For sake of clarity, we focus on the classical mating function
$$g(y,z)=y\wedge z.$$

\subsubsection{Survival and sexual reproduction}

In this first application, the probability for a given mating to leave one offspring or more in the next time step (generation)
 is low. But a large number of offsprings may be produced in a single mating. This random integer number  is denoted by $D^N$.
  The sex is  determined independently for each offspring: each new born is  a female  (resp. a male) with probability $q\in (0,1)$ (resp. $1-q$). 
Besides, we fix   $p_f,p_m\geq 0$ to determine   the death  probability of males and females in each generation.\\

We make the following assumption.

 \me {\bf Assumption D.} \emph{Let us  consider  $\alpha\in [0,\infty)$ and  a measure $\mu$  on $[0,\infty)$ 
 such  that}
\be
\label{fin-mu}
\int_{0}^{\infty}  u \mu(du)<\infty.\ee
\emph{We also consider   a  sequence   $(v_N)_N$ of positive real numbers tending to infinity and a truncation function $h$ and assume that }
$$
%\label{scalesurvival}
  \lim_{N\to \infty }  v_NN\,\E\left(h\left(\frac{D^N}{N}\right)\right)=\alpha+ \int_{0}^{\infty} h(u) \, \mu (du); \   \  \lim_{N\to \infty}    v_NN \, \E\left({\phi}\left(\frac{D^N}{N}
\right)\right)  =  \int_{0}^{\infty} \phi(u) \, \mu(du).$$
\emph{for any $\phi$ continuous bounded and null in a neighborhood of $0$. }

\bi
We observe that the choice of the truncation function $h$ does not impact
 $(\alpha,\mu)$. Moreover, given a  triplet  $(\mu, \alpha,(v_N)_N)$ as previously,  the  sequence  of random variables 
$(D^N)_N$ satisfying
  $$ \  P(D^N=1)=\frac{\alpha}{v_N},\qquad \forall u \in  [2/N,\infty) , \quad \mathbb P(D^N\geq Nu)=\frac{\mu[u,\infty)}{Nv_N}$$
satisfies Assumption {\bf D}.

%The random variable $O^N$ yield the total number of offsprings after a mating of a couple. The sex for each offspringis    then chosen independently 

\bi We now  give  by induction a formal definition of  the bisexual Galton-Watson $Z^N=( F^N, M^N)$ with sexual reproduction $D^N$, sex ratio  $q$ and death rates $(p_f,p_m)$.  
Given $(F^N_0,M^N_0)\in  \mathbb{N}^2$, we define for $n\geq 0$, 
\be
F^N_{n+1} &=& F^N_n+  \sum_{p=1}^{F^N_n} \mathcal E^{f, N}_{n,p}+ \sum_{p=1}^{M^N_n\wedge  F^N_n} L^{f,N}_{n,p}, \\
M^N_{n+1} &=& M^N_n+ \sum_{p=1}^{M^N_n} \mathcal E_{n,p}^{m,N}+ \sum_{p=1}^{M^N_n\wedge F^N_n} L_{n,p}^{m,N}, \label{scalesurv}
\ee
where for each $N$, the family of random variables  $\{M_0^N,F_0^N$, $\mathcal E_{n,p}^{f,N},   \mathcal E^{m,N}_{n,p}, (L_{n,p}^{f,N},L^{m,N}_{n,p}) : n,p\geq 1\}$ is mutually independent. Moreover
for each $n,p\geq 1$ and $\bullet \in \{f,m\}$, 
%$\mathcal E_{k,i}^{f,N}$  and $\mathcal E^{m,N}_{k,i}$ are
%given by :
 $$\PP(\mathcal E_{n,p}^{\bullet,N}=-1)=1-\PP(\mathcal E_{n,p}^{\bullet,N}=0)=p_\bullet/v_N$$
 % \qquad \PP(\mathcal E_{n,i}^{m,N}=-1)=1-\PP(\mathcal E_{n,i}^{m,N}=0)=p_m/v_N.$$
corresponds  to the probability of death of each female and each male, while
$$(L_{n,p}^{f,N}, L^{m,N}_{n,p}) \stackrel{d}{=}  (L^{f,N}, L^{m,N})=\sum_{j=1}^{D^N}  \left( \mathcal B_j, 1- \mathcal B_j \right)$$
describes  the sex repartition of offsprings, 
where $(\mathcal B_j)_{j\geq 1}$ are independent  Bernoulli random variables with parameter $q$ independent of $D^N$.

\begin{thm} \label{Appliuno}
Under the  weak convergence of $Z_0^N/N$ to $Z_0=(F_0,M_0)\in [0,\infty)^2$ and Assumption {\bf D}, the sequence of processes $(Z^N_{[v_N.]}/N)_N$ converges  in law in  $\mathbb D([0,\infty), [0,\infty)^2)$  to the unique   strong solution $Z=(F,M)$  of  the following SDE :
\be
F_t &=& F_0-\int_0^{t} p_f F_sds +\alpha q \int_0^{t} (F_s \wedge M_s) \, ds +\int_0^t\int_{[0,\infty)^2}  {\bf 1}_{\theta\leq F_{s-} \wedge M_{s-}} qu  N(ds,  d\theta, du),
\label{App1} \\
M_t &=& 
M_0-\int_0^{t} p_m M_sds +\alpha(1-q) \int_0^{t} (F_s \wedge M_s) \, ds
+ \int_0^t\int_{[0,\infty)^2}  {\bf 1}_{\theta\leq F_{s-} \wedge M_{s-}} (1-q)u  N(ds,  d\theta, du),\nonumber
\ee where $N$ is a Poisson point measure on 
$[0,\infty)^3$ with intensity measure $ ds d\theta \mu(du)$.
\end{thm}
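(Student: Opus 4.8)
The plan is to obtain Theorem~\ref{Appliuno} as a direct specialization of Theorem~\ref{main}. I would first recognize that the process \eqref{scalesurv} is exactly of the general form, with asexual increments $\mathcal E^{\bullet,N}\in\{-1,0\}$ satisfying $\PP(\mathcal E^{\bullet,N}=-1)=p_\bullet/v_N$, sexual increments given by the Bernoulli thinning $(L^{f,N},L^{m,N})=\sum_{j=1}^{D^N}(\mathcal B_j,1-\mathcal B_j)$, and mating function $g_N(y,z)=g(y,z)=y\wedge z$. For this $g$, Assumption~\textbf{B} is immediate: \textbf{(B1)} holds exactly since $g_N(Ny,Nz)/N=y\wedge z$; \textbf{(B2)} holds with $a=1$, $b=0$; \textbf{(B3)} holds because $y\wedge z$ is $1$-Lipschitz and vanishes on the axes; and \textbf{(B4)} holds because the infimum in \eqref{hyp-g} equals $\delta$. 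It then remains to verify Assumptions~\textbf{A} and \textbf{C}, compute the limiting parameters, and simplify the system \eqref{EDS-limit}.

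For Assumption~\textbf{(A1)} I would use that $\mathcal E^{\bullet,N}/N\in\{-1/N,0\}$, so that $h(\mathcal E^{\bullet,N}/N)=\mathcal E^{\bullet,N}/N$ for every $N\ge1$. A direct computation then gives $v_NN\,\E(h(\mathcal E^{\bullet,N}/N))=-p_\bullet$ and $v_NN\,\E(h^2(\mathcal E^{\bullet,N}/N))=p_\bullet/N\to0$, while any $\phi$ null near $0$ vanishes on $\{-1/N,0\}$ for $N$ large. Hence $\alpha_\bullet=-p_\bullet$, $\sigma_\bullet=0$ and $\nu_\bullet=0$: the clonal part contributes only the linear drift $-p_\bullet$, with no diffusion and no jumps.

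The crux is Assumption~\textbf{(A2)} for the thinned pair. Conditionally on $D^N$, the variable $L^{f,N}$ is binomial with parameters $(D^N,q)$ and $L^{m,N}=D^N-L^{f,N}$, so $\E(L^{\bullet,N}\mid D^N)=q_\bullet D^N$ and $\mathrm{Var}(L^{f,N}\mid D^N)=q(1-q)D^N$, where I write $q_f=q$, $q_m=1-q$. I would first identify the jump measure: for $\phi$ continuous bounded and null near $0$, the test function $\psi(u)=\phi(qu,(1-q)u)$ enjoys the same properties, and the conditional law of large numbers (quantified by the above conditional variance) lets me replace $\phi((L^{f,N},L^{m,N})/N)$ by $\psi(D^N/N)$ up to a negligible error; applying Assumption~\textbf{D} to $\psi$ yields the limit $\int_0^\infty\phi(qu,(1-q)u)\,\mu(du)$. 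This shows that $\nu_S$ is the image of $\mu$ under $u\mapsto(qu,(1-q)u)$, so that $\int h_\bullet\,\mathrm{d}\nu_S=\int_0^\infty h(q_\bullet u)\,\mu(du)$. For the drift, the exact conditional mean $\E(L^{\bullet,N}\mid D^N)=q_\bullet D^N$ lets the contribution of $D^N$ pass linearly through the thinning, giving $\alpha^S_f=\alpha q+\int_0^\infty h(qu)\,\mu(du)$ and $\alpha^S_m=\alpha(1-q)+\int_0^\infty h((1-q)u)\,\mu(du)$. A more delicate second-moment computation, again using the conditional moments of the thinning together with the finite first moment \eqref{fin-mu} (which forces $v_N\E(D^N)=O(1)$ and hence controls all error terms), shows that $v_NN\,\E(h_\bullet h_\star((L^{f,N},L^{m,N})/N))$ converges to $\int h_\bullet h_\star\,\mathrm{d}\nu_S$ with no additional Gaussian term, so that $\sigma^S_f=\sigma^S_m=\sigma^S_{fm}=0$. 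This last point — showing that the accumulated binomial fluctuations produce no Brownian component, uniformly across the bulk regime and the heavy tail of $D^N$ — is the main obstacle.

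Finally, Assumption~\textbf{C} reduces to $\int_{\RR_+^2}(u_1+u_2)\,\mathrm{d}\nu_S=\int_0^\infty u\,\mu(du)<\infty$, which is precisely \eqref{fin-mu}, so Theorem~\ref{main} applies and the limit is the unique strong solution of \eqref{EDS-limit} with the parameters found above. To reach \eqref{App1} I would simplify that system: all Brownian coefficients being zero, the terms $B^f,B^m,B^1,B^2$ disappear, and since $\nu_f=\nu_m=0$ there are no clonal jumps. In the sexual part, combining the compensated and correction integrals gives $\int\mathbf 1_{\theta\le g}\,h(u_1)\,\widetilde N^S+\int\mathbf 1_{\theta\le g}\,(u_1-h(u_1))\,N^S=\int\mathbf 1_{\theta\le g}\,u_1\,N^S-\int_0^t g(F_s,M_s)(\int h(u_1)\,\mathrm{d}\nu_S)\,ds$, which is legitimate under \eqref{fin-mu}; the subtracted compensator exactly cancels the contribution $\int_0^\infty h(qu)\,\mu(du)$ sitting inside $\alpha^S_f$, leaving the drift $\alpha q\,(F_s\wedge M_s)$. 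Since $\nu_S$ is carried by the ray $\{(qu,(1-q)u):u\ge0\}$, I would finally push $N^S$ forward to a Poisson measure $N$ on $[0,\infty)^3$ with intensity $ds\,d\theta\,\mu(du)$, replacing $u_1$ by $qu$ in the female equation and $u_2$ by $(1-q)u$ in the male equation, with the same $N$. This turns the system into exactly \eqref{App1}.
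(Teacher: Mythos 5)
Your overall strategy is the paper's: specialize Theorem \ref{main} by checking Assumptions \textbf{A}, \textbf{B}, \textbf{C}, then simplify \eqref{EDS-limit}. Your verification of \textbf{(A1)} and \textbf{B}, the reduction of \textbf{C} to \eqref{fin-mu}, and your endgame (merging the compensated and uncompensated Poisson integrals, cancelling $\int h(qu)\,\mu(du)$ against $\alpha_f^S$, and pushing $\nu_S$, carried by the ray $\{(qu,(1-q)u):u\ge 0\}$, forward to a single Poisson measure $N$ with intensity $ds\,d\theta\,\mu(du)$ common to both equations) are all correct and match what the paper does implicitly. The problem is \textbf{(A2)}, which is exactly where the paper concentrates its effort, and where your argument has a genuine gap. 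First, your key quantitative claim is false: Assumption \textbf{D} together with \eqref{fin-mu} does \emph{not} force $v_N\E(D^N)=O(1)$. Assumption \textbf{D} only constrains expectations of \emph{bounded} test functions of $D^N/N$, and \eqref{fin-mu} is a condition on the limit measure $\mu$, not on the laws of $D^N$. For instance, adding to the model distribution an atom $\PP\big(D^N=N2^N\big)=\tfrac{1}{N^2 v_N}$ changes every limit in Assumption \textbf{D} by at most $O(1/N)$ (bounded test functions), yet makes $v_N\E(D^N)\ge 2^N/N\to\infty$. Second, the step you yourself flag as ``the main obstacle'' --- that the accumulated binomial fluctuations produce no Brownian component, i.e.\ $\sigma^S_f=\sigma^S_m=\sigma^S_{fm}=0$, uniformly over the bulk and the heavy tail of $D^N$ --- is asserted rather than proved, and the tool you propose for it is precisely the false bound above. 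Your route can be repaired, but only with a different mechanism: truncate at $D^N\le KN$ and use that $\mu[K,\infty)\to 0$ as $K\to\infty$ (a consequence of \eqref{fin-mu}) together with boundedness of $h$ and $\phi$ to kill the far tail; on the bulk, use the conditional bound $\E(|L^{f,N}-qD^N|\mid D^N)\le\sqrt{q(1-q)D^N}$, note that on $\{D^N\le \delta N\}$ the drift error vanishes exactly by the conditional mean, and that the conditional-variance contribution to second moments is at most $\tfrac{q(1-q)}{N}\,v_NN\,\E\big(h(D^N/N)\big)=O(1/N)$. None of this uses $v_N\E(D^N)$.

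It is worth seeing how the paper sidesteps all of this. Its key lemma conditions on $D^N$ and computes Laplace functionals exactly:
$$\E\Big(e^{-kL^{f,N}/N-\ell L^{m,N}/N}\,\Big|\,D^N\Big)=\big(qe^{-k/N}+(1-q)e^{-\ell/N}\big)^{D^N}=e^{-a^N_{k,\ell}D^N/N},\qquad a^N_{k,\ell}\to kq+\ell(1-q),$$
which reduces the whole two-dimensional thinning problem to \emph{bounded} one-dimensional test functions $f_a(D^N/N)=1-e^{-aD^N/N}$, to which Assumption \textbf{D} applies directly; a Stone--Weierstrass argument (as in \cite{BCM}) then upgrades the convergence for the family $\{f_{a_{k,\ell}}\}$ to full Assumption \textbf{(A2)}. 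In that approach the identification $\sigma^S_f=\sigma^S_m=\sigma^S_{fm}=0$ costs nothing: the limit $a_{k,\ell}\,\alpha+\int_0^\infty(1-e^{-a_{k,\ell}u})\,\mu(du)$ depends on $(k,\ell)$ only through the linear form $a_{k,\ell}$, so comparing with the general expansion \eqref{conve3} leaves no room for the quadratic terms $k^2,\ell^2,k\ell$ that a Gaussian part would produce. So the conclusion you assert is true, but the concentration estimate you defer is genuinely needed in your approach and must be carried out with truncation arguments rather than with the (unavailable) first-moment bound on $D^N$.
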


%\marginpar{Vincent: \`a revoir, Jaime qu'en penses tu ?}
 %Note that the integrability assumption could be relaxed to $\int_{0}^{\infty} (1\wedge (x\ell(x))\mu(dx)<\infty$, where $\ell $ varies slowly.   

\me To apply Theorem \ref{main}, the technical point  to check is Assumption {\bf (A2)}.
It is deduced from the next  lemma.

\begin{lem} For any integers $(k,\ell)\ne(0,0)$,
$$Nv_N\E\left(1-\exp\left(-k\frac{L^{f,N}}{N}-\ell  \frac{L^{m,N}}{N} \right)\right)\stackrel{N\rightarrow \infty}{\longrightarrow} a_{k,\ell} \, \alpha
+  \int_{0}^{\infty}(1-e^{-a_{k,\ell}u})\mu(du),$$
where $a_{k,\ell}=kq+\ell (1-q).$
\end{lem}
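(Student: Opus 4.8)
The plan is to compute the Laplace-type functional directly from the definition of $(L^{f,N},L^{m,N})$ as a thinned sum, and then invoke Assumption \textbf{D}. Recall that $(L^{f,N},L^{m,N})\stackrel{d}{=}\sum_{j=1}^{D^N}(\mathcal B_j,1-\mathcal B_j)$, where the $\mathcal B_j$ are i.i.d.\ Bernoulli($q$) independent of $D^N$. First I would observe that each summand contributes exactly one offspring, which is either female (weight $k/N$ in the exponent) or male (weight $\ell/N$), so that
\[
k\frac{L^{f,N}}{N}+\ell\frac{L^{m,N}}{N}=\frac{1}{N}\sum_{j=1}^{D^N}\bigl(k\mathcal B_j+\ell(1-\mathcal B_j)\bigr).
\]
Conditioning on $D^N$ and using independence of the $\mathcal B_j$, the conditional Laplace transform factorizes:
\[
\E\Bigl(e^{-k L^{f,N}/N-\ell L^{m,N}/N}\ \big|\ D^N\Bigr)=\prod_{j=1}^{D^N}\E\bigl(e^{-(k\mathcal B_j+\ell(1-\mathcal B_j))/N}\bigr)=\Bigl(q\,e^{-k/N}+(1-q)\,e^{-\ell/N}\Bigr)^{D^N}.
\]

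Next I would set $\psi_N:=q\,e^{-k/N}+(1-q)\,e^{-\ell/N}$ and analyze the quantity
\[
Nv_N\,\E\bigl(1-\psi_N^{D^N}\bigr).
\]
The strategy is to compare $1-\psi_N^{D^N}$ with the function $\phi_N(u):=1-e^{-a_{k,\ell}u}$ evaluated at $u=D^N/N$, where $a_{k,\ell}=kq+\ell(1-q)$. The key Taylor expansion is that $\psi_N=1-a_{k,\ell}/N+O(1/N^2)$, hence $\log\psi_N=-a_{k,\ell}/N+O(1/N^2)$ and therefore $\psi_N^{D^N}=\exp\bigl(D^N\log\psi_N\bigr)\approx e^{-a_{k,\ell}D^N/N}$. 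This identifies $1-\psi_N^{D^N}$ with $1-e^{-a_{k,\ell}D^N/N}+(\text{error})$, up to a correction coming from the $O(1/N^2)$ term in $\log\psi_N$ multiplied by $D^N$. The function $\phi(u)=1-e^{-a_{k,\ell}u}$ is continuous and bounded but \emph{not} null near $0$ (it behaves like $a_{k,\ell}u$ there), so it is not directly an admissible test function in Assumption \textbf{D}. I would therefore split it, writing $\phi(u)=a_{k,\ell}\,h(u)+\bigl(\phi(u)-a_{k,\ell}h(u)\bigr)$: the first piece is handled by the $\alpha$-line of Assumption \textbf{D} (the expectation of $v_N N\,h(D^N/N)$ converges to $\alpha+\int h\,d\mu$), while the remainder $\phi(u)-a_{k,\ell}h(u)$ vanishes in a neighborhood of $0$ and is continuous and bounded, so the second line of Assumption \textbf{D} applies, giving $\int(\phi(u)-a_{k,\ell}h(u))\,d\mu$. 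Recombining, the $h$-terms reassemble exactly to yield
\[
a_{k,\ell}\,\alpha+\int_0^\infty\bigl(1-e^{-a_{k,\ell}u}\bigr)\mu(du),
\]
which is the claimed limit.

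The main obstacle I anticipate is controlling the error between $Nv_N\,\E\bigl(1-\psi_N^{D^N}\bigr)$ and $Nv_N\,\E\bigl(\phi(D^N/N)\bigr)$ uniformly, rather than merely pointwise in $D^N$. The difficulty is that the $O(1/N^2)$ discrepancy in $\log\psi_N$ gets multiplied by $D^N$, which can be of order $N$ (indeed the whole point is that $D^N/N$ has a nondegenerate limiting distribution under $\mu$), so the naive bound $D^N\cdot O(1/N^2)=O(D^N/N^2)$ must be shown to contribute negligibly after multiplying by the prefactor $Nv_N$. The clean way to handle this is to bound $|e^{-x}-e^{-y}|\le|x-y|$ with $x=a_{k,\ell}D^N/N$ and $y=-D^N\log\psi_N$, reducing the error to $Nv_N\,\E\bigl(D^N\,|a_{k,\ell}/N+\log\psi_N|\bigr)=O(v_N\,\E(D^N))$; the moment bound $v_N\E(D^N)=O(1)$, which follows from the finiteness $\int u\,\mu(du)<\infty$ together with the scaling in Assumption \textbf{D} (taking $\phi$ an increasing truncation of the identity and using $v_N N\,\E(h(D^N/N))\to\alpha+\int h\,d\mu$), then forces this error term to vanish as $N\to\infty$ after a final $1/N$ factor is extracted. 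Making this moment control rigorous—in particular verifying that $v_N\,\E(D^N)$ stays bounded—is the technical heart of the argument.
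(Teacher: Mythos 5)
Your conditioning step and the idea of splitting $\phi=a_{k,\ell}h+(\phi-a_{k,\ell}h)$ are indeed how the paper's proof begins, but there are two genuine gaps. First, your claim that the remainder $\phi(u)-a_{k,\ell}h(u)$ ``vanishes in a neighborhood of $0$'' is false: for $u\in[0,1]$ one has $h(u)=u$, so $\phi(u)-a_{k,\ell}h(u)=1-e^{-a_{k,\ell}u}-a_{k,\ell}u\sim -\tfrac12 a_{k,\ell}^2u^2$, which is $O(u^2)$ but not identically zero near $0$. Consequently the second line of Assumption \textbf{D} cannot be applied to it directly. The missing idea --- which is the bulk of the paper's proof --- is a cutoff argument: introduce $\varphi_\varepsilon$ equal to $0$ on $[0,\varepsilon]$ and to $1$ on $[2\varepsilon,\infty)$, decompose $f_a-ah=\varphi_\varepsilon(f_a-ah)+(1-\varphi_\varepsilon)(f_a-ah)$, apply Assumption \textbf{D} to the first piece, and control the second by $|(1-\varphi_\varepsilon)(f_a-ah)|(u)\le C\varepsilon\, h(u)$, so that its contribution is at most $C\varepsilon\,\sup_N Nv_N\E\left(h(D^N/N)\right)$, which is killed by letting $\varepsilon\to0$ at the end.

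Second, your control of the error between $\psi_N^{D^N}$ and $e^{-a_{k,\ell}D^N/N}$ relies on the moment bound $v_N\E(D^N)=O(1)$, and this does \emph{not} follow from Assumption \textbf{D} together with $\int_0^\infty u\,\mu(du)<\infty$. Indeed, modify the paper's example of $D^N$ by adding an atom at the value $N^2v_N$ with probability $1/(N^2v_N)$: since every test function appearing in Assumption \textbf{D} (including $h$) is bounded, the extra contribution to each of the scaled expectations is $O(1/N)$, so Assumption \textbf{D} still holds with the same $(\alpha,\mu)$, yet $v_N\E(D^N)\ge v_N\to\infty$; since $v_N$ may grow arbitrarily fast compared to $N$, even the weaker requirement $v_N\E(D^N)=o(N)$ fails. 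Assumption \textbf{D} only constrains bounded functionals of $D^N/N$ and gives no uniform integrability. The paper circumvents moments of $D^N$ entirely: it keeps the \emph{exact} exponent $a_{k,\ell}^N=-N\log\left(qe^{-k/N}+(1-q)e^{-\ell/N}\right)$, so that the quantity of interest equals exactly $Nv_N\E\left(f_{a_{k,\ell}^N}(D^N/N)\right)$ with $f_a(x)=1-e^{-ax}$, and then transfers $a_{k,\ell}^N\to a_{k,\ell}$ inside the cutoff decomposition: near zero, $|f_{a^N_{k,\ell}}-f_{a_{k,\ell}}|(u)\le |a^N_{k,\ell}-a_{k,\ell}|\,h(u)$ with $Nv_N\E\left(h(D^N/N)\right)$ bounded, while on $[\varepsilon,\infty)$ one uses the uniform convergence of $f_{a^N_{k,\ell}}-a^N_{k,\ell}h$ to $f_{a_{k,\ell}}-a_{k,\ell}h$ together with the boundedness of $Nv_N\E\left(\varphi_\varepsilon(D^N/N)\right)$, both consequences of Assumption \textbf{D}. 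You would need to restructure your error analysis along these lines for the argument to close.
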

\begin{proof}
By independence of the random variables $\mathcal B_j$ and conditioning by $D^N$,
$$\E\left(1-e^{-kL^{f,N}/N-\ell  L^{m,N}/N }\right)=1-\E\left(\left[qe^{ -k/N}+(1-q)e^{-\ell/N}\right]^{D^N}\right)=\E\left(f_{a_{k,\ell}^N}(D^N/N)\right),$$
where $f_{a}(x)=1-\exp(-ax)$  and
$$a_{k,\ell}^N= - N\log\left(qe^{-k/N}+ (1-q )e^{-\ell/N} \right). $$ 
Letting $N\rightarrow \infty$ and noticing that $a_{k,\ell}^N\rightarrow a_{k,\ell}>0$, we prove that $Nv_N\E\left(f_{a_{k,\ell}}(D^N/N)\right)\rightarrow
a_{k,\ell}  \alpha
+  \int_{0}^{\infty} f_{a_{k,\ell}}\, d\mu$
  by Assumption {\bf D}  and  conclude.
More precisely,  let us use a family of non-negative continuous bounded functions $\varphi_{\varepsilon}: [0,\infty)\rightarrow [0,1]$, which are equal to zero in 
$[0, \varepsilon]$ and equal to $1$ in $ [2\varepsilon, \infty)$.   The decomposition
$f_a=ah+(\varphi_\varepsilon+1-\varphi_\varepsilon)(f_a-ah)$ yields
\ben
Nv_N \E\left(f_{a_{k,\ell}^N}(D^N/N)\right)&=&a_{k,\ell}^N.Nv_N\E(h(D^N/N))+Nv_N\E\left(\varphi_{\varepsilon}(f_{a_{k,\ell}^N}-a_{k,\ell}^Nh)(D^N/N)\right)\\
&&\qquad \qquad \quad +Nv_N\E\left((1-\varphi_{\varepsilon})(f_{a_{k,\ell}^N}-a_{k,\ell}^Nh)(D^N/N)\right).
\een
By  Assumption {\bf D}, the first term converges to $a_{k,\ell}(\alpha+\int h \, d\mu)$ as $N$ tends to infinity.\\
The last term vanishes as $\varepsilon$ tends to $0$. To see that, we use  that there exists $C>0$ such that
 for $\varepsilon$ small enough and $a$ in a bounded set,  $\vert (1-\varphi_{\varepsilon})( f_{a}-ah)\vert (x) \leq C \varepsilon h(x)$ for any
  $x\geq 0$ and
$$v_NN\big\vert \E\left((1-\varphi_{\varepsilon})(f_{a}-ah)(D^N/N)\right)\big\vert  \leq C \varepsilon v_NN\E(h(D^N/N)),$$
while $v_NN\E(h(D^N/N))$ is bounded by Assumption {\bf D}.\\
The facts that the sequence of functions $(f_{a_{k,\ell}^N}-a_{k,\ell}^Nh)(x)-(f_{a_{k,\ell}}-a_{k,\ell}h)(x)$
tends uniformly to $0$ on the interval $[\varepsilon,\infty)$ as $N$ tends to infinity and 
that $\left(Nv_N\E\left(\varphi_{\varepsilon}(D^N/N)\right)\right)_N$ is bounded by the last part of Assumption {\bf D}
ensures that 
$$Nv_N\left\{\E\left(\varphi_{\varepsilon}(f_{a_{k,\ell}^N}-a_{k,\ell}^Nh)(D^N/N)\right)-
\E\left(\varphi_{\varepsilon}(f_{a_{k,\ell}}-a_{k,\ell}h)(D^N/N)\right)\right\}\stackrel{N\rightarrow\infty}{\longrightarrow}0,$$
 for any $\varepsilon>0$. We    conclude using the convergence of $Nv_N
\E\left(\varphi_{\varepsilon}(f_{a_{k,\ell}}-a_{k,\ell}h)(D^N/N)\right)$ to 
$\,\int \varphi_{\varepsilon} (f_{a_{k,\ell}}-a_{k,\ell}h) \,d\mu\,$ which also comes from
Assumption {\bf D}.
\end{proof}

\begin{proof}[Proof of Theorem \ref{Appliuno}]
The previous lemma ensures via an approximation argument relying on Stone-Weierstrass local theorem  (see \cite{BCM} for details), that 
%\marginpar{Vincent : idem, a changer pour les termes diagonaux}
 \be
&&  \lim_{N\to \infty }  v_NN\,\E\Big(h\Big(L^{\bullet,N}/N\Big)\Big)= \alpha_{\bullet}^S; \nonumber \\
&& \lim_{N\to \infty }  v_NN \, \E\Big(h_{\bullet}h_{\star} \Big((L^{f,N},  L^{m,N})/N\Big) \Big)=  (\sigma_{\bullet,\star}^S)^2 + \int_{\R_{+}^2}  h_{\bullet}h_{\star}(u_1,u_2)  \nu_{S}(du_1, du_2)\nonumber \\
&&
  \lim_{N\to \infty }    v_NN \, \E\Big(\phi\Big((L^{f,N}, L^{m,N})/N\Big)\Big)  =  \int_{[0,\infty)^2} \phi(u_{1}, u_{2}) \nu_{S}(du_{1}, du_{2}), \nonumber
\ee 
where  we recall that  $h_{f}(u_1,u_2)=h(u_1)$ and  $h_{m}(u_1,u_2)=h(u_2)$ and
$\phi$ is continuous bounded and null in neighborhood of $0$ and  where we set
\be \alpha_{f}^S&=&\alpha q+\int_0^{\infty} h(qu)\mu(du),\quad  \alpha_{m}^S=\alpha(1-q)+\int_0^{\infty} h((1-q)u)\mu(du), \nonumber \\
 \nu^S(A)&=&\int_{0}^{\infty} 1_{(qu,(1-q)u)\in A} \, \mu(du).  \nonumber
\ee

Assumption {\bf (A1)} is obviously satisfied :
\be
&&  \lim_{N\to \infty }  v_NN\,\E(h(\mathcal E^{\bullet,N}/N
))= \lim_{N\to \infty }  -v_NN\PP(\mathcal E^{\bullet,N}=-1)/N
=  -p_\bullet; \nonumber \\
&& \lim_{N\to \infty }  v_NN\,\E(h^2(\mathcal E^{\bullet,N}/N))=
  \lim_{N\to \infty }  v_NN \,\PP(\mathcal E^{\bullet,N}=-1)/N^2
=0; \nonumber \\ 
&&
 \lim_{N\to \infty }    v_NN \, \E({\phi}(\mathcal E^{\bullet,N}/N
))  =  0. \nonumber
\ee

\me Assumption {\bf B} is  guaranteed by our choice of mating function $x\wedge y$ and Assumption {\bf C}  comes from \eqref{fin-mu}. We can apply Theorem 2.1 and conclude.
%Applying our theorems ends the proof.
%for each $k,i\geq 1$, $\mathcal E_{k,i}^{f,N}$ (resp. $\mathcal E^{m,N}_{k,i}$) is distributed as a Bernoulli random variable with parameter $1-p_f/v_N$ (resp. $1-p_m/v_N$)
\end{proof}

\subsubsection{Replacement of couples}

We assume that for each $N$,  $\mathcal E^{\bullet,N}=0$. Besides, the  reproduction random variables
$L^{f,N}$ and $L^{m,N}$ are independent  random variables taking values in $\{-1,0,1,\ldots\}$ and the marginal laws satisfy the following scaling assumption.

 \me {\bf Assumption E.} \emph{We consider  two triplets $(\alpha_{\bullet},\sigma_{\bullet}, \nu_{\bullet})$  for $\bullet \in \{f,m\}$ with the  conditions}
$$\alpha_{\bullet} \in \R; \qquad  \sigma_{\bullet} \geq 0; \qquad \int_0^{\infty} u\nu_{\bullet}(du) <\infty.$$
\emph{We consider  also a truncation function $h$ and a  non-negative sequence $v_N$ going to $+\infty$.  Let us assume that for $\bullet\in\{f,m\}$,
 \be
&&  \lim_{N\to \infty }  v_NN\,\E(h(L^{\bullet,N}/N
))= \alpha_{\bullet}; \quad
  \lim_{N\to \infty }  v_NN \, \E(h^2(L^{\bullet,N}/N
))=\sigma_{\bullet}+\int_{0}^{\infty} h^2(u) \, \nu_{\bullet}(du); \nonumber \\
&&
\qquad \qquad \qquad  \lim_{N\to \infty }    v_NN \, \E(\varphi(L^{\bullet,N}/N
))  =  \int_{0}^{\infty} \varphi(u) \, \nu_{\bullet}(du).  \nonumber  \label{condtripGW}
\ee
for any $\varphi$ continuous bounded and null in a neighborhood of $0$.}

\me We know from the historical study of   Galton-Watson processes that for any  such triplet $(\alpha, \sigma, \nu)$, there exist $(v_{N})_{N}$ and $(L^N)_{N}$ satisfying Assumption {\bf E}, see \cite{Kallenberg,  JS, BS}.
% and similarly\be
%&&  \lim_{N\to \infty }  v_NN\,\E(h(L^{m,N}/N
%))= \alpha_m; \qquad
 % \lim_{N\to \infty }  v_NN \, \E(h^2(L^{m,N}/N
%))=\beta_m; \nonumber \\ 
%&&
%\qquad \qquad \qquad  \lim_{N\to \infty }    v_NN \, \E(g(L^{m,N}/N
%))  =  \int_{0}^{\infty} g \, \nu_m. \label{condtripGW}
%\ee}

\me We consider for each $N\geq 1$ the following Markov chain where every pair dies after reproduction and leaves  independently  a random  number of males and females, independent from each other and distributed as  $(L^{f,N},L^{m,N})$.
It is defined by 
\be
F^N_{n+1} &=& F^N_n+  \sum_{p=1}^{M^N_n\wedge  F^N_n} L^{f,N}_{n,p}, \nonumber \\
M^N_{n+1} &=& M^N_n+ \sum_{p=1}^{M^N_n \wedge F^N_n} L_{n,p}^{m,N} , \nonumber 
\ee
where  $(L^{f,N}_{n,p}, L_{n,p}^{m,N} : n\geq 0, p\geq 1)$ are independent and  distributed as  $(L^{f,N},L^{m,N})$. Writing $(L^{f,N},L^{m,N})=-(1,1)+(L_+^{f,N},L_+^{m,N})$, it means that the pairs disappear in the next generation and are replaced by a number of males and females given by $L_+^{\bullet,N} \in \{0,1, \ldots\}$.  \\
Assumption {\bf E}  and the independence of $L^{f,N}$ and $L^{m,N}$   make Assumptions {\bf A}   and {\bf C} easy to check, while Assumption  {\bf B} is again a direct consequence of the choice of $g_N$.
We obtain

\begin{thm}
 Under the  weak convergence of $(Z_0^N/N)_{N}$ to $Z_0=(F_0,M_0)\in [0,\infty)^2$ and Assumption {\bf E}, the sequence of processes $(Z^N_{[v_N.]}/N)_N$ converges in law in $\mathbb D([0,\infty), [0,\infty)^2)$  to the unique strong  solution $Z=(F,M)$ of the stochastic differential equation  
\be
F_t &=& F_0 + \alpha_f \int_0^{t} F_s\wedge M_s \, ds +   \sigma_f \int_0^{t}  \sqrt{ F_s \wedge M_s} dB^f_s  \nonumber  \\
&&  +\int_0^{t} \int_{[0,\infty)^2} {\bf 1}_{\theta\leq F_{s-} \wedge M_{s-}}h(u) \widetilde N^f(ds, du, d\theta)  
+ \int_0^t\int_{[0,\infty)^2}  {\bf 1}_{\theta\leq F_{s-} \wedge M_{s-}}(u - h(u))  N^f(ds, du, d\theta),  \nonumber \\
M_t &=& M_0+\alpha_m \int_0^{t} F_s\wedge M_s \, ds +  \sigma_m \int_0^{t}  \sqrt{F_s\wedge M_s} dB^m_s  \nonumber  \\
&&+\int_0^{t} \int_{[0,\infty)^2} {\bf 1}_{\theta\leq F_{s-} \wedge M_{s-}}h(u) \widetilde N^m(ds, du, d\theta) + \int_0^t\int_{[0,\infty)^2}  {\bf 1}_{\theta\leq F_{s-} \wedge M_{s-}}(u - h(u))  N^m(ds, du, d\theta),  \nonumber 
\ee
%before the explosion time $T_e=\lim_{n\rightarrow \infty}\inf\{ t \geq 0 : F_t \geq n \ \text{or } M_t\geq n\}$, 
where $B^f,B^m, N^f, N^m$ are independent, $B^f$ and $B^m$ are  Brownian motions, $N^f$ and $N^m$ are  Poisson point measures on $\mathbb{R}_{+}^3$, respectively   with intensity measures $ds du \nu_f(d\theta)$ and $ds du \nu_m(d\theta)$.
\end{thm}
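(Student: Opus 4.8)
The plan is to derive this statement as a direct application of Theorem~\ref{main}, so the whole task reduces to checking Assumptions \textbf{A}, \textbf{B} and \textbf{C} for this particular chain and then reading off the limiting coefficients. First I would dispose of the easy ingredients. Since $\mathcal E^{\bullet,N}=0$ for every $N$, one has $h(\mathcal E^{\bullet,N}/N)=0$ and $\phi(\mathcal E^{\bullet,N}/N)=0$ (as $\phi$ vanishes near $0$), so Assumption \textbf{(A1)} holds with all the clonal quantities equal to zero; consequently the purely clonal drift, diffusion and jump terms of \eqref{EDS-limit} disappear and the entire limit is carried by the sexual reproduction $(L^{f,N},L^{m,N})$. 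Assumption \textbf{B} is immediate for $g_N=g=y\wedge z$: \textbf{(B1)} holds trivially (the two functions are equal), \textbf{(B2)} holds with $a=1$, $b=0$, \textbf{(B3)} holds because $y\wedge z$ is $1$-Lipschitz and $y\wedge 0=0\wedge z=0$, and \textbf{(B4)} holds since $\inf\{y\wedge z:\delta\le y,z\le n\}=\delta>0$.

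The heart of the proof is Assumption \textbf{(A2)}, and this is exactly where the independence of $L^{f,N}$ and $L^{m,N}$ is used. I would follow the route of the preceding lemma and work with bivariate exponential functionals. Setting $A_N=\E(1-e^{-kL^{f,N}/N})$ and $B_N=\E(1-e^{-\ell L^{m,N}/N})$, independence gives
$$
v_NN\,\E\!\left(1-e^{-kL^{f,N}/N-\ell L^{m,N}/N}\right)=v_NN\,A_N+v_NN\,B_N-\big(v_NN\,A_N\big)B_N .
$$
By the marginal scaling in Assumption \textbf{E}, each of $v_NN A_N$ and $v_NN B_N$ converges to the corresponding one-dimensional L\'evy--Khintchine exponent while $A_N,B_N\to0$; hence the cross term vanishes and the limiting bivariate exponent is the \emph{sum} of a function of $k$ and a function of $\ell$. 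This additive decoupling forces the Gaussian cross-covariance $\sigma^S_{fm}$ to vanish and the jump measure $\nu_S$ to be supported on the two coordinate axes, with marginals $\nu_f$ and $\nu_m$, while the drift and diagonal diffusion coefficients are simply those read off Assumption \textbf{E} (namely $\alpha^S_\bullet=\alpha_\bullet$, with diagonal Gaussian coefficient $\sigma_\bullet$). Passing from these exponential test functions to the functionals $h_\bullet h_\star$ and to arbitrary $\phi$ required in \textbf{(A2)} is then a density argument based on the local Stone--Weierstrass theorem, exactly as invoked in \cite{BCM} in the previous application.

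It remains to check Assumption \textbf{C}. Since the clonal measures vanish and $\nu_S$ lives on the axes with marginals $\nu_f,\nu_m$, the measure $\nu$ of Assumption \textbf{C} satisfies $\int_{\R_+^2}(u_1+u_2)\,\nu(du_1,du_2)=\int_0^\infty u\,\nu_f(du)+\int_0^\infty u\,\nu_m(du)<\infty$ by Assumption \textbf{E}. Theorem~\ref{main} then applies, and I would specialize its conclusion. With $\sigma^S_{fm}=0$ the diffusion matrix in \eqref{EDS-limit} becomes diagonal, so $B^1,B^2$ reduce to $\sigma_f,\sigma_m$ times two independent Brownian motions; and since $\nu_S$ charges only the axes, the two-dimensional Poisson measure $N^S$ splits into two independent one-dimensional Poisson measures $N^f,N^m$ with intensities $ds\,d\theta\,\nu_f$ and $ds\,d\theta\,\nu_m$, acting separately on the $F$- and $M$-equations (on the $u_1$-axis one has $h(u_2)=h(0)=0$ and $u_2-h(u_2)=0$, and symmetrically). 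Substituting these, together with $\alpha^S_\bullet=\alpha_\bullet$ and $g=F\wedge M$, into the system of Theorem~\ref{main} yields exactly the stated SDE, and both the convergence in law in $\mathbb{D}([0,\infty),[0,\infty)^2)$ and the pathwise (hence strong) uniqueness are inherited directly from Theorem~\ref{main}.

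The step I expect to be the main obstacle is the identification in Assumption \textbf{(A2)}, namely proving that the Gaussian cross term $\sigma^S_{fm}$ and the off-axis part of $\nu_S$ both vanish; everything hinges on independence through the factorization of the bivariate exponential functional, and the only delicate points are controlling the cross term $v_NN A_N B_N$ and then upgrading from exponentials to the full class of test functions via Stone--Weierstrass. By contrast Assumptions \textbf{(A1)}, \textbf{B} and \textbf{C} are routine bookkeeping.
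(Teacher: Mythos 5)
Your proposal is correct, and all the routine checks (\textbf{(A1)} trivial since $\mathcal E^{\bullet,N}=0$, \textbf{B} immediate for $y\wedge z$, \textbf{C} from $\int u\,\nu_\bullet(du)<\infty$, and the final specialization of the limiting system) coincide with the paper's. Where you genuinely diverge is in the verification of \textbf{(A2)}. You run the machinery of the \emph{first} application: bivariate Laplace functionals $1-e^{-ku_1-\ell u_2}$, the algebraic identity $A_N+B_N-A_NB_N$, additivity of the limiting exponent (which, since both $k\ell(\sigma^S_{fm})^2$ and $\int f_k(u_1)f_\ell(u_2)\,\nu_S(du_1,du_2)$ are nonnegative, forces $\sigma^S_{fm}=0$ and $\nu_S$ supported on the axes), and then Stone--Weierstrass to upgrade to general test functions. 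The paper instead exploits independence \emph{directly at the level of the test functions required by} \textbf{(A2)}: since $\E\big(h(L^{f,N}/N)h(L^{m,N}/N)\big)=\E\big(h(L^{f,N}/N)\big)\E\big(h(L^{m,N}/N)\big)$ and each factor is $O(1/(v_NN))$ by Assumption \textbf{E}, the product is $O\big((v_NN)^{-2}\big)$ and hence negligible after multiplication by $v_NN$; the same factorization kills the product terms in test functions of the form $\phi_{f,1}(u_1)+\phi_{m,1}(u_2)+\sum_k\phi_{f,k}(u_1)\phi_{m,k}(u_2)$, yielding the axes-supported $\nu_S$ and $\sigma^S_{fm}=0$ in two lines. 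Your route is more systematic (it is exactly what one must do in the first application, where $L^{f,N}$ and $L^{m,N}$ are \emph{not} independent because they are coupled through $D^N$), but it is longer here: it needs the intermediate fact that $v_NN\,A_N$ converges to the one-dimensional L\'evy--Khintchine exponent, which itself requires the decomposition $f_k=kh-\tfrac{k^2}{2}h^2+R_k$ and the extension of Assumption \textbf{E} to remainders that are $o(u^2)$ near zero. The paper's route buys brevity by using independence where it bites hardest; both routes share the final reliance on the local Stone--Weierstrass density argument to cover all admissible $\phi$, so neither is more elementary on that point.
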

The assumption $\int_0^{\infty}  u \, \nu_{\bullet}(du) <\infty$ 
guarantees both non-explosion and pathwise uniqueness. For tightness and identification, we just need
$\int_0^{\infty} (u^2\wedge 1)\, \nu_{\bullet}(du) <\infty$ for $\bullet\in \{f,m\}$,
while $\int_0^{\infty} (u\wedge 1)\, \nu_{\bullet}(du) <\infty$ is sufficient for pathwise uniqueness before the explosion time.
\begin{proof} We have
$$\E\Big(h\Big(L^{f,N}/N\Big)h\Big(L^{m,N}/N\Big)\Big)=\E\Big(h\Big(L^{f,N}/N\Big)\Big)\E\Big(h\Big(L^{m,N}/N\Big)\Big)$$ is of order of magnitude of $(1/v_NN)^2$ so that
$$ \lim_{N\to \infty }  v_NN \, \E\Big(h\Big(L^{f,N}/N\Big)h\Big(L^{m,N}/N\Big) \Big)=  0.$$
Similarly for $\phi(u_1,u_2)=\phi_{f,1}(u_1)+\phi_{m,1}(u_2)+\sum_{k=2}^K \phi_{f,k}(u_1)\phi_{m,k}(u_2)$ and $\phi_{\bullet,  k}$ continuous bounded and equal to zero in a neighborhood of zero, we have 
$$ \lim_{N\to \infty }    v_NN \, \E\Big(\phi\Big((L^{f,N}, L^{m,N})/N\Big)\Big) =\int_0^{+\infty} \phi_{f,1}(u_1)\nu_f(du_1)+\int_0^{+\infty} \phi_{m,1}(u_2)\nu_m(du_2).$$
Thus, Assumption ${\bf A}$ holds with  $\sigma^S_{fm}=0$ and
 $\nu_S(du_1,du_2)=\delta_0(du_1)\nu_m(du_2)+\nu_f(du_1)\delta_0(du_2)$
and applying Theorem \ref{main} yields the result.
\end{proof}

 \section{Proof of the convergence}
 \label{sec:proof-thm}
 
  The proof is organized as follows. First, using \cite{BCM} applied to  a compactified version of the bisexual process $Z^N=(F^N,M^N)$, we  prove  tightness and that the limiting points of $Z^N$ are weak solution of  SDE \eqref{EDS-limit}. 
Second, we prove that pathwise uniqueness  holds for  \eqref{EDS-limit}. This point is new and is the main difficulty  of the paper. It  is  the object of forthcoming Proposition \ref{the:main2}, whose proof is a direct adaptation of the uniqueness result stated and proved in a more convenient setting in Section \ref{sec:uniqueness}.
 
 \subsection{Tightness and identification}
 
Tightness and identification are proved under more general assumptions.
We only  need Assumptions {\bf A} and {\bf (B1)}, {\bf (B2)}. 
 \begin{prop}
\label{prop=tightness}  Suppose  Assumptions {\bf A} and  {\bf (B1)}, {\bf (B2)} hold and suppose the sequence  $(Z_0^N/N)_{N}$ converges 
weakly to $Z_0=(F_0,M_0)\in [0,\infty)^2$. Then, 
the sequence of processes $(Z^N_{[v_N.]}/N)_{N}$ is tight in   $\mathbb D([0,\infty), [0,\infty]^2)$ and the limiting values 
$Z=(F,M)$ are weak solutions of  \eqref{EDS-limit} before the explosion time $\mathbb T_e=\lim_{n\rightarrow \infty}\inf\{ t \geq 0 : F_t \geq n \ \text{or } M_t\geq n\}$.
\end{prop}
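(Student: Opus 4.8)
The plan is to derive the result from the general tightness-and-identification theorem of \cite{BCM}, applied to the $\RR^2$-valued Markov chain $(F^N_n/N, M^N_n/N)_n$ observed at the accelerated time scale $[v_N\cdot]$, after embedding it into the compact space $[0,\infty]^2$. This compactification is forced on us because Assumption \textbf{A} only guarantees $\int(1\wedge u^2)\,\nu_\bullet(du)<\infty$ and $\int 1\wedge(u_1^2+u_2^2)\,\nu_S(du_1,du_2)<\infty$, so the large jumps need not be integrable and the limit may explode; working in the compact space $[0,\infty]^2$ makes the state space tight for free and reduces the problem to controlling the modulus of continuity in $\mathbb{D}$. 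The whole scheme rests on computing the rescaled generator and checking that it converges, locally uniformly, to the generator of the martingale problem attached to \eqref{EDS-limit}.

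Concretely, for a test function $\psi$ of class $C^2$ extending continuously to $[0,\infty]^2$ (e.g.\ depending on $(y,z)$ only through $e^{-y},e^{-z}$, or compactly supported in $[0,\infty)^2$), I would write the discrete generator
\[
G_N\psi(y,z) = v_N\,\E\!\left[\psi\!\left(\tfrac{Z^N_{n+1}}{N}\right)-\psi(y,z)\;\Big|\;\tfrac{Z^N_n}{N}=(y,z)\right],\qquad (y,z)\in(\NN/N)^2,
\]
and exploit the \emph{mutual independence} of the families $\{\mathcal E^{f,N}\}$, $\{\mathcal E^{m,N}\}$, $\{(L^{f,N},L^{m,N})\}$ to factorize the conditional law of the increment into three independent blocks: $Ny$ summands of $\mathcal E^{f,N}/N$, $Nz$ summands of $\mathcal E^{m,N}/N$, and $g_N(Ny,Nz)$ summands of $(L^{f,N},L^{m,N})/N$. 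A second-order Taylor expansion of $\psi$, together with the three limits in \textbf{(A1)} for the asexual blocks and the three limits in \textbf{(A2)} for the sexual block, then produces in the limit exactly the drift $(\alpha_f y+\alpha_f^S g(y,z),\,\alpha_m z+\alpha_m^S g(y,z))$, the diffusion matrix with entries $\sigma_f^2 y+(\sigma_f^S)^2 g$, $(\sigma_{fm}^S)^2 g$, $\sigma_m^2 z+(\sigma_m^S)^2 g$, and the jump part integrated against $y\,\nu_f(du_1)\delta_0(du_2)+z\,\delta_0(du_1)\nu_m(du_2)+g(y,z)\,\nu_S(du_1,du_2)$, the truncation $h$ entering through the compensation of small jumps. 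Here \textbf{(B1)} turns $g_N(Ny,Nz)/N$ into $g(y,z)$ uniformly, and \textbf{(B2)} supplies the linear bound $g\le a(y\wedge z)+b$ that keeps these characteristics bounded on compact subsets of $[0,\infty)^2$, so that $G_N\psi\to L\psi$ locally uniformly.

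For tightness I would invoke the criterion of \cite{BCM}: together with the compact containment gained from the compactification, it suffices to bound the rescaled first- and second-moment characteristics on compacts (delivered by the previous step via \textbf{(A1)}, \textbf{(A2)} and \textbf{(B2)}) and to control the compensator of the large jumps, so that an Aldous--Rebolledo argument excludes oscillations; this yields tightness of $(Z^N_{[v_N\cdot]}/N)_N$ in $\mathbb{D}([0,\infty),[0,\infty]^2)$. To identify the limit I would pass to the limit in the martingale problem: for each admissible $\psi$ the quantity $\psi(Z^N_{[v_N t]}/N)-\psi(Z^N_0/N)-\int_0^t G_N\psi(\cdot)\,ds$ is (up to a negligible discretization term) a martingale, and the locally-uniform convergence $G_N\psi\to L\psi$ combined with tightness shows that any limit point $Z=(F,M)$, stopped below level $n$, solves the martingale problem associated with $L$. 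Letting $n\to\infty$ produces a solution up to the explosion time $\mathbb{T}_e$, and a standard representation theorem for such martingale problems (realizing the continuous and purely-discontinuous martingale parts through independent Brownian motions and Poisson measures, with the matrix relating $(B^1,B^2)$ to $B$ being the Cholesky factor of the sexual diffusion matrix) yields the driving noises of \eqref{EDS-limit} and shows that $Z$ is a weak solution before $\mathbb{T}_e$.

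The delicate points, where I expect the real work to lie, are twofold. First, the \emph{explosion/compactification} bookkeeping: with no first-moment assumption available, the large jumps cannot be integrated, so the convergence and the martingale property are obtained only for the processes stopped at level $n$, and one must check that these stopped martingale problems are consistent and can be pushed to $n\to\infty$ so as to reach $\mathbb{T}_e$ without mass spuriously escaping to $\infty$ beforehand. Second, and more substantively, is the \emph{joint} identification of the coupled sexual characteristics: the block $\{(L^{f,N},L^{m,N})\}$ feeds both coordinates simultaneously, so one must establish the two-dimensional convergence of its generator contribution and correctly recover the off-diagonal diffusion $(\sigma_{fm}^S)^2 g$ and the two-dimensional jump measure $\nu_S$. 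Since \textbf{(A2)} is stated only for the product functionals $h_\bullet h_\star$ and for $\phi$ vanishing near the origin, reaching a general $C^2$ test function $\psi$ requires a Stone--Weierstrass approximation on products of one-variable functionals, precisely of the type used in the proof of the lemma of Section 2; carrying this through while respecting the truncation near zero is the technical heart of the identification.
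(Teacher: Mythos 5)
Your overall skeleton coincides with the paper's (compactify, prove convergence of the rescaled one-step characteristics, apply the general tightness/identification theorem of \cite{BCM}, then transform back to $(F,M)$ up to the explosion time), but there is a genuine gap at the computational heart of the argument, in the sentence claiming that ``a second-order Taylor expansion of $\psi$, together with the three limits in {\bf (A1)}\dots and {\bf (A2)}\dots, produces in the limit exactly the drift\dots''. Assumptions {\bf (A1)}--{\bf (A2)} give asymptotics of $v_NN\,\E(\cdot)$ for a \emph{single} copy of $\mathcal E^{\bullet,N}/N$ or of $(L^{f,N},L^{m,N})/N$, whereas the one-step increment of $Z^N/N$ conditioned on $(y,z)$ is a sum of $[Ny]+[Nz]+g_N(Ny,Nz)=O(N)$ such copies. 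A Taylor expansion of a generic $C^2$ function evaluated at this sum does not reduce to single-copy quantities: truncated moments of an $O(N)$-fold convolution are not sums of truncated single-copy moments, and the Taylor remainder is not negligible on the event that one summand is macroscopic. Bridging single-copy asymptotics and the law of the sum requires an extra device --- either a Lindeberg-type telescoping over the summands, or, as the paper does, the choice of \emph{monomial test functions in the exponential coordinates}: for $H_{i,j}(u)=u_1^iu_2^j$ applied to the increment of $X^N=(e^{-F^N/N},e^{-M^N/N})$, the conditional expectation expands by the binomial theorem into joint Laplace transforms $A_{k,\ell}^N(x)$, which by independence factorize \emph{exactly} into the product $a_1^Na_2^Na_3^N$ of powers of single-copy Laplace transforms, see \eqref{techno}. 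This exact factorization is what converts {\bf (A1)}--{\bf (A2)} into the uniform convergence of Lemma \ref{CV-charac}; nothing in your proposal plays this role, and your closing discussion of ``delicate points'' does not flag the issue, which suggests it has been overlooked rather than deferred.

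A second, related defect concerns the mode of convergence you invoke. You ask only for locally uniform convergence of $G_N\psi\to L\psi$ on compact subsets of $[0,\infty)^2$, but hypothesis (H1.3) of \cite{BCM} (recalled in Appendix A) requires convergence \emph{uniform over the whole state space}, i.e.\ including the region where $y$ or $z$ (hence possibly $g_N(Ny,Nz)$) is unboundedly large compared to $v_N$. It is precisely there that {\bf (B2)} is really used in the paper, in combination with the exponential weights $e^{-iy-jz}$ coming from the compactified coordinates, to make both the prelimit characteristics and their limits uniformly small (cf.\ the separate treatment of the region $g_N(Ny,Nz)/N\geq v_N$ in the proof of Lemma \ref{CV-charac}); your use of {\bf (B2)} merely ``on compacts'' does not deliver this, and without it the conclusion of the \cite{BCM} theorem is not available as stated. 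The remaining ingredients of your plan --- Stone--Weierstrass approximation to pass from product functionals to a full class of test functions, the Cholesky factorization of the sexual diffusion matrix in the representation step, and the It\^o transform back from the compactified process to $(F,M)$ before $\mathbb T_e$ --- do match the paper's proof.
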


 The proof below provides an identification of the limiting points before the explosion time.
  Assumption {\bf (B2)} on the domination of the mating function could also be relaxed before explosion using localization argument.

 \bigskip Let us apply the approach developed in \cite{BCM} for the asexual case. The method is based on the convergence of the characteristics of the associated semi-martingales developed in  Jacod-Shiryaev \cite{JS},  with the use of a specific functional space. This latter  exploits the population recurrence-type structure and the independence of the random variables $\{ M_0^N,F_0^N,\mathcal E_{n,p}^{f,N},   \mathcal E^{m,N}_{n,p}, (L_{n,p}^{f,N},L^{m,N}_{n,p}), n,p \geq 1\} $. This method allows us to prove tightness and identification under the optimal moment assumption on the jump measure, see Assumption {\bf A}. 
 % We combine an exponential transform (leading to  Laplace functional) and the  use of  the density Stone-Weierstrass theorem on locally compact spaces (see \cite{BCM} Appendix 6.4). 
 
 \bi  Let us quickly summarize what we will do. We first remark that depending on the reproduction laws, we can have explosion of the process under Assumption {\bf A}. To deal with  this problem and to guarantee the boundedness of the characteristics, we  compactify the process as in \cite{BCM} by considering the new process $X^N$ defined as follows :
$$X^N_n=\left(\exp(-F^N_n/N),\exp(-M^N_n/N)\right).$$
This exponential transform combined with a functional space $\mathcal H$ formed by polynomials allow to exploit independence and positivity of the reproduction random variables.

\me  (I) In our setting, the characteristics of the exponential transform of the process  are given by
 formulas \eqref{characteristics} and \eqref{GN} below. 
 It has been proved in \cite{BCM} (see also Appendix A) that  their uniform convergence, in the sense of Lemma \ref{Gx} below guarantees the tightness of the sequence $(X^N_{[v_{N}.]})_{N}$ and yields the  characteristics of limiting semimartingales. 
 
 \me (II) To identify the limiting values as solutions of a stochastic differential equation, we need to   exploit   the explicit form given in Lemma \ref{Gx}. This representation
 is obtained  in Lemma \ref{LV}.
 
 \me (III) We come back to the initial process $Z^N$ using It\^o's formula, up to the explosion time and prove that the limiting values of the sequence $(Z^N)_{N}$ are solutions of the stochastic differential system \eqref{EDS-limit}. This will complete the proof
 of Proposition \ref{prop=tightness}.\\

 \bi Let us now develop this program. 
 
 \me (I)   The first part consists in introducing   functional space $\mathcal H$ and in proving Assumption (H1)  recalled in Appendix A. This  assumption  ensures the convergence of the characteristics of the rescaled Markov chain $(X^N_{[v_{N}.]})_{N}$ for test functions belonging to  $\mathcal H$  and provides their limiting form. Note that since $(X^N_{[v_{N}.]})_{N}$ is bounded, Assumption (H0) of \cite{BCM}  is obvious.  \\
 We  consider the space $\mathcal U=[-1,1]^2$ and  the space of monomial functions (on $\mathcal U$) defined  by
$$\mathcal H=\left\{ (u_{1},u_{2}) \in \mathcal U \rightarrow  H_{i,j}(u_{1},u_{2})  =(u_{1})^i(u_{2})^j \ ;\  i\geq 0, j\geq 0, i, j\ne 0\right\}.$$
Following \cite{JS,BCM}, 
%we  define
% $$C^2_{b,0}= \left\{H\in C_{b}({\cal U}, \mathbb{R}); H(u)=\sum_{i=1}^2 \alpha_{i} u_{i} + \sum_{i, j=1}^2 \beta_{i,j} u_{i}u_{j} +o(|u|^2), \alpha_{i}, \beta_{i,j} \in \mathbb{R}\right\}$$ and 
  we consider the following  family of linear operators characterizing the law of the increments of the scaled Markov chain. It is defined  for $H$ measurable and bounded and for  $x=(\exp(-y),\exp(-z))\in {\cal X}=(0,1]^2$ by
%As justified in EJP, we consider the specific test-functions defined  % with $y,z\in \mathbb N/N$,
$${\cal G}^N_x(H)=v_N\E\left( H(X^N_1 - x )| X^N_0=x\right),$$ where
\ben
X^N_1-x&=&\displaystyle{\Bigg(e^{-y}\bigg(\exp\Big({-\frac{1}{N}\sum_{p=1}^{[Ny]} \mathcal E_{p}^{f,N}  -\frac{1}{N} \sum_{p=1}^{g_N([Ny],[Nz])} L_{p}^{f,N}}\Big) -1\bigg)},\\
&&\qquad  \qquad \qquad
 \displaystyle{e^{-z}\bigg(\exp\Big({-\frac{1}{N}\sum_{p=1}^{[Nz]} \mathcal E_{p}^{m,N}-\frac{1}{N} \sum_{p=1}^{g_N([Ny],[Nz])} L_{p}^{m,N}}\Big)-1\bigg)\Bigg)}
 \een
  % $$\epsilon^N_{1}= \E\left(f_k \left(\frac{\mathcal E^{f,N}}{N}\right)\right), \quad \epsilon^N_{2}=
 %\E\left(f_{\ell} \left(\frac{\mathcal E^{m,N}}{N}\right)\right), \quad \epsilon^N_{3}=\E\left(f_{k,\ell}\Big({(L^{f,N}, L^{m,N})\over N}\Big)\right).$$

 \me   Assumption (H1.1,2) is a direct consequence of  Stone-Weierstrass theorem and  
 the convergence needed in (H1.3) is proved in forthcoming  Lemmas \ref{CV-charac} and \ref{Gx}. %Assumption (H2)  will be proved in Lemma \ref{LV}.
For that purpose, we set
\be
\label{characteristics}
A_{k,\ell}^N(x)&=&\E\left(\exp\left(-\frac{k}{N}\sum_{p=1}^{[Ny]} \mathcal E_{p}^{f,N}-\frac{\ell}{N}\sum_{p=1}^{[Nz]} \mathcal E_{p}^{m,N}-\frac{1}{N} \sum_{p=1}^{g_N([Ny],[Nz]) }L_{p}^{k,\ell,N}\right)\right),
\ee
where
$L_{p}^{k,\ell,N}=k L_{p}^{f,N}+\ell L_{p}^{m,N}$ and  using that $\sum_{k=0}^{i}\sum_{\ell=0}^j  (-1)^{i-k+j-\ell}\binom{i}{k}\binom{j}{\ell} =0$, we get by expansion 
\be
\label{GN}
{\cal G}_x^N(H_{i,j})&=&e^{-iy-jz}\sum_{k=0}^{i}\sum_{\ell=0}^j  (-1)^{i-k+j-\ell}\binom{i}{k}\binom{j}{\ell} v_N \big(A_{k,\ell}^N(x)-1\big).
\ee
Furthermore, we set for $u,v \in \R$,
$$ f_{k}(u) = 1 - e^{-ku}, \qquad f_{k,\ell}(u,v)= 1 - e^{- ku-\ell v}$$% , \qquad y,z \in \R.$$
for  $k,\ell  \in \mathbb{N}$, and 
by independence of the reproduction events, we have
\begin{equation}
\label{techno}
A_{k,\ell}^N=a_1^Na_2^Na_3^N,
\end{equation}
for any $x=(\exp(-y),\exp(-z))\in (0,1]^2$, where
\begin{eqnarray*}
a_1^N(x)&=&\exp\left([Ny] \log\left(1-\epsilon^N_1 \right)\right), \qquad  \quad \epsilon^N_1 =\E\left(f_{k}\left({\mathcal E}^{f,N}/N\right)\right)\\
% 1 - \E\left(\exp\left(-\frac{k}{N} {\mathcal E}^{f,N} \right)\right) \\
%= \E\left(\exp\left(-\frac{k}{N} {\mathcal E}^{f,N} \right)\right)^{[Ny]}\\
a_2^N(x)&=&\exp\left({[Nz] \log\left(1- \epsilon^N_2 \right)}\right), \qquad  \quad \epsilon^N_2 = \E\left(f_{\ell}\left({\mathcal E}^{m,N}/N\right)\right)\\
%= \E\left(\exp\left(-\frac{\ell}{N}\mathcal E^{m,N} \right)\right)^{[Nz]}\\
a_3^N(x)&=& \exp\left(g_N([Ny],[Nz]) \log\left(1-\epsilon^N_3 \right)\right), \quad  \epsilon^N_3 =\E\left(f_{k,\ell}\left((L^{f,N},L^{m,N})/N\right)\right).
%1-\E\left(\exp\left(-\frac{L_{p}^{k,\ell,N}}{N}  \right)\right).
%=\E\left(\exp\left(-\frac{1}{N}   L^{k,\ell,N}\right)\right)^{g_N([Ny],[Nz])}, \quad 
\end{eqnarray*} 
%$$,$$$\epsilon^N_2$ and $\epsilon^N_3$ being defined in a similar way.

\bigskip 
We use the following functions $f_{k}$ and $f_{k,\ell}$ and   their decompositions 
\begin{eqnarray*}
f_k(u)&= &kh(u)-{k^2\over 2} h^2(u)+R_k(u),\\
 f_{k,\ell}(u,v)&= &k h(u) +\ell h(v) - {k^2\over 2} h^2(u) -{\ell^2\over 2} h^2(v) - k\ell h(u)h(v) +R_{k,\ell}(u,v),
 \end{eqnarray*} 
 where  $R_{k}$ (resp.  $R_{k,\ell}$) is continuous  bounded and  $o(u^2)$ (resp. $o(\|(u,v)\|^2)$) in a neighborhood of $0$  (resp.
  $(0,0)$). These decompositions allow us to derive the asymptotic behavior 
  of $\epsilon_{i}^N$ from  Assumption {\bf A}, by summing the three components.
  Indeed, $R_{k}$ (resp.  $R_{k,\ell}$) are not null in a neighborhood of zero but small enough and  a simple approximation argument, which follows e.g. \cite[Section 4]{BCM}, yields
  $v_NN\E(R_k(L^{\bullet,N}))\rightarrow\int R_{k} d\nu_{\bullet}$ and
  $v_NN\E(R_{k,\ell}((L^{f,N},L^{m,N})/N))\rightarrow \int R_{k,\ell} d\nu_S$ as $N\rightarrow \infty$.
We get
 \be
 v_NN\,\epsilon^N_1\, \stackrel{N\rightarrow\infty}{\longrightarrow}\, \gamma_{k}^f&=&\alpha_{f}\, k - \frac{1}{2}\,\sigma_{f}^2 \, k^2 +  \int_{0}^{\infty} \big(f_k(u) - k h(u)\big) \nu_{f}(du) , \label{conve1}\\
  v_NN\,\epsilon^N_2\, \stackrel{N\rightarrow\infty}{\longrightarrow}\,\gamma_{\ell}^m&=&\alpha_{m}\, \ell - \frac{1}{2}\,\sigma_{m}^2 \,\ell^2 +  \int_{0}^{\infty} \big(f_{\ell}(u) - \ell h(u)\big) \nu_{m}(du), \label{conve2}\\
 v_NN \, \epsilon^N_3  \, \stackrel{N\rightarrow\infty}{\longrightarrow}\, \gamma_{k,\ell}^S&=&\alpha_{f}^S\, k+\alpha_m^S \, \ell- \frac{1}{2}\,(\sigma_{f}^S)^2 \,k^2- \frac{1}{2}\,(\sigma_{m}^S)^2 \, \ell^2 -(\sigma_{fm}^S)^2k\ell \nonumber \\
&&\quad +  \int_{\R_{+}^2} \big(f_{k,\ell}(u_1, u_2) - k h(u_1)-\ell h(u_2)\big) \nu_{S}(du_1du_2), \label{conve3}
\ee
where  we recall that $\sigma_{\bullet \bullet}^S$ is denoted by $\sigma_{\bullet}^S$.  

\bi
Letting $N\rightarrow \infty$, we obtain the following uniform convergence: 
\begin{lem}
\label{CV-charac}
For any $(i,j) \in \mathbb{N}^2\setminus \{(0,0)\}$, for any $k, \ell \in \mathbb{N}^2$,
\ben
\sup_{x\in (0,1]^2} e^{-iy-jz}\, \Big| v_N(A_{k,\ell}^N(x)-1)  + \gamma_k^fy+\gamma_\ell^mz+\gamma_{k,\ell}^Sg(y,z) \Big| \stackrel{N\to \infty}{\longrightarrow} 0,
\een
where $x=(e^{-y},e^{-z})$.
\end{lem}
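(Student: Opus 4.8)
The plan is to turn the product representation \eqref{techno} into a first-order expansion in $1/v_N$ and then to control the two resulting remainders uniformly in $x=(e^{-y},e^{-z})\in(0,1]^2$; the prefactor $e^{-iy-jz}$ is exactly what absorbs the at-most-linear growth of the limit in $(y,z)$.

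First I would pass from the $\epsilon^N_i$ to the logarithms in \eqref{techno}. Since $v_NN\epsilon^N_i$ converges by \eqref{conve1}--\eqref{conve3} while $v_NN\to+\infty$, each $\epsilon^N_i\to 0$, so $-\log(1-\epsilon^N_i)=\epsilon^N_i+O((\epsilon^N_i)^2)$ and the quantities $\beta^N_1:=-v_NN\log(1-\epsilon^N_1)$, $\beta^N_2:=-v_NN\log(1-\epsilon^N_2)$, $\beta^N_3:=-v_NN\log(1-\epsilon^N_3)$ still converge to $\gamma_k^f,\gamma_\ell^m,\gamma_{k,\ell}^S$. Taking logarithms in \eqref{techno} gives $\log A^N_{k,\ell}(x)=-\Lambda^N(x)/v_N$ with
$$\Lambda^N(x)=\frac{[Ny]}{N}\,\beta^N_1+\frac{[Nz]}{N}\,\beta^N_2+\frac{g_N([Ny],[Nz])}{N}\,\beta^N_3,$$
so that, writing $\Lambda(x)=\gamma_k^f\,y+\gamma_\ell^m\,z+\gamma_{k,\ell}^S\,g(y,z)$, one has the exact identity
$$v_N\bigl(A^N_{k,\ell}(x)-1\bigr)+\Lambda(x)=\bigl(\Lambda(x)-\Lambda^N(x)\bigr)+v_N\Bigl(e^{-\Lambda^N(x)/v_N}-1+\tfrac{\Lambda^N(x)}{v_N}\Bigr).$$
It then suffices to show that each of the two bracketed terms, multiplied by $e^{-iy-jz}$, tends to $0$ uniformly on $(0,1]^2$.

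For the first term I would split $\Lambda-\Lambda^N$ into a discretization part $\gamma_k^f(y-\tfrac{[Ny]}{N})+\gamma_\ell^m(z-\tfrac{[Nz]}{N})+\gamma_{k,\ell}^S\bigl(g(y,z)-\tfrac{g_N([Ny],[Nz])}{N}\bigr)$ and a coefficient part $\tfrac{[Ny]}{N}(\gamma_k^f-\beta^N_1)+\tfrac{[Nz]}{N}(\gamma_\ell^m-\beta^N_2)+\tfrac{g_N([Ny],[Nz])}{N}(\gamma_{k,\ell}^S-\beta^N_3)$. The discretization part is handled by $|y-[Ny]/N|\le 1/N$ together with the uniform convergence \eqref{CVmating} of {\bf (B1)}, and is $o(1)$ after multiplication by $e^{-iy-jz}\le 1$; the coefficient part is $o(1)$ because $\gamma-\beta^N\to0$ (uniformly, being $x$-free) while $e^{-iy-jz}$ multiplied by $[Ny]/N\le y$, by $[Nz]/N\le z$, or by $g_N([Ny],[Nz])/N\le a(y\wedge z)+b+o(1)$ stays bounded. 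Here the hypothesis $(i,j)\ne(0,0)$ enters: $y\,e^{-iy}$, $z\,e^{-jz}$ and $(a(y\wedge z)+b)e^{-iy-jz}$ are bounded as soon as the relevant index is $\ge1$, and in the range $0\le k\le i$, $0\le\ell\le j$ used in \eqref{GN} the coefficient of a growing variable vanishes whenever its conjugate index is zero, since $\gamma_0^f=\gamma_0^m=0$.

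The main obstacle is the second term, the exponential nonlinearity, which must be controlled even where $A^N_{k,\ell}>1$ (that is, where $\Lambda^N<0$, possible when a coefficient is negative) so that $e^{|\Lambda^N|/v_N}$ may grow. Using $|e^{-s}-1+s|\le\tfrac{s^2}{2}e^{|s|}$ with $s=\Lambda^N/v_N$ bounds this term by $\tfrac{1}{2v_N}\,e^{-iy-jz}(\Lambda^N)^2\,e^{|\Lambda^N|/v_N}$. The point is that by {\bf (B2)} one has $|\Lambda^N(x)|\le C\bigl(y+z+a(y\wedge z)+b\bigr)$ for an $N$-uniform constant $C$, so $|\Lambda^N|/v_N$ has growth rate $O(1/v_N)$ in each of $y,z$; for $N$ large this rate is strictly below $i$ (resp.\ $j$) in the directions where these indices are nonzero, while the degenerate directions carry no growth because $\beta^N$ vanishes there. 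Hence $e^{-iy-jz}e^{|\Lambda^N|/v_N}$ is dominated by a genuinely decaying exponential, its product with the at-most-quadratic factor $(\Lambda^N)^2$ is bounded uniformly in $(y,z)$ and in large $N$, and the prefactor $\tfrac{1}{2v_N}\to0$ forces the whole term to $0$ uniformly. Combining the two estimates yields the claimed uniform convergence.
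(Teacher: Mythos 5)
Your proof is correct, but it follows a genuinely different route from the paper's. The paper works factor by factor on the product \eqref{techno}: it expands it into the additive form \eqref{Aussois}, proves $\sup_{x}e^{-(iy+jz)/3}\,|v_N(a_p^N(x)-1)-\gamma_p(x)|\to 0$ separately for $p=1,2,3$ (delegating $p=1,2$, the asexual factors, to the estimates of \cite{BCM}), and handles the mating factor $p=3$ by splitting into the two regimes $g_N([Ny],[Nz])/N\le v_N$ (Taylor expansion of the exponential) and $g_N([Ny],[Nz])/N\ge v_N$ (where the weight kills $v_N(a_3^N-1)$ and $\gamma_3(x)$ individually); the fractional weight $e^{-(iy+jz)/3}$ is precisely what allows the cross-product terms of \eqref{Aussois} to be absorbed at the end. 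You instead take the logarithm of the whole product at once and use the exact identity $v_N(A^N_{k,\ell}-1)+\Lambda=(\Lambda-\Lambda^N)+v_N\bigl(e^{-\Lambda^N/v_N}-1+\Lambda^N/v_N\bigr)$, reducing everything to a linear discrepancy (discretization plus convergence of the $x$-free coefficients $\beta_p^N\to\gamma_p$) and a second-order remainder controlled by $|e^{-s}-1+s|\le \tfrac{s^2}{2}e^{|s|}$, the weight $e^{-iy-jz}$ dominating $e^{|\Lambda^N|/v_N}$ times quadratic growth as soon as the $O(1/v_N)$ growth rate falls below the nonzero indices. What your route buys: it is self-contained (no appeal to \cite{BCM} for the monosexual factors), it needs no regime split, and it makes explicit a point the paper leaves silent, namely that the statement is only valid on the range $k\le i$, $\ell\le j$ actually used in \eqref{GN}, together with the vanishing $\gamma_0^f=\gamma_0^m=0$ and $\beta^N=0$ for zero indices --- as literally stated, ``for any $k,\ell$'', the claim can fail (e.g.\ $j=0$, $\ell\ge 1$, $\gamma_\ell^m<0$ makes the supremum infinite for every $N$). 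What the paper's route buys: the per-factor estimates \eqref{cvunif1} are reusable building blocks matching the model's structure (asexual terms versus mating term), and the weight-splitting trick extends directly to products of more factors. Note finally that both proofs share the same implicit step of identifying $g_N([Ny],[Nz])/N$ with $g(y,z)$ at non-grid points via \eqref{CVmating}, so this is not a gap of yours relative to the paper.
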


 \me \begin{proof} We use the expression \eqref{techno} which is rewritten :
  %   writes  $A_{k,\ell}^N(x)$ as the product $a_{1}^N a_{2}^N a_{3}^N$ of three terms that can be written in a tricky manner to exploit the appropriate  Taylor approximation:
\be
\label{Aussois} A_{k,\ell}^N &=& 1 + (a_{1}^N-1)+(a_{2}^N-1)+(a_{3}^N-1)+(a_{1}^N-1)(a_{2}^N-1) +(a_{1}^N-1)(a_{3}^N-1) \nonumber \\
 &&+(a_{2}^N-1)(a_{3}^N-1)+(a_{1}^N-1)(a_{2}^N-1)(a_{3}^N-1)\ee
 for a convenient Taylor expansion.
We show now the uniform convergences 
 \be
 \label{cvunif1}
 \sup_{x\in (0,1]^2} e^{-(iy+jz)/3}\left\vert v_{N}(a_{p}^N(x) -1)- \gamma_{p}(x)    \right\vert \stackrel{N\to \infty}{\longrightarrow} 0,
 \ee
 for  $p=1,2, 3,$  where
 $$\gamma_{1}(x)= \gamma^f_{k} \,y \ ;\ \gamma_{2}(x)=  \gamma^m_{\ell}\, z\ ; \ \gamma_{3}(x) = \gamma_{k,\ell}^S \,g(y,z).$$
%and recall that $$\, a_{1}^N= e^{[Ny] \log(1-\epsilon^N_{1})} \ ;\  a_{2}^N= e^{[Nz] \log(1-\epsilon^N_{2})}\, ;\  a_{3}^N= e^{g_{N}([Ny],[Nz]) \log(1-\epsilon^N_{3})}.$$

The  terms for $p=1,2$ correspond to the scaling of a Galton-Watson process and have already been considered in \cite{BCM}. 
%We first observe  that $a_{1}^N=\exp\left([Ny] \log\left(1-\epsilon^N_1 \right)\right)$ can be expanded uniformly in $y\leq v_{N}$ as 
%$$1+ {1\over v_{N}}\gamma_{k}^f \,y  + y\, o_{N}(1) + O(y^2/v_{N})$$
%using Assumption {\bf A}. 
%Then  $\ \sup_{y\le v_{N}} e^{-iy/2} \big(v_{N}(a_{1}^N
%-1) - \gamma^f_{k}\,y\big)\ $ tends to $0$ as $N$ tends to infinity. 
%\me For $y\geq v_{N}$,  $a_{1}^N$ is bounded by
%$$e^{(\gamma^+/v_{N})y }$$ with $\gamma^+ = \sup_{N} \Big\{N v_{N}\log(1-\epsilon_{1}^N)\Big\}$.  Then,
%$\ \sup_{y\ge v_{N}} e^{-iy/2} (a_{1}^N
%-1)\ $ tends to $0$ as $N$ tends to infinity.  
%Combining these two two estimates,
%$$\sup_{y\ge 0}  e^{-iy/2}e^{-iy/2} \vert v_N(a^N_1-1)-\gamma^f_k y\vert \rightarrow 0 \quad N\to \infty$$
%\medskip A similar argument leads the uniform convergence of the term associated with $a_{2}^N$. 
Hence, we  focus on the third term, which is more delicate.   Using \eqref{conve3} and  Assumption {\bf (B.1)}, we first expand  
$$a_{3}^N(x)=e^{g_N([Ny],[Nz]) \log\left(1-\epsilon^N_3 \right)}=1+ {1\over v_{N}}\left(\gamma_{k,\ell}^S\,g(y,z)  + (g(y,z)+1) o_{N}(1)\right) + O\left({g(y,z)^2+1 \over v_{N}^2}\right),$$
as $N\rightarrow \infty$, uniformly for $x$ such that  $g_{N}(Ny,Nz)/N\leq v_{N}$. 
Combining this estimate and Assumption {\bf (B.2)} yields  $$ \sup_{ g_{N}(Ny,Nz)/N \le v_{N}} e^{-(iy+jz)/3} \left\vert v_{N}(a_{3}^N(x)
-1) - \gamma_{k,\ell}^S\,g(y,z) \right\vert \stackrel{N\to \infty}{\longrightarrow} 0.$$
% tends to $0$ as $N$ tends to infinity.

\me Besides,  $\gamma^* = \sup_{N} \Big\{N v_{N} \log(1-\epsilon_{3}^N)\Big\}$ is finite 
since $Nv_N\epsilon_{3}^N$ has a finite limit. 
For $x$ such that  $g_{N}(Ny,Nz)/N \geq v_{N}$, we have
\ben
e^{-(iy+jz)/3}    v_N(a_{3}^N(x)+1)& \leq & e^{-(iy+jz)/3}
 \frac{g_{N}(Ny,Nz)}{N} (e^{(\gamma^*/v_{N}) g_{N}(Ny,Nz)/N}+1)\\
&\leq & e^{-(iy+jz)/3} (g(y,z)+o(1))(e^{(\gamma^*/v_{N}) (g(y,z)+o(1))}+1),
\een
where $o(1)$ is uniform with respect to $x$ using \eqref{CVmating}.
%with $\gamma^* = \sup_{N} \Big\{N v_{N} \log(1-\epsilon_{3}^N)\Big\}$.
%, using \eqref{CVmating} again.  
Recalling \eqref{g-exp},
we get that both $e^{-(iy+jz)/3} v_N(a_{3}^N-1)$ and $e^{-(iy+jz)/3}\gamma_3(x)$
converge to $0$ as $N$ tends to infinity, uniformly for $g_{N}(Ny,Nz)/N \geq v_{N}$.
This ends the proof of \eqref{cvunif1}.

\me Combining the three uniform convergences in \eqref{Aussois} yields the conclusion. 
\end{proof}

\bi  We can now compute the limit of \eqref{GN}, as $N$ tends to infinity, which is achieved in the following lemma. 

\me \begin{lem} 
\label{Gx} For any $(i,j) \in \mathbb{N}^2\setminus \{(0,0)\}$, we have 
 $$
 \sup_{x\in (0,1]^2} \left\vert {\cal G}_x^N(H_{i,j})-{\cal G}_x(H_{i,j}) 
 \right\vert \stackrel{N\to \infty}{\longrightarrow}  0,
 $$ 
 where, writing $x=(e^{-y},e^{-z})$ and  denoting by $\delta_{i,j}$  the Kronecker symbol:  %${\cal G}_x(H_{i,j})$ satisfies 
\ben
 -e^{iy+jz}{\cal G}_x(H_{i,j})&=&y\,\delta_{j,0}\left(\delta_{i,1}\alpha_f-(2\delta_{i,2}+\delta_{i,1})\sigma_f^2/2 +\int_0^{\infty}  \left( (-1)^{i+1}f_1(u)^i-\delta_{i,1}h(u)\right)\nu_f(du)\right)\\
 &&+z\,\delta_{i,0}\left(\delta_{j,1}\alpha_m- (2\delta_{j,2}+\delta_{j,1})\sigma_m^2/2 +\int_0^{\infty}  \left( (-1)^{j+1}f_1(u)^j-\delta_{j,1}h(u)\right)\nu_m(du)\right)\\
 &&+g(y,z)\bigg(\delta_{j,0}\left[\delta_{i,1}\alpha_f^S-(2\delta_{i,2}+\delta_{i,1})(\sigma_{f}^S)^2/2\right] +\delta_{i,0}\left[\delta_{j,1}\alpha_m^S-(2\delta_{j,2}+\delta_{j,1})(\sigma_{m}^S)^2/2\right]\\
 &&\qquad \qquad +\delta_{i,1} \delta_{j,1}(\sigma_{fm}^S)^2+\int_{[0,\infty)^2} g_{i,j}(u_{1}, u_{2})\nu_{S}(du_{1}, du_{2})\bigg),
 \een
  and
 \ben 
 g_{i,j}(u)&=&\delta_{j,0}\big((-1)^{i+1}f_1(u_1)^i-\delta_{i,1}h(u_1)\big)\\
 &&+\delta_{i,0}\big((-1)^{j+1}f_1(u_2)^j-\delta_{j,1}h(u_2)\big)-(-1)^{i+j}f_1(u_1)^if_2(u_2)^j 1_{i\ne 0} 1_{j\ne 0}.
 \een
 \end{lem}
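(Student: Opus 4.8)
The plan is to read off $\mathcal{G}_x$ by substituting the limit supplied by Lemma~\ref{CV-charac} directly into the exact expansion \eqref{GN}, and then to evaluate the resulting finite-difference sums. The crucial structural observation is that the prefactor $e^{-iy-jz}$ in \eqref{GN} is exactly the weight appearing in Lemma~\ref{CV-charac}; since for fixed $(i,j)$ the sum in \eqref{GN} runs over the \emph{finite} index set $\{0,\dots,i\}\times\{0,\dots,j\}$, and since for each such $(k,\ell)$ Lemma~\ref{CV-charac} gives
\[
\sup_{x\in(0,1]^2} e^{-iy-jz}\bigl|v_N(A_{k,\ell}^N(x)-1)+\gamma_k^f y+\gamma_\ell^m z+\gamma_{k,\ell}^S g(y,z)\bigr|\longrightarrow 0,
\]
the uniform convergence passes through the finite linear combination weighted by the bounded coefficients $(-1)^{i-k+j-\ell}\binom{i}{k}\binom{j}{\ell}$. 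No new analytic input is required: the genuine analysis (the domination of $g$ and the case split on $g_N(Ny,Nz)/N$ versus $v_N$) has already been absorbed into Lemma~\ref{CV-charac}. This identifies the limit as
\[
-e^{iy+jz}\mathcal{G}_x(H_{i,j})=\sum_{k=0}^{i}\sum_{\ell=0}^{j}(-1)^{i-k+j-\ell}\binom{i}{k}\binom{j}{\ell}\bigl(\gamma_k^f y+\gamma_\ell^m z+\gamma_{k,\ell}^S g(y,z)\bigr),
\]
and what remains is purely algebraic: collapsing this double sum onto the stated form.

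Next I would reduce everything to the elementary identities
\[
\sum_{k=0}^{i}(-1)^{i-k}\binom{i}{k}=\delta_{i,0},\quad \sum_{k=0}^{i}(-1)^{i-k}\binom{i}{k}k=\delta_{i,1},\quad \sum_{k=0}^{i}(-1)^{i-k}\binom{i}{k}k^2=\delta_{i,1}+2\delta_{i,2},
\]
together with the generating identity $\sum_{k=0}^{i}(-1)^{i-k}\binom{i}{k}x^k=(x-1)^i$. Splitting the double sum into its three pieces, the $y$-piece factorizes as $\bigl(\sum_k(-1)^{i-k}\binom{i}{k}\gamma_k^f\bigr)\bigl(\sum_\ell(-1)^{j-\ell}\binom{j}{\ell}\bigr)$, the second factor being $\delta_{j,0}$; the polynomial part of $\gamma_k^f$ then produces the $\alpha_f\delta_{i,1}$ and $-\tfrac12\sigma_f^2(\delta_{i,1}+2\delta_{i,2})$ coefficients. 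The $z$-piece is symmetric, and the coupling piece, being a true double difference, yields products of Kronecker symbols, notably the $\delta_{i,1}\delta_{j,1}$ term multiplying $(\sigma_{fm}^S)^2$.

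For the jump contributions I would push the finite difference inside the integrals (legitimate, the sums being finite) and invoke $\sum_{k=0}^{i}(-1)^{i-k}\binom{i}{k}x^k=(x-1)^i$ at $x=e^{-u}$. With $f_k(u)=1-e^{-ku}$ this gives $\sum_k(-1)^{i-k}\binom{i}{k}f_k(u)=\delta_{i,0}+(-1)^{i+1}f_1(u)^i$, while the $kh(u)$ correction survives only through $\delta_{i,1}$, reproducing the one-dimensional integrands $(-1)^{i+1}f_1(u)^i-\delta_{i,1}h(u)$. For the joint measure $\nu_S$ I would apply the same identity separately in each variable to $f_{k,\ell}(u_1,u_2)=1-e^{-ku_1-\ell u_2}$, obtaining when $i,j\neq 0$ the factorized cross term $-(-1)^{i+j}f_1(u_1)^if_1(u_2)^j$, which constitutes the joint contribution to $g_{i,j}$.

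The one place demanding care is bookkeeping rather than analysis: I must use the standing hypothesis $(i,j)\neq(0,0)$ to discard every $\delta_{i,0}\delta_{j,0}$ term (these arise from the constant $1$ in $f_k$ and in $f_{k,\ell}$), and track signs through the double difference, since it is precisely there that the Brownian cross term and the factorized jump cross term are generated. I expect this sign- and delta-accounting to be the only real source of error; the remaining analytic content is nil, so the proof is genuinely a verification that the finite-difference sums collapse onto the displayed expression for $\mathcal{G}_x(H_{i,j})$.
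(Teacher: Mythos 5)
Your proposal is correct and follows essentially the same route as the paper's own proof: both pass the uniform convergence of Lemma \ref{CV-charac} through the finite, bounded-coefficient binomial sum in \eqref{GN}, then collapse the resulting double sum using the identical Kronecker-delta identities for $\sum_k(-1)^{i-k}\binom{i}{k}k^m$, $m=0,1,2$. Your use of the generating identity $\sum_{k=0}^i(-1)^{i-k}\binom{i}{k}x^k=(x-1)^i$ at $x=e^{-u}$, applied coordinatewise for $\nu_S$, is just a trivially equivalent rewriting of the paper's decomposition $f_{k,\ell}(u_1,u_2)=f_k(u_1)+f_\ell(u_2)-f_k(u_1)f_\ell(u_2)$ combined with its identity $\sum_k\binom{i}{k}(-1)^{i-k}f_k(u)=(-1)^{i+1}f_1(u)^i\,\un_{i>0}$.
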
 

\begin{proof}
Combining \eqref{GN}  and the uniform convergence of the previous lemma, we obtain that ${\cal G}_x^N(H_{i,j})$ converges uniformly, as $n$ tends to infinity, to ${\cal G}_x(H_{i,j})$ which satisfies
$$-e^{iy+j z}{\cal G}_x(H_{i,j})=\sum_{k=0}^{i}\sum_{\ell=0}^j  (-1)^{i-k+j-\ell}\binom{i}{k}\binom{j}{\ell} (y \gamma_k^f+z\gamma_\ell^m+g(y,z)\gamma_{k,\ell}^S).$$
Plugging  the expressions of the constants $\gamma$ given in
\eqref{conve1} and \eqref{conve2} and \eqref{conve3}, the sum  above can be simplified 
using  $f_{k,\ell}(u_1,u_2)=f_{k}(u_1)+f_{\ell}(u_2)-f_{k}(u_1)f_{\ell}(u_2)$ and 
$$\  \sum_{k=0}^{i} \binom{i}{k}(-1)^{i-k}  = \delta_{0,i}
\quad ;\quad   \sum_{k=0}^{i} \binom{i}{k}(-1)^{i-k} k = \delta_{1,i},$$
$$\ \sum_{k=0}^{i} \binom{i}{k}(-1)^{i-k} \,k^2 = 2 \delta_{2,i} + \delta_{1,i}\quad ;\quad 
\ \sum_{k=0}^{i} \binom{i}{k}(-1)^{i-k} f_k(u) =  (-1)^{i+1}f_1(u)^i1_{i>0}.$$
We obtain the expected result.
\end{proof}

\bi 
(II) We now proceed  with the representation of the limiting points.
For that purpose, we proceed with the successive identification of the coefficients of the stochastic differential equation associated with the limiting characteristics obtained above.
Firstly, we gather the jump terms in a common Poisson representation. Indeed, considering first  ${\cal G}_x(H_{i,j})$ for $i+j\geq 3$ leads us to 
define the measure $\mu$ on $V=\{1,2,3\}\times [0,+\infty)\times[0,\infty)^2$ by
\be
\label{mu}\mu(dk, d\theta, du_{1},du_{2})& =& \delta_1(dk)\, d\theta \, \nu_{f}(du_{1}) \delta_0(du_{2}) + \delta_2(dk)\, d\theta \,  \delta_0(du_{1}) \, \nu_{m}(du_{2})\nonumber\\
&&\qquad \qquad  \qquad \qquad   \qquad \qquad  + \delta_3(dk) \, d\theta \, \nu_{S}(du_{1},du_{2}),
\ee
where $\delta_{k}$ is the Dirac mass in $k$. 
The jump image function $K=(K_1,K_2)$ is  the measurable function $K: (x,v)\in [0,1]^2\times V\rightarrow K(x,v)\in \R^2$  given by 
\be
\label{K1} K_{1}(x,v)=K_{1}(x,k,\theta, u_{1},u_{2}) =  - e^{-y}. \bigg(  f_{1}(u_{1})\, \un_{k=1, \, \theta \leq y} + f_{1}(u_{1})\, \un_{k=3, \, \theta \leq g(y,z)}\bigg), \\
\label{K2} K_2(x,v)= K_{2}(x,k,\theta, u_{1},u_{2}) =  - e^{-z}. \bigg(  f_{1}(u_{2})\, \un_{k=2, \, \theta \leq z} + f_{1}(u_{2})\, \un_{k=3, \, \theta \leq g(y,z)}\bigg),
\ee
where we recall that $x=(\exp(-y), \exp(-z))$.
Let us observe that $ \int_{V}   1\wedge\vert K(.,v)\vert^2  \mu(dv)<+\infty.$ 
Secondly, using ${\cal G}_x(H_{i,j})$ for $i+j=2$, we define  the diffusion coefficients
$\sigma(.)\in \mathcal M_{2,4}(\R)$  as follows
$$\sigma_{11}(x)= e^{-y} \sqrt{y}\sigma_f,\quad  \sigma_{12}(x)=0, \quad  \sigma_{21}(x)=0,  \quad \sigma_{22}(x)= e^{-z} \sqrt{z}\sigma_m,$$
and
$$\sigma_{13}(x)= e^{-y} \sqrt{g(y,z)}\sqrt{(\sigma^S_f)^2-  (\sigma^S_{fm})^4/(\sigma^S_m)^2}, \quad \sigma_{14}(x)= e^{-y} \sqrt{g(y,z)}(\sigma^S_{fm})^2/\sigma^S_m$$
$$ \sigma_{23}(x)=0,  \quad \sigma_{24}(x)= e^{-z} \sqrt{g(y,z)}\sigma^S_m.$$
Finally we set the drift term  $b(.)=(b_1(.),b_2(.))\in \R^2$:
\ben
b_1(x)={\cal G}_x(H_{1,0})&=&e^{-y}y\left( -\alpha_f+\frac{\sigma_f^2}{2}-\int_0^{\infty}  \left( f_1(u)-h(u)\right)\nu_f(du)\right) \\
&&+e^{-y}g(y,z)\left(-\alpha_f^S+\frac{(\sigma_{f}^S)^2}{2}-\int_0^{\infty}  \left( f_1(u_1)-h(u_1)\right)\nu_S(du_1,du_2) \right); \\
b_2(x)={\cal G}_x(H_{0,1})&=&e^{-z}z\left( -\alpha_m+\frac{\sigma_m^2}{2}-\int_0^{\infty}  \left( f_2(u)-h(u)\right)\nu_m(du)\right) \\
&&+e^{-z}g(y,z)\left(-\alpha_f^S+\frac{(\sigma_{f}^S)^2}{2}-\int_0^{\infty}  \left( f_1(u_2)-h(u_2)\right)\nu_S(du_1,du_2) \right).
\een
These parameters yield the following representation of  the limiting points of $(X^N_{[v_N.]})_N$.
\begin{lem}
\label{LV} Any limiting value in $\mathbb D([0,\infty), [0,1]^2)$ of the sequences of processes $\left(X^N_{[v_{N}.]}\right)_N$ is  a   semimartingale    solution of the  stochastic differential system
 \be
 \label{eds}
 X_{t}&=&X_{0} + \int_{0}^t b(X_{s}) ds + \int_{0}^t \sigma(X_{s}) dB_{s} + \int_{0}^t\int_{V} K(X_{s-},v) \tilde N(ds, dv),
 \ee
where  
 $B$ is a $4$-dimensional Brownian motion and
    $N$ is a Poisson point  measure on $\R_{+}\times V$ with intensity $ds \mu(dv)$,   $X_{0}, B$, $N$ are independent and $\tilde N$ is the   compensated martingale measure of $N$.
\end{lem}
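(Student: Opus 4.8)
The plan is to feed the uniform convergence of characteristics obtained in Lemma~\ref{Gx} into the general semimartingale convergence result of \cite{BCM} (recalled in Appendix~A), and then to read off the coefficients of \eqref{eds} from the limiting operator $\mathcal G_x$. Since the processes $(X^N_{[v_N\cdot]})_N$ take values in the compact set $[0,1]^2$, Assumption (H0) is immediate, Assumptions (H1.1)--(H1.2) follow from Stone--Weierstrass applied to the monomial family $\mathcal H$, and (H1.3) is exactly the content of Lemma~\ref{Gx}. The output of \cite{BCM} is then that the sequence is tight and that any limit point $X$ in $\mathbb D([0,\infty),[0,1]^2)$ is a semimartingale whose local characteristics $(B,C,\nu)$, taken relative to the truncation $h$, are absolutely continuous in time with densities prescribed, coordinate by coordinate, by the limiting operators $\mathcal G_x(H_{i,j})$.

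Next I would match these characteristics with the coefficients announced before \eqref{eds}. The first-order monomials give the drift $b=(\mathcal G_\cdot(H_{1,0}),\mathcal G_\cdot(H_{0,1}))=(b_1,b_2)$. For the jump part I would check that the compensator $\nu$ is the image of $ds\,\mu(dv)$ under $v\mapsto K(x,v)$; concretely, for each $(i,j)$ with $i+j\ge 1$ one verifies that $\int_V H_{i,j}(K(x,v))\,\mu(dv)$ reproduces the integral terms carried by $g_{i,j}$ in Lemma~\ref{Gx}. This is a direct computation: with $H_{i,j}(u)=u_1^iu_2^j$ and $K$ given by \eqref{K1}--\eqref{K2}, each of the indicators $\mathbf 1_{\theta\le y}$, $\mathbf 1_{\theta\le z}$, $\mathbf 1_{\theta\le g(y,z)}$ integrated in $\theta$ produces the prefactors $y$, $z$, $g(y,z)$, while integrating the powers $f_1(u_1)^i$, $f_1(u_2)^j$ and their product against the three pieces of $\mu$ in \eqref{mu} produces the $\nu_f$, $\nu_m$ and $\nu_S$ integrals of $(-1)^{i+1}f_1^i$ and of $f_1(u_1)^if_1(u_2)^j$; in particular the product is supported on the atom $k=3$, which is why \eqref{K1}--\eqref{K2} couple the two coordinates only through that atom. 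Finally, the three second-order monomials $H_{2,0},H_{0,2},H_{1,1}$ fix the diffusion part $C=\int_0^\cdot c(X_s)ds$ with $c=\sigma\sigma^\top$, once the now-known jump contribution to the second moments has been subtracted; the $2\times4$ matrix $\sigma$ stated before \eqref{eds} is one square root of $c$, its nontrivial $2\times2$ block being a triangular (Cholesky-type) factorization of the Gaussian sexual covariance, which is admissible precisely because $(\sigma_f^S)^2-(\sigma_{fm}^S)^4/(\sigma_m^S)^2\ge 0$, as recorded after Theorem~\ref{main}.

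Having matched $(b,\,c=\sigma\sigma^\top,\,\nu)$, I would conclude with a representation theorem for semimartingales with prescribed characteristics (\cite{JS}, on a canonical extension of the probability space if needed): the continuous local martingale part is written as $\int_0^t\sigma(X_s)\,dB_s$ for a four-dimensional Brownian motion $B$, and the purely discontinuous part as $\int_0^t\int_V K(X_{s-},v)\,\tilde N(ds,dv)$ for a Poisson point measure $N$ on $\R_+\times V$ with intensity $ds\,\mu(dv)$, with $X_0$, $B$, $N$ independent; the bound $\int_V 1\wedge|K(\cdot,v)|^2\,\mu(dv)<\infty$ noted after \eqref{K2} makes the jump integral well defined. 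This yields \eqref{eds}. The main obstacle I anticipate is the jump step: it is not enough to match second moments, one must show that the \emph{whole} compensator is the pushforward of a single fixed measure $\mu$ under $v\mapsto K(x,v)$ simultaneously for all $(i,j)$, and this is exactly what the identity equating $\int_V H_{i,j}(K(x,v))\,\mu(dv)$ with the integral term of $g_{i,j}$ secures; a secondary difficulty is the possible degeneracy of $c$, which forces the enlargement of the space in the representation step.
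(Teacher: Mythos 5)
Your plan is in substance the paper's own proof: verify the hypotheses of the \cite{BCM} machinery recalled in Appendix A, match the limiting operator $\mathcal G_x$ of Lemma \ref{Gx} with the triplet (drift $b$, diffusion $\sigma\sigma^\top$, jump kernel $K_\#\mu$) --- first-order monomials give $b$, the monomials with $i+j\ge 3$ identify the jump compensator as the pushforward of $ds\,\mu(dv)$ under $v\mapsto K(x,v)$, and $H_{2,0},H_{0,2},H_{1,1}$ fix $c_{a,b}(x)=\sum_k\sigma_{a,k}\sigma_{b,k}(x)+\int_V K_aK_b(x,v)\mu(dv)$ --- and then invoke a representation theorem to produce $B$ and $N$ on an enlarged space. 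The paper packages the last step inside Theorem \ref{identification} (Theorem 2.4 of \cite{BCM}) rather than citing \cite{JS} directly, but the content is identical, including your remark that the Cholesky factorization of the Gaussian sexual covariance is legitimate because $(\sigma_f^S)^2-(\sigma_{fm}^S)^4/(\sigma_m^S)^2\ge 0$.

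There is, however, one verification you skip which the paper does first and which your argument needs: hypothesis (H2) is not only the structural identity \eqref{idtfH}, it also requires that $x\mapsto\mathcal G_x(H)$ be continuous on $\mathcal X$ and extend continuously to $\overline{\mathcal X}$. Your proposal asserts that tightness plus (H1) already yield that any limit point is a semimartingale with characteristics prescribed by $\mathcal G_x(H_{i,j})$, but that identification step is exactly where continuity is used: to pass to the limit in the approximate martingale problems one must know that $X\mapsto\int_0^\cdot\mathcal G_{X_s}(H)\,ds$ behaves well along weakly convergent subsequences, and the extension to the boundary of $\overline{\mathcal X}=[0,1]^2$ matters because limit points live in $\mathbb D([0,\infty),[0,1]^2)$, not in the open set where $\mathcal G^N_x$ was defined. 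The paper settles this in two lines --- continuity of $\mathcal G_x(H)$ comes from the local Lipschitz property of $g$ in {\bf (B3)}, and the continuous extension to $\overline{\mathcal X}$ from the domination \eqref{g-exp} in {\bf (B2)} --- so the gap is easily repaired, but as written your chain of implications attributes to (H1) a conclusion that in fact requires this part of (H2).
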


\me
\begin{proof} We need  to prove  that  
(H2) in \cite{BCM} (cf. Appendix A) is satisfied.  The continuity of $x\in \mathcal X \rightarrow \mathcal G_x(H)$ for $H\in \mathcal H$ is a direct consequence of the continuity of $g$, which is guaranteed by $({\bf B3})$. The continuous extension to $\overline{\mathcal X}$ is due to \eqref{g-exp}.
%We easily see that for any
%$H\in C^2_{b,0}$,  there exists a unique decomposition of the form
%$$ H(u)=H(u_1,u_2) = \sum_{a \in {1,2}}  \alpha_{a}(H) u_a  + \sum_{a,b\in \{1,2\}} \beta_{a,b}(H)  u_{a} u_{b}+\overline{H}(u),$$ where
% $\overline{H}(u)= o(\vert u \vert^2)$ is a continuous and bounded function,  for $a,b\in\{1,2\}$, $\alpha_{a}(H)$ and  $\beta_{a,b}(H)$ are real coefficients and $\beta$ is a symmetric matrix. 
 Using our definition of parameters $b,\sigma, K,\mu$, let us  now  check   that for   any $H\in \mathcal H$,
 \be
\label{idtfH}
 {\cal G}_{x}(H) =  \sum_{a\in\{1,2\}} \alpha_{a}(H) b_{a}(x)  + \sum_{a,b\in \{1,2\}}  \beta_{a,b}(H)c_{a,b}(x)   + \int_{V} \overline{H}(K(x,v)) \mu(dv), 
\ee
where  for  any $a,b\in \{1,2\}$,
$$c_{a,b}(x)=\sum_{i=1}^4\sigma_{a,i}(x)\sigma_{b,i}(x) +\int_{V} K_aK_b(x,v) \mu(dv)$$
and  $\alpha_{a}(H), \beta_{a,b}(H)$ are  the first and second order  coefficients of $H$ in its Taylor expansion and $\overline{H}=H- \sum_{a\in\{1,2\}} \alpha_{a}(H)  - \sum_{a,b\in \{1,2\}}  \beta_{a,b}(H)$ is the remaining term. 
We first observe that for   $H\in \mathcal H$, these   coefficients   are trivial.  There is a unique coefficient which is non zero for $H_{i,j}$ when $i+j\leq  2$ and it is equal to  $1$. Besides %\alpha_1(H_{1,0})=\alpha_2(H_{0,1)=0$ and $\beta_{a,b}(H_{1,0})=\beta_{a,b}(H_{0,1})=0$; $\alpha_a(H_{i,j})=0$ if  $i+j=2$ and \beta \alpha_2(H_{0,1)=0$ 
dor
  $i+j \geq 3$, $H_{i,j}=\overline{ H_{i,j}}$ and $\alpha_{.}(H_{i,j}) = \beta_{.,.}(H_{i,j})=0$.  %By identification  for $x=(e^{-y},e^{-z})\in (0,1]^2$, 
%& \ K_{2}(x, \theta, r) =  r\, \un_{\theta \leq 1}.\ee
Then using  the triplet $(V,\mu,K)$ introduced above, we directly  check that   $$\mathcal G_{x}(H_{i,j})=\int_V H_{i,j}(K(x,v))\mu(dv)=\int_V \overline{H}_{i,j}(K(x,v))\mu(dv)$$ and $H_{i,j}$ satisfies   \eqref{idtfH} for $i+ j \geq 3$.
Then we can  check that \eqref{idtfH} is satisfied for  $H_{2,0}$  :
\ben {\cal G}_x(H_{2,0})&=e^{-2y}y\left( \sigma_f^2+\int_0^{\infty} f_1(u)^2\nu_f(du)\right)+e^{-2y}g(y,z)\left((\sigma_f^S)^2+\int_{[0,\infty)^2} f_1(u_1)^2 \nu_S(du_{1},du_{2}) \right)\\
&= \sum_{i=1}^4 \sigma_{1i}^2(x)+\int_V K_1^2(x,v) \mu(dv) =c_{1,1}(x).
\een
 Similarly 
\ben {\cal G}_x(H_{0,2})&=e^{-2z}z\left( \sigma_m^2+\int_0^{\infty} f_1(u)^2\nu_m(du)\right)+e^{-2z}g(y,z)\left((\sigma_m^S)^2+\int_{[0,\infty)^2} f_1(u_2)^2\nu_S(du_{1},du_{2}) \right)\\
&= \sum_{i=1}^4\sigma_{2i}^2(x)+\int_V K_2^2(x,v) \mu(dv )
=c_{2,2}(x),
\een
 and   \eqref{idtfH} is satisfied for  $H_{0,2}$. Finally, 
the crossed term writes
\ben
{\cal G}_x(H_{1,1})&=&e^{y+z}g(y,z)\left((\sigma_{fm}^S)^2+\int_{[0,\infty)^2}  f_1(u_1)f_1(u_2)\nu_{S}(du_{1}, du_{2}) \right)\\
&=& \sum_{i=1}^4\sigma_{1i}(x)\sigma_{2i}(x)+\int_V K_1(x,v)K_2(x,v) \mu(dv)=c_{1, 2}(x)=c_{2,1}(x)
\een
%Setting 
%$$\sigma_{f1}(x)=e^{-y}\sqrt{y} \sigma_f, \quad \sigma_{m2}(x)=e^{-z}\sqrt{z} \sigma_m,$$
%$$ \sigma_{f3}(x)=e^{-y}\sqrt{g(y,z)}\sigma_{f3}, \ \  \sigma_{f4}(x)=e^{-y}\sqrt{g(y,z)}\sigma_{f4}, \ \   \sigma_{m3}(x)=e^{-z}\sqrt{g(y,z)}\sigma_{m3}, \ \  \sigma_{m4}(x)=e^{-z}\sqrt{g(y,z)}\sigma_{m4} $$ 
%the three previous identities are equivalent to 
%$$\sigma_{f3}^2+\sigma_{f4}^2= (\sigma_f^S)^2,\quad \sigma_{m3}^2+\sigma_{m4}^2= (\sigma_m^S)^2, \quad \sigma_{f3}\sigma_{m3}+\sigma_{f4}\sigma_{m4}=(\sigma_{fm}^S)^2 $$
 and \eqref{idtfH}  is proved for any $H\in \mathcal H$, recalling that the definition of $b$  guarantees the identity for 
 $i+j=1$. This proves that  (H2)  is satisfied 
and recalling that (H1) is already proved,  we can apply Theorem 2.4 in \cite{BCM},  see also 
 Theorem \ref {identification}  in Appendix . It ends the proof.   \end{proof}
  
  \bi
  
 \bi (III)   Let us now come back to the initial processes. 
 
 We write $V = V_{1}\cup V_{2}$, where $V_1=\{1,2,3\}\times [0,+\infty)\times(0,1]^2$ and $V_2=\{1,2,3\}\times [0,+\infty)\times(1,\infty)^2$,  to split small 
and large jumps.

\bi We have seen in Lemma \ref{LV} that 
\ben
X_{t}^1= \exp(-F_{t}) &=& X_{0}^1 + \int_{0}^t \overline{b_{1}}(X_{s}
) ds +\sum_{i=1}^4  \int_{0}^t \sigma_{1,i}(X_{s})dB^i_{s} \\
&&+ \int_{0}^t \int_{V_1}K_{1}(X_{s-},v)\tilde N(ds,dv)+ \int_{0}^t \int_{V_2}K_{1}(X_{s-},v) N(ds,dv),
\een
where
\be
\overline{b_1}(x)&=&e^{-y}y\left( -\alpha_f+\frac{\sigma_f^2}{2}-\int_{(0,1]}  \left( f_1(u)-h(u)\right)\nu_f(du)+\int_{(1,\infty)}  h(u)\nu_f(du)\right) \label{bbar}  \\
&&+e^{-y}g(y,z)\left(-\alpha_f^S+\frac{(\sigma_{f}^S)^2}{2}+\int_{(0,1]}  \left( h(u_1)-f_1(u_1)\right)\nu_S(du_1,du_2)+\int_{(1,\infty)}  h(u_1)\nu_S(du_1,du_2) \right). \nonumber  
\ee
Using It\^o's formula we get before the explosion time $\mathbb T_e$:
\ben
\log X_{t}^1= -F_{t} &=&-F_{0} +\int_{0}^t {1\over X_{s}^1} \overline{b_{1}}(X_{s}) ds - {1\over 2} \sum_{i=1}^4\ \int_{0}^t {\sigma_{1,i}^2(X_{s})\over (X_{s}^1)^2}ds +\sum_{i=1}^4  \int_{0}^t {\sigma_{1,i}(X_{s})\over X_{s}^1}dB^i_{s}\\
&&  + \int_{0}^t \int_{V_1} \Big\{\log\big(X_{s-}^1+K_{1}(X_{s-},v)\big) -\log(X_{s-}^1)  \Big\}\tilde N(ds,dv)\\
&& +  \int_{0}^t \int_{V_1} \Big\{\log\big(X_{s-}^1+K_{1}(X_{s-},v)\big) -\log(X_{s-}^1)  + K_{1}(X_{s-},v){1\over X_{s-}^1} \Big\}\mu(dv)ds.\\
&&+ \int_{0}^t \int_{V_2} \Big\{\log\big(X_{s-}^1+K_{1}(X_{s-},v)\big) -\log(X_{s-}^1)  \Big\} N(ds,dv).
\een
By definition of the coefficients introduced previously and by identification of the Brownian terms, we obtain
$${\sigma_{1,1}(X_{s})\over X_{s}^1}= \sqrt{F_{s}} \sigma_{f}\ ;\ \sigma_{1,2}(X_{s})=0 \ ;$$
$${\sigma_{1,3}(X_{s})\over X_{s}^1}= \sqrt{g(F_{s}, M_{s})}\sqrt{(\sigma^S_f)^2-  (\sigma^S_{fm})^4/(\sigma^S_m)^2}
 \ ; \ {\sigma_{1,4}(X_{s})\over X_{s}^1}= \sqrt{g(F_{s}, M_{s})}\frac{(\sigma^S_{fm})^2}{\sigma^S_m}.$$
We also recall from \eqref{K1}  that for any positive two-dimensional $x$, for $v=(k,\theta, u_{1},u_{2})$, 
$${K_1(x,v)\over x^1} = - \Big(f_{1}(u_{1}){\bf 1}_{k=1, \theta\le y} + f_{1}
(u_{1}){\bf 1}_{k=3, \theta\le g(y,z)} \Big).$$
%$${K_2(x,v)\over x^2} = -\Big(f_{1}(u_{2}){\bf 1}_{k=2, \theta\le z}  +f_{1}(u_{2}){\bf 1}_{k=3, \theta\le g(y,z)}\Big) .$$
Writing $N^f(ds, d\theta , du)=N(ds, \{1\}, d\theta, du, \{0\})$ and $N^S(ds,d\theta, du_1,du_2)=N(ds, \{3\}, d\theta, du_1, du_2)$, computation gives that \ben
&& \int_{V_1} \Big\{\log\big(X_{s-}^1+K_{1}(X_{s-},v)\big) -\log(X_{s-}^1)  \Big\}\tilde N(ds,dv)\\
&&\qquad \quad =\int_{[0,\infty)\times[0,1)} 1_{\theta\leq F_{s-}} u\tilde N^f(ds,du)+\int_{[0,\infty)\times[0,1)} 1_{\theta\leq g(F_{s-},M_{s-})} u_1\tilde N^S(ds,du_1,du_2).
\een
%We  note that $V= \{1,2,3\}\times [0,\infty)\times (0,\infty)^2$ and $v=(k, \theta, r, r')$. 
%We integrate $\  \Big\{\log\big(X_{s-}^1+K_{f}(X_{s-},v)\big) -\log(X_{s-}^1)  \Big\}\tilde N(ds,dv)$ on $k=1$ and obtain
%$$\int \log\Big\{{\bf 1}_{ \theta\le y} (1-e^{-r}) \Big\} \tilde N^1(d\theta, dr).$$
We obtain similarly the last jumps terms, without compensation.
Finally, the drift term of $F$ is given by the remaining terms.  Recall that $\mu$ is defined in \eqref{mu} and replacing $ \overline{b_{1}}(x)$  by its value given in \eqref{bbar}, it is equal to
\ben
&& -{1\over x^1} \overline{b_{1}}(x)  + {1\over 2} \sum_{i=1}^4 {\sigma_{1,i}^2(x)\over (x^1)^2}ds + \int_{(0,1]}  \left(h(u)- f_1(u)\right)\nu_f(du)+\int_{(1,\infty)}  h(u)\nu_f(du) \\
&&\qquad \quad +\int_{(0,1]}  \left( h(u_1)-f_1(u_1)\right)\nu_S(du_1,du_2)+\int_{(1,\infty)}  h(u_1)\nu_S(du_1,du_2) \, =\, \alpha_f y\, +\ \alpha_f^Sg(y,z).
\een
 This yields the expected equation for $F_t$. Following the same lines for $M_t=-\log(X_t^2)$ ends the proof of 
Proposition \ref{prop=tightness}.

\subsection{Uniqueness and convergence}
\label{sec:proofThe2}

\me Using the results of forthcoming Section \ref{sec:uniqueness}, we are able to prove the uniqueness needed for Theorem \ref{main} in a slightly more general framework.  Recall that the measure $$
\nu(du_1,du_2)=\nu_f(du_1)\delta_0(u_2)+\nu_m(du_2)\delta_0(u_1)+\nu_S(du_1,du_2)
$$
has been introduced in Assumption {\bf C}.

\bi 
\begin{prop} 
\label{the:main2}
 Let us assume that Hypotheses {\bf (B2)--(B4)}  are satisfied and
 $$\int_{[0,\infty)^2} (u^2_1+u^2_2)\wedge 1  \, \nu(du_1,du_2)<\infty$$
 Let us moreover assume that  that
 there exists $\varepsilon_0>0$ such that 
\be
\label{slight}
\liminf_{a\to 0} \,\,e^{\varepsilon_0 \left(\int_{A(a)} (u_1+u_2) \,\,  \nu(du_1,du_2)\right)}
\int_{B(a)}(u^2_1+u^2_2)\,  \nu(du_1,du_2)=0,
\ee
with $A(a)=\{(u_1,u_2):\, a<u_1\le 1, a<u_2\le 1\}, B(a)=\{(u_1,u_2):\, 0<u_1\le a, 0<u_2\le a\}$.

\me  Then, the stochastic differential system \eqref{system:main} has a unique strong (positive) solution up to the explosion time 
$$
\T_{\mathbf{e}}=\lim_{n\to \infty} \inf\{t\ge 0: F_t\ge n \hbox{ or } M_t\ge n\}.
$$
If $\nu$ satisfies the extra assumption $\int_{[0,\infty)^2} (u^2_1+u^2_2)\wedge (u_1+u_2) \, \nu(du_1,du_2)<\infty$,
then $\TT_{\mathbf{e}}=\infty$ a.s.
\end{prop}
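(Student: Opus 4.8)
The plan is to establish pathwise uniqueness by a Yamada--Watanabe argument adapted to jumps, and then to upgrade the weak solutions supplied by Proposition~\ref{prop=tightness} to a unique strong one. First I would localise: given two solutions $(F,M)$ and $(F',M')$ built on the same Brownian motions and Poisson measures with the same initial value, fix $n$ and stop at $\tau_n=\inf\{t:\max(F_t,M_t,F'_t,M'_t)\ge n\}$. On $[0,\tau_n]$ Assumption {\bf (B3)} makes $g$ Lipschitz with constant $L_n$ and bounded, and by {\bf (B2)} every jump intensity $\theta\le g(\cdot,\cdot)\le an+b$ is bounded, so all stochastic integrals below are genuine martingales. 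I would then introduce the Yamada--Watanabe functions: pick $a_k\downarrow 0$ with $\int_{a_k}^{a_{k-1}}dr/r=k$ and even $C^2$ functions $\phi_k$ increasing to $|\cdot|$ with $|\phi_k'|\le 1$ and $0\le\phi_k''(x)\le\frac{2}{k|x|}\mathbf 1_{a_k\le|x|\le a_{k-1}}$, write $\Delta F=F-F'$, $\Delta M=M-M'$, apply It\^o's formula to $\phi_k(\Delta F_{t\wedge\tau_n})+\phi_k(\Delta M_{t\wedge\tau_n})$, and take expectations so the martingale parts vanish.

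The resulting terms fall into four types. The drift contributions are Lipschitz: the linear terms and $\alpha^S_\bullet\big(g(F,M)-g(F',M')\big)$, controlled via $|g(F,M)-g(F',M')|\le L_n(|\Delta F|+|\Delta M|)$, are bounded by $C_n\,\E\int_0^{t}(|\Delta F_s|+|\Delta M_s|)\,ds$. The diagonal square--root diffusion terms are handled classically: $\tfrac12\phi_k''(\Delta F)\sigma_f^2(\sqrt{F}-\sqrt{F'})^2\le\tfrac12\sigma_f^2\phi_k''(\Delta F)|\Delta F|\le\sigma_f^2/k\to0$, and likewise for $M$. The single--coordinate jumps from $N^f,N^m$ are spectrally positive and, since $N^f$ does not touch $M$, they always push $\Delta F$ (resp.\ $\Delta M$) \emph{away} from $0$, with $\theta$--intensity exactly $|\Delta F_{s-}|$; following Fu--Li \cite{FL} and Li--Pu \cite{LP}, their compensators are bounded by $C\,\E\int|\Delta F_s|\,ds$ plus a quantity vanishing as $k\to\infty$.

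The genuine difficulty is the sexual reproduction. The Brownian coupling produces $\tfrac12\phi_k''(\Delta F)(\sigma_f^S)^2\big(\sqrt{g(F,M)}-\sqrt{g(F',M')}\big)^2$, and the $N^S$--jumps move $(\Delta F,\Delta M)$ simultaneously, both with intensity governed by $|g(F,M)-g(F',M')|$ rather than by a single coordinate; moreover the common sign $\varepsilon=\mathrm{sgn}\big(g(F_-,M_-)-g(F'_-,M'_-)\big)$ of these jumps need not equal $\mathrm{sgn}(\Delta F_-)$, so $N^S$ may push $\Delta F$ \emph{toward} $0$, destroying the monotonicity exploited above. Since $|g-g'|\le L_n(|\Delta F|+|\Delta M|)$ mixes the two components while $\phi_k''(\Delta F)$ concentrates on $|\Delta F|\asymp a_k$, the naive bound $\phi_k''(\Delta F)|\Delta M|$ is not small: this is the deep coupling obstacle flagged in the introduction, and I expect it to be the crux of the whole argument. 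The plan is to fix a threshold $a\in(0,1)$ and split the jump $(u_1,u_2)$ into the corner $B(a)=\{0<u_i\le a\}$ of tiny joint jumps, the corner $A(a)=\{a<u_i\le 1\}$ of moderate jumps, and the large jumps. On $B(a)$ I would use the second--order bound on the second difference of $\phi_k$ together with $(\sqrt g-\sqrt{g'})^2\le|g-g'|$; because the coupling prevents the usual gain of a $|\Delta F|$ factor, these contributions survive as an additive error controlled by $\int_{B(a)}(u_1^2+u_2^2)\,\nu$. On $A(a)$ I would use the first--order Lipschitz bound, which feeds a Gronwall rate proportional to $\int_{A(a)}(u_1+u_2)\,\nu$; the finitely many large jumps are treated directly. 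Assembling these into a Gronwall inequality for $\E[\phi_k(\Delta F_{t\wedge\tau_n})+\phi_k(\Delta M_{t\wedge\tau_n})]$, letting $k\to\infty$ (killing the diagonal diffusion and replacing $\phi_k$ by $|\cdot|$) leaves, on a short interval fixed so that the Gronwall constant does not exceed $\varepsilon_0$, a residual bounded by $\exp\!\big(\varepsilon_0\int_{A(a)}(u_1+u_2)\,\nu\big)\int_{B(a)}(u_1^2+u_2^2)\,\nu$, whose $\liminf_{a\to0}$ vanishes precisely by hypothesis \eqref{slight}. Hence $\E[|\Delta F|+|\Delta M|]=0$ on that interval; iterating along $[0,\tau_n]$ and letting $n\to\infty$ gives pathwise uniqueness up to $\T_{\mathbf e}$.

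Finally, combining this pathwise uniqueness with the weak existence up to $\T_{\mathbf e}$ from Proposition~\ref{prop=tightness}, the Yamada--Watanabe theorem for stochastic equations with jumps (weak existence and pathwise uniqueness together yield strong existence and uniqueness, in the spirit of \cite{FL,LP}) produces the unique strong solution up to $\T_{\mathbf e}$. For the last assertion, under the extra condition $\int(u_1^2+u_2^2)\wedge(u_1+u_2)\,\nu<\infty$ the large jumps have finite first moment, so using $g(F,M)\le aF+b$ and $g(F,M)\le aM+b$ I would derive a linear Gronwall bound for $\E[F_{t\wedge\tau_n}+M_{t\wedge\tau_n}]$ uniform in $n$; letting $n\to\infty$ shows the mean cannot blow up in finite time, whence $\T_{\mathbf e}=\infty$ almost surely.
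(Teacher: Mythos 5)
Your Yamada--Watanabe scheme breaks down exactly at the point you yourself flag as ``the crux'', and the remedy you propose does not close the gap. The splitting of the jump marks into $B(a)$, $A(a)$ and large jumps acts only on the $N^S$ integrals; it never touches the coupled Brownian term
$\tfrac12\,\phi_k''(\Delta F)\,c\,\bigl(\sqrt{g(F,M)}-\sqrt{g(F',M')}\bigr)^2$.
The only estimate available there is $(\sqrt{g}-\sqrt{g'})^2\le|g-g'|\le L_n(|\Delta F|+|\Delta M|)$, and while $\phi_k''(\Delta F)\,|\Delta F|\le 2/k$ vanishes, the cross term $\phi_k''(\Delta F)\,|\Delta M|$ is only bounded by $2|\Delta M|/(k a_k)$ on the support of $\phi_k''$. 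The resulting Gronwall rate $L_n/(k a_k)$ diverges as $k\to\infty$ (with the standard choice $a_{k-1}/a_k=e^k$ it diverges super-exponentially), i.e.\ much faster than the $O(1/k)$ error terms vanish, so taking $k\to\infty$ after Gronwall yields nothing. The same $k$-dependent explosion infects your treatment of $B(a)$: any ``second-order bound on the second difference of $\phi_k$'' carries the factor $\|\phi_k''\|_\infty\asymp (k a_k)^{-1}$, so the claim that those contributions are an additive error ``controlled by $\int_{B(a)}(u_1^2+u_2^2)\,\nu$'' \emph{uniformly in $k$} is unjustified, while the first-order bound gives $\int_{B(a)}(u_1+u_2)\,\nu$, which is not assumed finite. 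A telling symptom is that your argument never invokes the ellipticity hypothesis {\bf (B4)}, which is one of the stated assumptions of the proposition.

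The idea that is missing is precisely how the paper dispenses with Yamada--Watanabe functions altogether (the proposition is proved as an adaptation of Theorem \ref{the:e1}). One localizes not only from above but also \emph{away from the boundary}, stopping at $T_{n,\delta}$, the first time a coordinate leaves $[\delta,n]$; there, {\bf (B4)} (in the form {\bf (F5)}) gives a lower bound $\zeta>0$ on the diffusion coefficients, so that $|\sqrt{a}-\sqrt{b}|\le |a-b|/(2\sqrt{\zeta})$ and \emph{all} square-root coefficients, including $\sqrt{g}$, become Lipschitz. One then runs a direct $L^1$ estimate on $R_t=\EE\bigl((\Delta X)^*_{t\wedge T_{n,\delta}}+(\Delta Y)^*_{t\wedge T_{n,\delta}}\bigr)$ using Burkh\"older--Davis--Gundy with $p=1$; the small jumps below level $a$ are controlled not by a Taylor expansion but by the Lenglart--L\'epingle--Pratelli inequality combined with the ratio identity $\EE(\langle\Gamma^a,\Gamma^a\rangle^{1/2})=\sqrt{W(a)/W}\;\EE(\langle\Gamma,\Gamma\rangle^{1/2})$, which produces a \emph{multiplicative} factor $\beta(a)\propto\sqrt{W(a)}$ plus a bounded additive term, while jumps above $a$ feed the Gronwall rate $\gamma(a)\propto\int_a^\infty h\,d\lambda$. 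Choosing $t_0$ so small that the multiplicative terms are at most $1/2$ and $8\K(n)t_0\le\varepsilon_0$, and then letting $a\downarrow0$ along the liminf sequence of \eqref{slight}, forces $R_t=0$; so your guess about where \eqref{slight} enters is structurally right, but the mechanism producing that residual cannot be reproduced within your $\phi_k$ framework. Finally, because the processes are stopped at $T_\delta$, uniqueness past $T_0=\lim_{\delta\downarrow0}T_\delta$ requires an extra step your proposal does not anticipate: $T_0$ is predictable (hence no jump at $T_0$), and Proposition \ref{prop:1} shows the boundary is absorbing. Your closing arguments (Yamada--Watanabe theorem for strong existence, Gronwall for non-explosion) are sound, but they cannot repair the core of the uniqueness proof.
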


\me Note that  Assumption {\bf C} obviously  implies \eqref{slight}. Observe also  that under Assumptions {\bf (B2)--(B4)}, $g$ is locally Lipschitz with linear growth and satisfies the ellipticity assumption.

\me The proof of Proposition \ref{the:main2} is a simple adaptation
of the proof of uniqueness of the next section. The measure that plays the role of $\lambda$ in Section  \ref{sec:uniqueness}, is $\nu$.
The representation of jumps  in  \eqref{system:main} relies on the three Poisson point measures $N^f, N^m, N^S$. These measures  can be gathered in a single Poisson point measure for convenience.

%\me  Then,
%under the hypothesis $\int_{\RR^2_+}(u_1+u_2) \, \nu(du_1,du_2)<+\infty$, similarly
%as in  Proposition \ref{pro:2}, the system \eqref{system:main} has no explosion in  finite time. On the
%other hand, under the same hypotheses, if the initial condition is $F_0=0$, we prove that the unique solution
%is $F_t=0, M_t=M_0$ for all $t$, see Proposition \ref{prop:1}.
%The rest of the proof is similar to the one of Theorem \ref{the:e1}.

\me Finally, combining Propositions \ref{prop=tightness} and \ref{the:main2},  we have proved the convergence stated in Theorem \ref{main}.

\section{Pathwise uniqueness}
\label{sec:uniqueness}

We have seen previously that the main technical problems to prove uniqueness for the system \eqref{system:main}, come from 
the presence of the square root
as coefficient on the Brownian terms, the presence of singular coefficients for the compensated 
Poisson terms and the fact that this is 
a two-dimensional system. In this section,  we present a simpler version of this system by focusing on the sexual coupling term.  This system contains all the
difficulties mentioned, improving the known results in the literature. We do this to keep notation as simple as possible.  Without additional complexity, we actually consider here a more general diffusion and jump terms.

\subsection{The system of equations}
\label{subsec:general}

\me We study the uniqueness problem  for the following system of stochastic differential equations. This system has a form similar to the one obtained in \eqref{system:main} and contains all its difficulties. It is given by 
 
\begin{eqnarray}
\label{eq:e1}
&X_t&=x_0 +\int_0^t b_1(X_s,Y_s) ds + \int_0^t \sqrt{\ell_1(X_s,Y_s)} \, dB^1_s\nonumber\\&&+ 
\int_0^t \int_{\RR_{+}^2}  \ind_{\theta\le \kappa_1(X_{s-},Y_{s-})} p_1(X_{s-},Y_{s-}) h(z)  \,  \nt^1(ds, d\theta,dz)\nonumber\\
&&+\int_0^t \int_{\RR_{+}^2}  \ind_{\theta\le \kappa_1(X_{s-},Y_{s-})} p_1(X_{s-},Y_{s-}) (z -h(z)) \, N^1(ds, d\theta,dz);\nonumber\\
&Y_t&=y_0 +\int_0^t b_2(X_s,Y_s) ds + \int_0^t \sqrt{\ell_2(X_s,Y_s)} \, dB^2_s \nonumber\\ &&+ 
\int_0^t \int_{\RR_{+}^2} \ind_{\theta\le \kappa_2(X_{s-},Y_{s-})} p_2(X_{s-},Y_{s-}) h(z) \,  \nt^2(ds, d\theta,dz)\nonumber\\
&&+\int_0^t \int_{\RR_{+}^2}  \ind_{\theta\le \kappa_2(X_{s-},Y_{s-})} p_2(X_{s-},Y_{s-}) (z -h(z)) \, N^2(ds, d\theta,dz).
\end{eqnarray}
The processes  $B^1$ and $B^2$ are Brownian motions  and $N^1$ and $N^2$ are Poisson point  measures on 
$(\mathbb{R}_{+ })^3$  with intensities 
$ds\, d\theta\, \lambda_{1}( dz)$ and $ds\, d\theta\, \lambda_{2}( dz)$, not necessarily independent. 

\me In what follows, we will denote
$$
\lambda(dz) = \lambda_{1}(dz) + \lambda_{2}(dz),
$$
and throughout this section we assume that $\lambda$ satisfies the hypothesis 
\begin{enumerate}[{\bf (F0)}]
\item $\int_0^\infty (z^2 \wedge 1) \, \lambda(dz)<\infty.$
\end{enumerate}

\bi The coefficient are defined   on $\RR_{+}^2$ 
and for $i=1,2$ the hypotheses about these  coefficients are
\begin{enumerate}[{\bf (F1)}]
\item $b_i,\ell_i,\kappa_i,p_i$ are locally Lipschitz on  $\RR_{+}\times\RR_{+}$. We also assume
that for all $z\in \RR_+$ it holds $b_i(0,z)=\ell_i(0,z)=\kappa_i(0,z)=b_i(z,0)=\ell_i(z,0)=\kappa_i(z,0)=0$.

\item $\ell_i,\kappa_i,p_i$ are nonnegative, and $p_i$ are strictly positive in every compact set  of $ [0,\infty)^2$.

\item $b_i,\ell_i,\kappa_i$ have linear growth and $p,q$ are bounded. We denote
by $\mathbf{L}, \mathbf{A}$ two constants such that
$$
|b_1(x,y)|+|b_2(x,y)|+\ell_1(x,y)+\ell_2(x,y)+\kappa_1(x,y)+\kappa_2(x,y)\le \mathbf{L}(x+y)+ \mathbf{A}.
$$
We assume without loss of generality that $p_i$ are bounded by 1.

\item The function $h\in C_{b}( \RR_{+}, \RR_{+})$ and it satisfies $h(z)=z$ in a neighborhood of $0$.
\end{enumerate}

\bi We point out the following facts that are direct consequences of {\bf (F0)} and {\bf (F4)}.
\begin{enumerate}
\item $\int_0^1 z^2 \, \lambda(dz)<\infty$, $\int_0^1 h^2(z) \, \lambda(dz)<\infty$, $\int_0^1 |z-h(z)| \, \lambda(dz)<\infty$
and $\lambda([1,\infty))<\infty$.
\item  $\int_1^\infty z \, \lambda(dz)<\infty$ if and only if $\int_0^\infty |z-h(z)| \, \lambda(dz)<\infty$.
\item $\int_0^\infty z^2\wedge z \, \lambda(dz)<\infty$ if and only if $\int_0^\infty h^2(z)\, \lambda(dz)<\infty$ and
$\int_0^\infty |z-h(z)|\, \lambda(dz)<\infty$.
\end{enumerate}

\me Note also that because of {\bf (F1)},  $(0,0)$ is an absorbing point and any solution issued from $\mathbb{R}_{+}^2$ stays in $\mathbb{R}_{+}^2$. 

\bi In some of the computations below, we shall use Burkh\"older-Davis-Gundy inequality with $p=1$, which provides
a finite constant $\mathbf{C}_1$, such that \be \label{BDG}\EE\left(\sup\limits_{s\le \tau} |M_s| \right)
 \le \mathbf{C}_1 \, \EE([M,M]_{\tau}^{1/2})\ee for any local martingale $M$, and any stopping
 time $\tau$ (cf. Dellacherie-Meyer \cite{DM} VII.92). We also need to use similar inequalities relating the supremum of a local martingale
 and its predictable quadratic variation. Namely, there exists a constant 
 $\overline{\mathbf{c}}_1>0$, such that if the jumps of $M$ are bounded 
in absolute value by $\pmb\Delta$ then (see Lenglart-L\'epingle-Pratelli \cite{Lenglart})
\begin{eqnarray}
\overline{\mathbf{c}}_1\EE(\langle M,M\rangle_{\tau}^{1/2})&\le& \EE\left(\sup\limits_{s\le \tau} |M_s| \right)+\pmb\Delta;\nonumber\\
\EE([M,M]_{\tau}^{1/2})&\le& 3\, \EE(\langle M,M\rangle_{\tau}^{1/2}).\label{ineq:BDG-Lenglart}
\end{eqnarray}
Note that if $M_{t} = \int_{0}^t \int_0^\infty H_{s-}(z) \tilde N(ds, dz)$ where $N$ is a Poisson point measure with intensity $\nu$, then $[M,M]_{\tau}= \int_{0}^t \int_0^\infty H^2_{s-}(z)  N(ds, dz)$ and 
$\langle M,M\rangle_{\tau} = \int_{0}^{\tau} \int_0^\infty H^2_{s-}(z)  \nu(ds, dz)$.

\bi Our first result is an a priori bound for system \eqref{eq:e1} and  we
set
$$X^*_{t}= \sup_{s\le t} |X_{s}|.$$

\begin{prop} 
\label{pro:2} Assume that $(x_{0}, y_{0}) \in \mathbb{R}_{+}^2$.
Assume that $\int_0^\infty (z^2\wedge z) \, \lambda(dz)<\infty$ and {\bf (F1)}--{\bf (F4)} hold. 
If $(X,Y)$ is a nonnegative solution of 
\eqref{eq:e1} then, the following a priori estimates hold for all $t>0$
$$
\EE(X_t+Y_t)\le (x_0+y_0+a\mathbf{A}t)e^{a \mathbf{L}\, t}
$$
and
$$
\EE(X^*_t+Y^*_t)\le \big(x_0+y_0+D+(D+a)\mathbf{A}\, t\big) e^{(D+a)\mathbf{L}\, t},
$$
where $\mathbf{L}$ and $\mathbf{A}$ are given in {\bf (F3)},  $a= 2+\int_0^\infty |z-h(z)|\, \lambda(dz)$ and 

$D=\mathbf{C}_1\left(2+\sqrt{\int_0^\infty  h^2(z) \, \lambda_1(dz)}+\sqrt{\int_0^\infty h^2(z) \, \lambda_2(dz)}\right)$.
\end{prop}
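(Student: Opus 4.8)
The plan is to prove both estimates by a localization-plus-Gronwall scheme applied to the sum process $S_t:=X_t+Y_t$, using the moment hypothesis $\int_0^\infty(z^2\wedge z)\,\lambda(dz)<\infty$, which (by the equivalences recorded after \textbf{(F4)}) guarantees $\int_0^\infty h^2(z)\,\lambda(dz)<\infty$ and $\int_0^\infty|z-h(z)|\,\lambda(dz)<\infty$. I would first introduce the stopping times $\tau_n=\inf\{t\ge 0:\, X_t+Y_t\ge n\}$. Stopping at $\tau_n$ turns the stochastic integrals in \eqref{eq:e1} into genuine martingales and keeps all the expectations below finite, and the claimed bounds are recovered by letting $n\to\infty$ via Fatou's lemma and monotone convergence.

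For the first estimate I would take expectations in \eqref{eq:e1} evaluated at $t\wedge\tau_n$ and summed over the two coordinates. The Brownian integrals and the compensated jump integrals $\nt^i$ then vanish in mean, so the only surviving terms beyond $x_0+y_0$ are the drifts $b_i$ and the compensators of the uncompensated jump parts, i.e. $\EE\int_0^{t\wedge\tau_n}\kappa_i(X_s,Y_s)\,p_i(X_s,Y_s)\big(\int_0^\infty(z-h(z))\,\lambda_i(dz)\big)\,ds$. Using \textbf{(F2)}--\textbf{(F3)} (so $0\le p_i\le 1$ and $|b_1|+|b_2|+\kappa_1+\kappa_2\le\mathbf{L}S+\mathbf{A}$) together with $\int_0^\infty|z-h(z)|\,\lambda(dz)=a-2$, the integrand of $\EE(S_{t\wedge\tau_n})$ is bounded by $a\big(\mathbf{L}\,\EE(S_{s\wedge\tau_n})+\mathbf{A}\big)$. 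Gronwall's lemma gives $\EE(S_{t\wedge\tau_n})\le(x_0+y_0+a\mathbf{A}t)e^{a\mathbf{L}t}$, and Fatou yields the first bound.

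For the supremum estimate I would take the supremum over $s\le t\wedge\tau_n$ first and then expectations. The finite-variation terms are handled exactly as above, now against $\EE(S^*_{s\wedge\tau_n})$ (using $X_s,Y_s\le S^*_s$), and contribute the factor $a$. For the martingale parts I would apply the Burkh\"older--Davis--Gundy inequality \eqref{BDG}: each Brownian integral $\int_0^\cdot\sqrt{\ell_i}\,dB^i$ contributes $\mathbf{C}_1\,\EE\big(\int_0^{t\wedge\tau_n}\ell_i\,ds\big)^{1/2}$, and each compensated Poisson integral contributes $\mathbf{C}_1\,\EE\big(\int_0^{t\wedge\tau_n}\!\int\ind_{\theta\le\kappa_i}p_i^2\,h^2(z)\,N^i\big)^{1/2}$. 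Using Jensen's inequality $\EE\sqrt{\,\cdot\,}\le\sqrt{\EE\,\cdot\,}$, the compensation formula, $p_i\le 1$, $\kappa_i\le\mathbf{L}S^*+\mathbf{A}$ and $\int_0^\infty h^2(z)\,\lambda_i(dz)<\infty$, all four martingale suprema are bounded by a multiple of $W^{1/2}$, where $W:=\int_0^t\big(\mathbf{L}\,\EE(S^*_{s\wedge\tau_n})+\mathbf{A}\big)\,ds$; collecting the constants reproduces exactly $D=\mathbf{C}_1\big(2+\sqrt{\int_0^\infty h^2\,\lambda_1(dz)}+\sqrt{\int_0^\infty h^2\,\lambda_2(dz)}\big)$, with the summand $2$ coming from the two Brownian motions. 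Linearizing the concave contribution via $\sqrt{W}\le 1+W$ and combining with the finite-variation part gives $\EE(S^*_{t\wedge\tau_n})\le(x_0+y_0+D)+(D+a)W$, so Gronwall with rate $(D+a)\mathbf{L}$, followed by Fatou, produces the second bound.

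The genuinely delicate points are twofold. First, to run BDG one needs the a priori finiteness $\EE(S^*_{t\wedge\tau_n})<\infty$: before $\tau_n$ the process is bounded by $n$, but a single jump can overshoot that level, so one must invoke $\int_1^\infty z\,\lambda(dz)<\infty$ (equivalently $\int_0^\infty|z-h(z)|\,\lambda(dz)<\infty$) to control the expected size of this terminal jump. Second, the honest bookkeeping of the square-root terms—converting $\EE(\sqrt{\,\cdot\,})$ into the linear form required by Gronwall while assembling the precise constant $D$—is the main technical step; once past it, the remainder is a routine use of the compensation formula and Gronwall's inequality.
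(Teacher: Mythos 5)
Your proposal is correct and follows essentially the same route as the paper's proof: localization by stopping times, the a priori finiteness argument to handle the possible overshooting jump at the stopping time (via $\int_0^\infty |z-h(z)|\,\lambda(dz)<\infty$), expectation plus the compensation formula and Gronwall for the first bound, then BDG \eqref{BDG}, Jensen/concavity of the square root, the linearization $\sqrt{a}\le 1+a$ and Gronwall again for the supremum bound, with Fatou to remove the localization. The only cosmetic differences are that you localize with the sum $X+Y$ rather than coordinatewise stopping times and you track $\EE(S^*_{s\wedge\tau_n})$ instead of $\EE(X_{s\wedge S_n}+Y_{s\wedge S_n})$ inside the Gronwall integrand, both of which are equivalent and yield the same constants $a$ and $D$.
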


\smallskip
\begin{proof} We consider $S^X_n=\inf\{t>0: X_t\ge n\}, S^Y_n=\inf\{t>0: Y_t\ge n\}$
and $S_n=S^X_n\wedge S^Y_n$.
Then, we have
\begin{eqnarray}
\EE(X_{t\wedge S_n})&=&x_0+\int_0^t \EE(b_1(X_{s},Y_{s}), s<S_n) \, ds \label{eq:e2}\\
&&+\int_0^\infty (z-h(z)) \, \lambda_1(dz)\, \int_0^t \EE(\kappa_1(X_{s},Y_{s}) p_1(X_{s},Y_{s}), s<S_n ) \, ds \nonumber \\
%&\le &x_0 +(1+ \int_0^\infty |z-h(z)| \, \lambda_1(dz))\mathbf{A}t+
%\big(1+ \int_0^\infty |z-h(z)|  \, \lambda_1(dz)\big)\mathbf{L} \int_0^t \EE(X_s+Y_s, s<S_n) \, ds \nonumber\\
&\le& x_0 +\big(1+ \int_0^\infty |z-h(z)| \, \lambda_1(dz)\big)\mathbf{A}t+\\
&&\qquad \qquad
\big(1+ \int_0^\infty |z-h(z)|  \, \lambda_1(dz)\big)\mathbf{L} \int_0^t \EE(X_{s\wedge S_n}+
Y_{s\wedge S_n}) \, ds. \nonumber
\end{eqnarray}
Proceeding similarly for $Y$, this implies that
$$
\begin{array}{l}
\EE(X_{t\wedge S_n}+Y_{t\wedge S_n})\le x_0+y_0 +a\mathbf{A} t+
a\mathbf{L}\int_0^t \EE(X_{s\wedge S_n}+ Y_{s\wedge S_n}) \, ds.
\end{array}
$$
To apply Gronwall's inequality, we need to bound $\EE(X_{t\wedge S_n}+Y_{t\wedge S_n})$. This is not direct because the processes may jump at $S_{n}$.  

The first lines of \eqref{eq:e2} show that 
$\EE(X_{t\wedge S_n})\le x_0+ t(2\mathbf{L} \,n+\mathbf{A})(1+\int_0^\infty |z-h(z)|  \, \lambda_1(dz))$, proving
that for all $t$ we have $\EE(X_{t\wedge S_n})<\infty$. A similar conclusion holds for $Y$. 

\medskip
From Gronwall's inequality, using   that $X,Y$ are nonnegative and they have only
upward jumps, we obtain
$$
n\, \PP(S_n<t)\le \EE(X_{t\wedge S_n}+Y_{t\wedge S_n})\le (x_0+y_0 +a\mathbf{A} t) e^{a\mathbf{L}t},
$$
proving that $S_n \to \infty$ a.s., as $n\to \infty$. Now, Fatou's lemma shows that
\be
\label{majesp}
\EE(X_{t}+Y_{t})\le \liminf\limits_{n\to \infty} \EE(X_{t\wedge S_n}+Y_{t\wedge S_n})
\le (x_0+y_0 +a\mathbf{A} t) e^{a\mathbf{L} t}
\ee
which proves the first part of the lemma.
Besides,
\begin{eqnarray*}
\EE(X^*_{t\wedge S_n})& \le &x_0+\int_0^t \EE(|b_1(X_{s},Y_{s})|, s<S_n) \, ds\\
&& +\int_0^\infty |z-h(z)|  \, \lambda_1(dz)
\int_0^t \EE(\kappa_1(X_{s},Y_{s}) p_1(X_{s},Y_{s}), s<S_n ) \, ds\\
&&+\EE\left(\sup\limits_{s\le t\wedge S_n} \left|\int_0^s 
\sqrt{\ell_1(X_s,Y_s)} \, dB^1_s\right|\right)\\
&&+\EE\left(\sup\limits_{s\le t\wedge S_n} \left|\int_0^s
\int_{[0,\infty)^2} \ind_{\theta\le \kappa_1(X_{s-},Y_{s-})}  p_1(X_{s-},Y_{s-}) h(z) \,  \nt^1(ds,d\theta,dz)\right|\right)
\end{eqnarray*}
Using  inequality \eqref{majesp} for the two first terms of the right hand side above and 
 \eqref{BDG} for the two last terms together with Cauchy Schwarz
for the jump term, we obtain
%&\le& x_0+\big(1+\int_0^\infty |z-h(z)|  \, \lambda_1(dz)\big)
%\left[\mathbf{A}t +\mathbf{L} \int_0^t \EE(X_{s\wedge S_n }+Y_{s\wedge S_n }) \, ds\right]\\
%\\
%&&+\mathbf{C}_1\EE\left(\left(\int_0^{t\wedge S_n} \ell_1(X_s,Y_s)\, ds\right)^{1/2}\right)\\
%\\&&+\mathbf{C}_1 \EE\left(\left(\int_0^{t\wedge S_n} \int_{(0,\infty)^2} \ind_{\theta\le \kappa_1(X_{s-},Y_{s-})}  p^2(X_{s-},Y_{s-}) h^2(z)  \,  N^1(ds,d\theta,dz)\right)^{1/2}\right)\\
\begin{eqnarray*}
\EE(X^*_{t\wedge S_n})
&\le&  x_0+\big(1+\int_0^\infty |z-h(z)|  \, \lambda_1(dz)\big)
\left[\mathbf{A}t +\mathbf{L} \int_0^t \EE(X_{s\wedge S_n }+Y_{s\wedge S_n }) \, ds\right]\\
&&+\mathbf{C}_1\left(\EE\left(\int_0^{t\wedge S_n} \ell_1(X_s,Y_s)\, ds\right)\right)^{1/2}\\
&&+\mathbf{C}_1 \left(\EE\left(\int_0^{t\wedge S_n} \int_{[0,\infty)^2} \ind_{\theta\le \kappa_1(X_{s-},Y_{s-})}  p_1^2(X_{s-},Y_{s-}) h^2(z) \, N^1(ds,d\theta,dz)\right)\right)^{1/2},
\end{eqnarray*}
where we have used that the square root is a concave function. Therefore,
$$
\begin{array}{lll}
\EE(X^*_{t\wedge S_n})&\hspace{-0.3cm}\le x_0&\hspace{-0.3cm}+
\big(1+\int_0^\infty |z-h(z)|  \, \lambda_1(dz)\big)
\left[\mathbf{A}t +\mathbf{L} \int_0^t \EE(X_{s\wedge S_n }+Y_{s\wedge S_n }) \, ds\right]\\
\\
&&\hspace{-0.3cm}+\mathbf{C}_1\left(1+\sqrt{\int_0^\infty h^2(z) \, \lambda_1(dz)}\right)
\left(\mathbf{A}t+\mathbf{L} \int_0^t \EE(X_{s\wedge S_n }+Y_{s\wedge S_n })\, ds\right)^{1/2}\\
\\
&\hspace{-0.3cm}\le x_0&\hspace{-0.3cm}+d_1+d_2\mathbf{A}t+
d_2\, \mathbf{L}\int_0^t \EE(X_{s\wedge S_n }+Y_{s\wedge S_n }) \, ds,
\end{array}
$$
wiith $d_1=\mathbf{C}_1\left(1+\sqrt{\int_0^\infty h^2(z) \, \lambda_1(dz)}\right)$, $d_2=d_1+1+
\int_0^\infty |z-h(z)| \, \lambda_1(dz)$ (here we have used that $\sqrt{a}\le 1+a$). 
The result follows from Gronwall's inequality.
\end{proof}

\bi
\begin{prop} 
\label{prop:1} Let $(x_{0}, y_{0}) \in \mathbb{R}_{+}^2$.
We assume that $(X,Y)$ is a nonnegative solution of the system \eqref{eq:e1}, such that
$x_0=0$, and $\int_0^\infty (z^2\wedge z) \, \lambda(dz)<\infty$. 
If ${\bf (F1)}-{\bf (F4)}$ hold, then for all $t\ge 0$, we have $X_t=0, Y_t=y_0$. A similar conclusion holds if $y_0=0$.
\end{prop}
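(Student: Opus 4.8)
The plan is to exploit the boundary behaviour encoded in \textbf{(F1)}: since $b_1,\ell_1,\kappa_1$ all vanish on the axis $\{x=0\}$ and are locally Lipschitz, each is dominated by a constant multiple of $X$ on compact sets. Working \emph{in expectation}, where the Brownian and compensated Poisson parts disappear, this linear domination will force $X$ to remain at $0$; once $X\equiv 0$, the same boundary vanishing kills every coefficient of the $Y$-equation, leaving $Y_t=y_0$.

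First I would introduce the stopping times $S_n=S^X_n\wedge S^Y_n$ from Proposition \ref{pro:2}, so that on $[0,S_n)$ the pair $(X,Y)$ stays in the compact square $[0,n]^2$. There the coefficients are bounded, so both the Brownian integral $\int_0^{t\wedge S_n}\sqrt{\ell_1(X_s,Y_s)}\,dB^1_s$ and the compensated jump integral $\int_0^{t\wedge S_n}\!\int \ind_{\theta\le \kappa_1}p_1 h(z)\,\nt^1$ are genuine mean-zero martingales, the latter because $\int_0^\infty h^2(z)\,\lambda_1(dz)<\infty$ (a consequence of \textbf{(F0)} and \textbf{(F4)}) and $\kappa_1 p_1$ is bounded on $[0,n]^2$. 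Taking expectations in the stopped $X$-equation then yields, exactly as in the first lines of \eqref{eq:e2},
$$
\EE(X_{t\wedge S_n})=x_0+\int_0^t \EE\big(b_1(X_s,Y_s)\,\ind_{s<S_n}\big)\,ds+\Big(\int_0^\infty (z-h(z))\,\lambda_1(dz)\Big)\int_0^t \EE\big(\kappa_1 p_1(X_s,Y_s)\,\ind_{s<S_n}\big)\,ds,
$$
where the constant $\int_0^\infty(z-h(z))\,\lambda_1(dz)$ is finite since $\int_0^\infty(z^2\wedge z)\,\lambda(dz)<\infty$.

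Next I would invoke local Lipschitz continuity together with the vanishing on the axis: because $b_1(0,\cdot)=\kappa_1(0,\cdot)=0$, there is a constant $C_n$ (the Lipschitz constant on $[0,n]^2$) with $|b_1(x,y)|\le C_n x$ and $\kappa_1(x,y)\le C_n x$ for $(x,y)\in[0,n]^2$. Using $p_1\le 1$, $X\ge 0$, and writing $\phi_n(t)=\EE(X_{t\wedge S_n})$ — finite for each $n$ by the bound $\EE(X_{t\wedge S_n})\le x_0+t(2\mathbf{L}n+\mathbf{A})(1+\int_0^\infty|z-h(z)|\,\lambda_1(dz))$ noted in the proof of Proposition \ref{pro:2} — the identity above gives
$$
\phi_n(t)\le x_0+K_n\int_0^t \phi_n(s)\,ds,\qquad K_n=C_n\Big(1+\int_0^\infty |z-h(z)|\,\lambda_1(dz)\Big).
$$
With $x_0=0$, Gronwall's inequality forces $\phi_n(t)=0$ for all $t$ and all $n$. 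Since $X\ge 0$, this means $X_{t\wedge S_n}=0$ a.s.\ for each fixed $t$; evaluating along a countable dense set of times and using right-continuity of paths gives $X_{t\wedge S_n}=0$ for all $t$, and letting $n\to\infty$ (recall $S_n\to\infty$ a.s.\ by Proposition \ref{pro:2}) yields $X\equiv 0$.

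Finally, substituting $X\equiv 0$ into the $Y$-equation, each coefficient $b_2(0,Y_s)$, $\ell_2(0,Y_s)$, $\kappa_2(0,Y_s)$ vanishes by \textbf{(F1)}, so every integral there is identically zero and $Y_t=y_0$ for all $t$; the case $y_0=0$ follows by symmetry. The only genuinely delicate point is the passage to the expectation identity, which is why stopping at $S_n$ to bound $\ell_1$ and $\kappa_1$ (and hence secure the martingale property of the stochastic integrals) is indispensable. I expect this to be the main obstacle, but note that the non-Lipschitz square root never actually interferes: in expectation only the drift and the uncompensated jump part survive, and both are linearly dominated by $X$ near the axis.
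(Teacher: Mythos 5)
Your proof is correct, and its core mechanism coincides with the paper's: take expectations so that the Brownian and compensated-Poisson terms vanish, use {\bf (F1)} together with local Lipschitz continuity to dominate $|b_1|$ and $\kappa_1 p_1$ by a constant times $X$, and apply Gronwall to $t\mapsto\EE(X_{t\wedge\cdot})$ starting from $x_0=0$. Where you genuinely diverge is in the localization and the globalization. The paper stops at $U_\epsilon=\inf\{t>0:\,X_t\ge\epsilon \hbox{ or } Y_t\ge y_0+\epsilon\}$, runs the Gronwall argument with the Lipschitz constant of the small box $[0,\epsilon]\times[0,y_0+\epsilon]$ (getting $X_{t\wedge U_\epsilon}=0$ and hence $Y_{t\wedge U_\epsilon}=y_0$), and then shows $U_\epsilon=\infty$ by a right-continuity contradiction: on $\{U_\epsilon<\infty\}$ the process sits strictly inside the box at time $U_\epsilon$, contradicting the definition of the exit time. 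You instead stop at the level-$n$ times $S_n$, run Gronwall on $[0,n]^2$ (the $n$-dependent Lipschitz constant is harmless since the Gronwall bound is $x_0e^{K_nt}=0$), and let $n\to\infty$ using the non-explosion $S_n\to\infty$ a.s.\ supplied by Proposition \ref{pro:2}; only afterwards do you treat $Y$, by substituting $X\equiv 0$ into its equation so that all its coefficients vanish by {\bf (F1)}. Both routes are sound. Yours is slightly more streamlined — no contradiction step, and the $Y$-part becomes trivial once $X\equiv 0$ holds globally — but it leans on Proposition \ref{pro:2}, hence on the hypothesis $\int_0^\infty(z^2\wedge z)\,\lambda(dz)<\infty$ through non-explosion, whereas the paper's argument is purely local around $(0,y_0)$ and would survive in situations where one only controls the solution near the starting point. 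Since that integrability hypothesis is part of the statement of Proposition \ref{prop:1}, your reliance on it is legitimate here.
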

\begin{proof} Consider  $U_\epsilon=\inf\{t>0: X_t\ge \epsilon \hbox{ or } Y_t\ge y_0+\epsilon\}$ where $
\epsilon>0$.  Note that $U_\epsilon>0$ a.s. since the process $(X,Y)$ is right-continuous. 
As in the proof of the previous proposition and using $b_1(0,.)=\kappa_1(0,.)=0$, we have the following estimate 
\begin{eqnarray}
\EE(X_{t\wedge U_\epsilon})&\le& \int_0^t \EE(|b_1(X_{s},Y_{s})|, s<U_\epsilon) \, ds \nonumber \\
&&\qquad +\int_0^\infty |z-h(z)| \, \lambda_1(dz)\, \int_0^t \EE(\kappa_1(X_{s},Y_{s}) p(X_{s},Y_{s}), s<U_\epsilon ) \, ds \nonumber \\
&=&\int_0^t \EE(|b_1(X_{s},Y_{s})-b_1(0,Y_{s})|, s<U_\epsilon) \, ds \nonumber \\
&& \qquad +\int_0^\infty |z-h(z)| \, \lambda_1(dz)\, \int_0^t \EE(\kappa_1(X_{s},Y_{s}) p_1(X_{s},Y_{s})-
\kappa_1(0,Y_{s}) p_1(0,Y_{s}), s<U_\epsilon ) \, ds \nonumber \\
&\le &R\big(1+\int_0^\infty |z-h(z)| \, \lambda_1(dz)\big)\, \int_0^t \EE(X_{s},s<U_\epsilon) \, ds \nonumber \\
&\le &R\big(1+\int_0^\infty |z-h(z)| \, \lambda_1(dz)\big)\, \int_0^t \EE(X_{s\wedge U_\epsilon}) \, ds, \nonumber
\end{eqnarray}
where $R$ is a Lipschitz constant for $b_1, \kappa_1p_1$ on $[0,\epsilon]\times[0,y_0+\epsilon]$.
Gronwall's inequality gives that $\EE(X_{t\wedge U_\epsilon})=0$, which implies that
$X_{t\wedge U_\epsilon}=0$ a.s. and then $Y_{t\wedge U_\epsilon}=y_{0}$ a.s.. In particular, on the trajectories where  $U_{\epsilon}<\infty$, there is a small time $s_0>0$ such that for all $0\le s\le s_0$,
$X_{s+U_\epsilon}<\varepsilon$ and $Y_{s+U_\epsilon}<y_0+\epsilon$
(by right continuity), which gives a contradiction. Therefore, the only possible conclusion is that  $U_{\epsilon}=\infty$ and  we conclude that
$X_t=0$ and  $Y_t=y_0$, for all t.
\end{proof}

\subsection{Uniqueness}
In this section we shall prove pathwise uniqueness for the system \eqref{eq:e1}. 
We need the ellipticity assumption for the coefficients $\ell_i, i=1,2$, given in Assumption {\bf (B4)},
\begin{enumerate}[{\bf (F5)}]
\item For $i=1,2$ and for every $0<\delta\le n<\infty$, there exists $\zeta=\zeta(\delta,n)>0$ such that
$$
\zeta\le \inf\{\ell_i(x,y): \, (x,y)\in [\delta,n]^2\}.
$$
\end{enumerate}
We also need to have a control on the small jumps and this is done through the following hypothesis
on $\lambda$, which is the analog of \eqref{slight} in Proposition \ref{the:main2}.

\medskip
\begin{enumerate}[{\bf (F6)}]
\item There exists $\varepsilon_0>0$ such that 
\begin{equation}
\label{hyp:H}
\liminf\limits_{a\downarrow 0} \left[e^{\,\varepsilon_0\int_{a}^1 z\,  \lambda(dz)}\int_{0}^a z^2 \lambda(dz)\right]
=0.
\end{equation}
\end{enumerate}

\me We notice that if $\int_0^{\infty} (z\wedge 1)\, \lambda(dz)<+\infty$ then $\lambda$ satisfies hypotheses {\bf (F0)} and {\bf (F6)}. Also
it is quite direct to show that if $\mu\le \lambda$ and $\lambda$ satisfies  {\bf (F0)} and {\bf (F6)}, then
$\mu$ fulfils  {\bf (F0)} and {\bf (F6)}.

\me For a solution $(X,Y)$ of system \eqref{eq:e1}, we denote
by $\T_{\mathbf{e}}$ the explosion time of $(X,Y)$, which is given by 
$$
\T_{\mathbf{e}}=\lim\limits_{n\to\infty} S_n^X\wedge S_n^Y.
$$ 
Now, we are ready to state a uniqueness result. 

\begin{thm} 
\label{the:e1} Assume that $(x_{0}, y_{0}) \in \mathbb{R}_{+}^2$.
Assume that the coefficients of the system \eqref{eq:e1} satisfy {\bf (F1)--(F5)}, 
and $\lambda=\lambda_1+\lambda_2$ satisfies {\bf (F0)} and {\bf (F6)}. Then, pathwise uniqueness 
holds for this system, that is, if $(X,Y)$ and $(\wX,\wY)$ are two solutions
up to their respective explosion times $\T_{\mathbf{e}}$ and $\widetilde{\T}_{\mathbf{e}}$, 
then $\T_{\mathbf{e}}=\widetilde{\T}_{\mathbf{e}}$ a.s. and 
for all $t< \T_{\mathbf{e}}$ we have $(X_t,Y_t)=(\wX_t,\wY_t)$ a.s..

\me Under the extra hypothesis $\int_0^{\infty} (z^2 \wedge z)\, \lambda(dz)<\infty$, we have $\T_{\mathbf{e}}=\infty$ a.s.
\end{thm}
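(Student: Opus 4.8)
The plan is to prove pathwise uniqueness by a Yamada--Watanabe argument adapted to the coupled, singular-jump setting, in the spirit of Fu--Li and Li--Pu but carried out on the two-dimensional distance process. First I would reduce to two solutions $(X,Y)$ and $(\wX,\wY)$ carried by the same Brownian motions $B^1,B^2$ and the same Poisson measures $N^1,N^2$, with common initial condition $(x_0,y_0)$. The degenerate cases $x_0=0$ or $y_0=0$ are settled by Proposition \ref{prop:1}, which determines both solutions explicitly, so I may assume $x_0,y_0>0$. I would then localize: for $0<\delta<\min(x_0,y_0)\le n$ set $\tau=\tau_{\delta,n}=\inf\{t:\ (X_t,Y_t)\notin[\delta,n]^2 \text{ or } (\wX_t,\wY_t)\notin[\delta,n]^2\}$, so that on $[0,\tau)$ every coefficient is Lipschitz with a constant $L_{\delta,n}$, bounded, and by {\bf (F2)}, {\bf (F5)} the maps $\ell_i,p_i$ are bounded below by positive constants. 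The goal is $(X,Y)=(\wX,\wY)$ on $[0,\tau_{\delta,n})$; letting $\delta\downarrow0$ and $n\uparrow\infty$, and using Proposition \ref{prop:1} once more to handle absorption on the axes, then yields agreement up to $\T_{\mathbf{e}}$ and forces $\T_{\mathbf{e}}=\widetilde{\T}_{\mathbf{e}}$.

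The core is an estimate on the combined distance $R_t=\sqrt{D_t^2+E_t^2}$ with $D=X-\wX$, $E=Y-\wY$. I would introduce the usual Yamada--Watanabe functions $\phi_m\uparrow|\cdot|$ with $0\le\phi_m'\le1$ and $\phi_m''(r)\le \tfrac{2}{mr}\ind_{[a_m,a_{m-1}]}(r)$ for a suitable $a_m\downarrow0$, and apply It\^o's formula to $\phi_m(R_{t\wedge\tau})$. The drift is handled by $|b_i(X,Y)-b_i(\wX,\wY)|\le L_{\delta,n}(|D|+|E|)\le \sqrt2\,L_{\delta,n}R$, producing a Gronwall term. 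For the Brownian part the decisive point---and the reason I work with $R$ rather than with $\phi_m(D)+\phi_m(E)$ separately---is that the elementary bound $(\sqrt{\ell_i(X,Y)}-\sqrt{\ell_i(\wX,\wY)})^2\le|\ell_i(X,Y)-\ell_i(\wX,\wY)|\le L_{\delta,n}(|D|+|E|)$ recombines across the two coordinates into $d\langle R^c\rangle_s\le C\,R_s\,ds$, whence the second-order term $\tfrac12\phi_m''(R_s)\,d\langle R^c\rangle_s\le \tfrac{C}{m}\,ds\to0$. Treating $D$ and $E$ in isolation would instead leave the uncontrollable cross terms $|E|/|D|$ and $|D|/|E|$, so the combined distance is exactly what dissolves the diffusive coupling.

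The jump contribution is the main obstacle, since this is where the coupling enters through the state-dependent rates. At an atom $(s,\theta,z)$ of $N^1$ the two copies of the first equation jump together when $\theta\le\min(\kappa_1(X_{s-},Y_{s-}),\kappa_1(\wX_{s-},\wY_{s-}))$, and this regime behaves like the diffusion: the jump of $D$ is $(p_1(X_{s-},Y_{s-})-p_1(\wX_{s-},\wY_{s-}))z$, and a second-order expansion of $\phi_m$ gives a Gronwall term plus a piece bounded by $\tfrac{C}{m}\int_0^{a_{m-1}}z^2\,\lambda_1(dz)\to0$. The genuinely singular regime is $\min<\theta\le\max$, where exactly one copy jumps by up to $z$: here the $\theta$-window has width $|\kappa_1(X_{s-},Y_{s-})-\kappa_1(\wX_{s-},\wY_{s-})|\le L_{\delta,n}(|D_{s-}|+|E_{s-}|)\le CR_{s-}$, so that --- crucially --- a discrepancy $E_{s-}\ne0$ in the second coordinate can create a jump in $D$ even when $D_{s-}=0$. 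Splitting $z$ at a cutoff $a$, the small jumps $z\le a$ give, after compensation and using the window width $\le CR_{s-}$, a contribution of order $\int_0^a z^2\,\lambda(dz)$; the intermediate jumps $a<z\le1$ produce a Gronwall rate of order $1+\int_a^1 z\,\lambda(dz)$; the finitely many large jumps $z\ge1$ are absorbed through the localization together with {\bf (F2)}, {\bf (F3)}. This is the step where the ellipticity {\bf (F5)} is needed, the non-degenerate diffusion being used to dominate the separation produced by the singular jumps. Letting $m\to\infty$ then assembles, on a time interval short enough that $Ct\le\varepsilon_0$, an inequality of the form
\begin{equation*}
\EE(R_{t\wedge\tau})\le C\int_0^a z^2\,\lambda(dz)+C\Big(1+\int_a^1 z\,\lambda(dz)\Big)\int_0^t \EE(R_{s\wedge\tau})\,ds .
\end{equation*}
Gronwall's lemma gives $\EE(R_{t\wedge\tau})\le C\int_0^a z^2\,\lambda(dz)\,e^{\,C(1+\int_a^1 z\,\lambda(dz))t}$, and taking $\liminf_{a\downarrow0}$ invokes precisely hypothesis {\bf (F6)} to force $\EE(R_{t\wedge\tau})=0$; iterating over successive short intervals then yields $(X,Y)=(\wX,\wY)$ on all of $[0,\tau_{\delta,n})$.

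Finally I would remove the localization as above to obtain pathwise uniqueness up to $\T_{\mathbf{e}}=\widetilde{\T}_{\mathbf{e}}$ under {\bf (F0)} and {\bf (F6)} alone. For the last assertion, under the stronger moment condition $\int_0^\infty(z^2\wedge z)\,\lambda(dz)<\infty$ the a priori bounds of Proposition \ref{pro:2} give $\EE(X_t+Y_t)<\infty$ together with $n\,\PP(S_n<t)\le \EE(X_{t\wedge S_n}+Y_{t\wedge S_n})$ bounded, so that $S_n\to\infty$ and hence $\T_{\mathbf{e}}=\infty$ almost surely. I expect the balancing of the singular small-jump estimate against the exponential Gronwall factor, encoded in {\bf (F6)}, to be the crux of the whole argument.
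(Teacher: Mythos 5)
Your overall skeleton (localization to $[\delta,n]^2$, splitting jumps at a cutoff $a$, a short-time Gronwall estimate with coefficient of order $\int_a^1 z\,\lambda(dz)$, and invoking \textbf{(F6)} to balance an additive small-jump error against the exponential factor) matches the paper, but the core device you rely on --- Yamada--Watanabe functions $\phi_m$ applied to $R=\sqrt{(X-\wX)^2+(Y-\wY)^2}$ --- fails exactly in the regime you yourself single out as genuinely singular, and this is a real gap. In that regime ($\min<\theta\le\max$) one copy jumps by $\approx p_1h(z)$, a quantity of order $z$ that is \emph{not} proportional to $R_{s-}$, while the $\theta$-window has width $\le CR_{s-}$. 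Your claimed contribution ``of order $\int_0^a z^2\lambda(dz)$'' needs the second-order Taylor remainder of $\phi_m$ to be $\lesssim z^2/(mR_{s-})$, so that the window width cancels the denominator; this uses $\phi_m''(\xi)\le 2/(m\xi)$ with $\xi\gtrsim R_{s-}$ along the whole jump segment, which holds for jumps that increase $R$ but fails for jumps that decrease it. For a downward jump the segment $[R_{s-}-|\eta|,R_{s-}]$ can descend into the zone where only the uniform bound $\phi_m''\le 2/(ma_m)$ is available, and (window width)$\times$(remainder) is then of order $R_{s-}z^2/(ma_m)$, which diverges as $m\to\infty$ since $a_{m-1}/a_m\ge e^{m/2}$ is forced by the construction of $\phi_m$; falling back on the first-order bound $2h(z)$ instead produces a Gronwall coefficient $\int_0^a h(z)\lambda(dz)$, which is infinite in precisely the cases \textbf{(F6)} is designed for (e.g.\ $\lambda(dz)=z^{-2}dz$). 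This is where the sign-coherence that rescues the one-dimensional Fu--Li/Dawson--Li argument is destroyed: since $\kappa_1$ depends on both coordinates, the windows of the two solutions are not ordered by the sign of $X-\wX$, so singular jumps can push $R$ downward by amounts unrelated to $R_{s-}$, and no interpolation between your first- and second-order bounds closes the estimate. Your appeal to ellipticity \textbf{(F5)} (``the non-degenerate diffusion dominates the separation produced by the singular jumps'') names no mechanism; in the paper \textbf{(F5)} serves only to make $\sqrt{\ell_i}$ Lipschitz on $[\delta,n]^2$, never to offset jumps.

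The paper circumvents the obstruction by never composing with a nonlinear function: it estimates $R_t=\EE\big((X-\wX)^*_{t\wedge T_{n,\delta}}+(Y-\wY)^*_{t\wedge T_{n,\delta}}\big)$ linearly, controlling the compensated small-jump martingale $\Gamma^a$ via the $L^1$ Burkholder--Davis--Gundy and Lenglart--L\'epingle--Pratelli inequalities together with the exact proportionality $\langle\Gamma^a,\Gamma^a\rangle=\frac{W_1(a)}{W_1}\langle\Gamma,\Gamma\rangle$, where $W_1(a)=\int_0^a h^2\,d\lambda_1$. This turns the small-jump term into $\beta(a)$ times an a priori finite quantity ($\EE\sup|\Gamma|+\NvDelta$), i.e.\ a self-bounding term $\beta(a)R_t$ (absorbed for small $a$) plus the additive error $\beta(a)\NvDelta$ with $\beta(a)\to0$; Gronwall with coefficient $\gamma(a)\approx \K(n)\int_a^\infty h\,d\lambda$ and the choice $8\K(n)t_0\le\varepsilon_0$ then hand the balance to \textbf{(F6)}. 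Note also that for the statement without $\int_0^\infty(z^2\wedge z)\,\lambda(dz)<\infty$ you need the paper's second step --- truncating the Poisson measures at a level $D$ and stopping at the first jump exceeding $D$ --- since spatial localization alone does not control the quantities $\int_1^\infty(z-h(z))\,\lambda_i(dz)$ entering your large-jump estimates.
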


\begin{proof}  (i) In the first part of the proof, we assume the extra condition  
\begin{equation}
\label{eq:extra}
\int_0^{\infty} (z^2 \wedge z)\, \lambda(dz)<\infty
\end{equation}
and Proposition \ref{pro:2} guarantees non-explosion of solutions. Since $\lambda$ satisfies {\bf (F6)},
there exists $\varepsilon_0>0$ such that

$$
\liminf\limits_{a\downarrow 0}e^{\,\varepsilon_0 \int_a^\infty h(z)\,  \lambda(dz)}
\int_0^a h(z)^2 \lambda(dz)=0,
$$
because $h$ agrees with the identity on a neighborhood of $0$ and $h$ is bounded then $\int_1^\infty h(z) \lambda(dz)<\infty$. 
In what follows, we denote by $\NvDelta$ a bound for $h$. 

\medskip

We consider $(X,Y)$ and $(\wX,\wY)$ two strong solutions of the system
\eqref{eq:e1}.
Let us fix $0<\delta< x_0\wedge y_0$  and $n\in \mathbb{N}^*$ and let us take 
$T_\delta=\inf\{t>0: X_t\wedge Y_t \wedge \wX_t \wedge \wY_t < \delta\}$,
$S_n=S_n^X\wedge S_n^Y\wedge S_n^{\wX}\wedge S_n^{\wY}$, where we recall notation $S_n^Z=\inf\{t \geq 0 : Z_t\geq n\}$, and
\begin{equation}
\label{eq:Tnd}
T_{{n,\delta}}=T_\delta\wedge S_n.
\end{equation}

\medskip
We will prove that there exists  $t_{0}>0$ and a constant $A>0$ such that for all $t\le t_0$ 
\begin{equation}
\label{uni}
\EE((X-\wX)^*_\Td+(Y-\wY)^*_\Td)\le A \liminf_{a\to 0} \left(e^{\epsilon_0 \int_a^\infty h(z)\, \lambda(dz)}\int_0^a h(z)^2\, \lambda(dz)\,  
\right)^\frac12=0,
\end{equation}
where we recall that we write $Z^*_t=\sup\{ Z_s : s\leq t\}$.
Uniqueness will be shown on the interval $[0, t_0\wedge T_{{n,\delta}}]$. Similarly, it will extend to the interval $[t_0\wedge T_{{n,\delta}},2t_0\wedge T_{{n,\delta}}]$ and by iterating this argument, uniqueness
will be shown in $[0, T_{{n,\delta}}]$ (when $T_{{n,\delta}}=\infty$ we take this interval to be $[0,\infty)$).

 Then, since the processes do not explode, we can take the limit as $n\to \infty$,
to conclude uniqueness on $[0,T_\delta]$. Finally, we deduce that $X=\wX, Y=\wY$ on the interval 
$[0,T_0)$, where $T_0=\lim\uparrow  T_\delta$. 
Notice that one of the coordinates
has to be $0$ on the left of $T_0$, when $T_0$ is finite. 
Say that $X_{T_0-}=\wX_{T_0-}=0$. Since  $T_0$ is a predictable stopping time
the Poisson processes cannot jump at this time, which implies that $X_{T_0}=\wX_{T_0}=0$ and therefore
from the uniqueness starting from $0$ we conclude $X_{t}=\wX_{t}=0$ for all $t\ge T_0$, which also
implies that $Y_{t}=\wY_{t}=Y_{T_0-}$ for all $t\ge T_0$, showing the desired uniqueness.

\medskip \noindent Let us now prove \eqref{uni}. 

\medskip \noindent  In what follows we denote by
$$
\begin{array}{l}
\Delta X_s=X_{s}-\wX_{s}\ ;\  \Delta Y_s=Y_{s}-\wY_{s};\\
\Delta b_s=b_1(X_{s},Y_{s})-b_1(\wX_{s},\wY_{s})\ ;\, 
\Delta \ell_s^{1/2}=(\ell_1(X_{s},Y_{s}))^{1/2}-(\ell_1(\wX_{s},\wY_{s}))^{1/2};\\
\Delta \kappa_s=\kappa_1(X_{s-},Y_{s-})-\kappa_1(\wX_{s-},\wY_{s-})\ ; \ 
\Delta p_s=p_1(X_{s-},Y_{s-})-p_1(\wX_{s-},\wY_{s-})\ ; \\
\Delta u_s(\theta)=\ind_{\theta\le \kappa_1(X_{s-},Y_{s-})} p_1(X_{s-},Y_{s-})-
\ind_{\theta\le \kappa_1(\wX_{s-},\wY_{s-})} p_1(\wX_{s-},\wY_{s-}),
\end{array}
$$
and %for any $0\le d$,
$$\Gamma_t=\int_0^{t} \int_{[0,\infty)^2} \Delta u_s(\theta)\, h(z) \, \nt^1(ds,d\theta, dz).$$
 We observe that
 \begin{equation}
 \label{decc}
 \Delta X_t=\int_0^t \Delta b_sds+ \int_0^{t} \Delta \ell_{s}^{1/2} dB^1_s+\Gamma_t+\int_0^\Td\int_{[0,\infty)^2}  \Delta u_s(\theta) (z -h(z)) N^1(ds,d\theta, dz)
 \end{equation}
\medskip \noindent %Let us bound  $\EE((\Delta X)^*_\Td)$ as
and get
\begin{align}
\label{eq:e4}
\EE((\Delta X)^*_\Td)&\le \EE\left(\int_0^\Td |\Delta b_s|\, ds\right)+
\EE\left(\sup\limits_{0\le r\le \Td} \left| \int_0^{r} \Delta \ell_{s}^{1/2} dB^1_s\right|\right)\\
&\quad +\EE\left(\sup\limits_{0\le r\le \Td}|\Gamma_r|\right)
+\EE\left(\int_0^\Td\int_{[0,\infty)^2}  |\Delta u_s(\theta)| |z -h(z)| N^1(ds,d\theta, dz)\right).\nonumber
\end{align}

Consider $r(n)$ a common Lipschitz constant for all the functions
$b,\ell,\kappa,p,q$ on the interval $[0,n]$ and denote by 
$\K(n)=(2\mathbf{L}\, n+ \mathbf{A}+1) (r(n)+1)$,
where $\mathbf{L}, \mathbf{A}$ are given by the linear growth condition on {\bf (F3)}. In particular, $\K(n)$ serves
as a Lipschitz constant for all the coefficients of the system in the interval $[0,n]$, as well as a bound for these
functions on $[0,n]^2$.

\medskip
We introduce 
%$\Gamma_r=\Gamma_r^{0\to}$ and denote
$$R_{t}= \EE((X-\wX)^*_\Td+(Y-\wY)^*_\Td) = \EE((\Delta X)^*_\Td+(\Delta Y)^*_\Td),$$
and
the first term in the RHS of \eqref{eq:e4} is clearly bounded by
\begin{equation}
\label{eq:first}
\EE\left(\int_0^\Td |\Delta b_s|\, ds\right)\le \K(n) \, t\, R_t.
% \EE\left((\Delta X)^*_\Td+(\Delta Y)^*_\Td\right).
\end{equation}

Let us bound the second term (Brownian term) in \eqref{eq:e4}. By definition for $s< \Td$, 
we have $X_s, Y_s, \wX_s, \wY_s \in [\delta, n]$ and therefore 
$a=\ell_1(X_s,Y_s)\ge \zeta, b=\ell_1(\wX_s,\wY_s)\ge \zeta$, where $\zeta=\zeta(\delta,n)>0$ is given by 
the ellipticity assumption {\bf (F5)}.
Now, for $a,b\ge \zeta$ we have $|\sqrt{a}-\sqrt{b}|\le \frac1{2\sqrt{\zeta}}|a-b|$ and we get from
\eqref{BDG} that
\begin{eqnarray}
\label{eq:second}
\EE\left(\sup\limits_{0\le r\le \Td} \left| \int_0^{r} \Delta \ell_{s}^{{}^{1/2}} dB^1_s\right|\right)
&\le &\CU \EE\left(\left(\int_0^{\Td}\left |\Big(\ell_1(X_s,Y_s)\Big)^{1/2}-
\Big(\ell_1(\wX_s,\wY_s)\Big)^{1/2}\,\right|^2\, ds\right)^{1/2}\right)
\nonumber\\
&\le & \frac{\CU}{2\sqrt{\zeta}} \EE\left(\left(\int_0^{\Td}\left |\ell_1(X_s,Y_s)-\ell_1(\wX_s,\wY_s)\right|^2\, ds\right)^{1/2}\right)\nonumber\\
&\le & \frac{\K(n) \CU}{2\sqrt{\zeta}}\, \sqrt{t}\, R_t
%\, \EE\left((\Delta X)^*_\Td+(\Delta Y)^*_\Td\right).
\end{eqnarray}
For the last term in \eqref{eq:e4}, we use that $0\le p\le 1$ and the triangular inequality 
$$
\begin{array}{l}
|\Delta u_s(\theta)|%=|\ind_{\theta\le \kappa_1(X_{s-},Y_{s-})} p(X_{s-},Y_{s-})-
%\ind_{\theta\le \kappa_1(\wX_{s-},\wY_{s-})} p(\wX_{s-},\wY_{s-})|\\
%\\
\le
|\ind_{\theta\le \kappa_1(X_{s-},Y_{s-})} -
\ind_{\theta\le \kappa_1(\wX_{s-},\wY_{s-})}|+
\ind_{\theta\le \kappa_1(\wX_{s-},\wY_{s-})} |p_1(X_{s-},Y_{s-})- p_1(\wX_{s-},\wY_{s-})|.
\end{array}
$$
This implies that

\ben
\int_0^{\infty} |\Delta u_s(\theta)| \,d\theta&\le& |\kappa_1(X_{s-},Y_{s-})-\kappa_1(\wX_{s-},\wY_{s-})|+
\kappa_1(\wX_{s-},\wY_{s-}) |p(X_{s-},Y_{s-})- p(\wX_{s-},\wY_{s-})|\\
%\le (r(n)+(2\mathbf{L}\, n+ \mathbf{A})r(n))\left(|X_{s-}-\wX_{s-}|+|Y_{s-}-\wY_{s-}|\right)
&\le &\K(n) \left(|X_{s-}-\wX_{s-}|+|Y_{s-}-\wY_{s-}|\right)
\een
and therefore
\begin{eqnarray}
\label{eq:elast}
\EE\left(\int_0^\Td |\Delta u_s(\theta)||z-h(z)|N^1(ds,d\theta, dz)\right)
&=&
\int_0^\infty |z-h(z)|\, \lambda_1(dz) \, \EE\left(\int_0^\Td\int_0^\infty |\Delta u_s(\theta)| \,ds d\theta \right)\nonumber\\
& \le & \K(n) \, t \int_0^\infty \!\!|z-h(z)|\, \lambda_1(dz) 
\; R_t. \nonumber
\end{eqnarray}

Let us now concentrate on the third term in \eqref{eq:e4}. We write
$$\Gamma^a_r=
\int_0^{r} \int_{[0,\infty)^2} \Delta u_s(\theta)\, h(z) \ind_{0\le z\le a} \, \nt^1(ds,d\theta, dz), \quad
\Gamma^{a\to}_r=\int_0^{r} \int_{[0,\infty)^2} \Delta u_s(\theta)\, h(z) \ind_{a< z} \, \nt^1(ds,d\theta, dz).$$
for $a\geq 0$ and  
$\sup\limits_{0\le s\le \Td} | \Gamma_s|  \le \sup\limits_{0\le s\le \Td} | \Gamma^a_s|+
\sup\limits_{0\le s\le \Td} | \Gamma^{a\to}_s|$.
Using now \eqref{BDG} and \eqref{ineq:BDG-Lenglart} and $\ind_{a<z} |d\nt^1| \leq \ind_{a<z}d(N^1+\nu^1)$, we get 
\begin{align} 
\label{eq:ethird}
&\EE\left(\sup\limits_{0\le s\le \Td} | \Gamma_s|\right) \nonumber \\ &\qquad \le  \EE\left(\sup\limits_{0\le s\le \Td} | \Gamma^a_s|\right)+
\EE\left(\int_0^\Td\int_0^{\infty} |\Delta u_s(\theta)| \,h(z) \ind_{a<z} \, d(N^1+\nu^1) \right)\nonumber \\
&\qquad \le \CU\EE\left([\Gamma^a,\Gamma^a]^{1/2}_\Td\right)+
2 \int_a^\infty\!\!\!\! h(z)\, \lambda_1(dz)\; \EE\left(\int_0^\Td \int_0^\infty |\Delta u_s(\theta)| \, ds d\theta  \right)\\
&\qquad \le 3\CU \EE\left(\langle \Gamma^a, \Gamma^a \rangle^{1/2}_\Td\right)+
2\K(n) \int_a^\infty\!\!\!\! h(z)\, \lambda_1(dz)\; R_t.
\end{align}
%Here, we have used the fact that $(\Gamma^{a\to}_s)_s$ is a finite variation martingale, because it has a finite
%number of jumps in any bounded interval of time. Also we have bounded 

It remains to estimate $\EE\left(\langle \Gamma^a, \Gamma^a \rangle^{1/2}_\Td\right)$.
If we denote by $W_1(a)=
\int_{[0,a]} h^2(z) \lambda_1(dz)$ and $W_1=W_1(\infty)$, then for $0<a\le 1$, 
to be fixed later on, we get
\begin{align*}
\EE\left(\langle \Gamma^a, \Gamma^a\rangle^{1/2}_\Td\right)&=\sqrt{W_1(a)}\;
\EE\left(\left(\int_0^\Td \int_0^\infty |\Delta u_s| \,d\theta ds\right)^{1/2}\right)\\
&=\frac{\sqrt{W_1(a)}}{\sqrt{W_1}}\;\sqrt{W_1}\;
\EE\left(\left(\int_0^\td  \int_0^\infty |\Delta u_s| \,d\theta ds\right)^{1/2}\right)\\
&=\frac{\sqrt{W_1(a)}}{\sqrt{W_1}}\;
\EE\left(\langle \Gamma, \Gamma\rangle^{1/2}_\Td\right)\\
&\le \frac{\sqrt{W_1(a)}}{\sqrt{W_1}} \,\cb^{\hspace{-0.1cm}-1}
\left(\EE\left(\sup\limits_{0\le s\le \Td}\left|\Gamma_s \right|\right)+\NvDelta\right),
\end{align*}
where we applied  \eqref{ineq:BDG-Lenglart} to $(\Gamma_s)_s$.
Here  $\cb^{\hspace{-0.1cm}-1}$ is a finite constant, and obviously 
since $\NvDelta$ is a bound for $h$, then $\NvDelta$ is a bound for the jumps of $\Gamma$.

\noindent It remains to remark from \eqref{decc} that 
\begin{align}
\label{eq:e3}
\EE\left(\sup\limits_{0\le r\le \Td}\left| \Gamma_r\right|\right)
%&=\EE\left(\sup\limits_{0\le \Td}\left| \int_0^{r} \int_{[0,\infty)^2}  \Delta u_s(\theta)\, h(z) \, \nt^1(ds,d\theta, dz)\right|\right)\nonumber\\
&\le \EE\left((\Delta X)^*_{T_{n,\delta}}\right)+\EE\left(\int_0^{T_{n,\delta}} |\Delta b_s|\, ds\right)+
\EE\left(\sup\limits_{0\le r\le T_{n,\delta}} \Big| \int_0^{r} \Delta \ell^{{}^{1/2}}_s dB^1_s\Big|\right)\nonumber\\
&\hspace{0.3cm}+\EE\left(\int_0^{T_{n,\delta}}\int_{[0,\infty)^2} |\Delta u_s(\theta)| |z-h(z)| N^1(ds,d\theta, dz)\right)=\mathscr{K}<+\infty.
\end{align}

\noindent   Coming back to \eqref{eq:ethird} we get 
$$\EE\left(\sup\limits_{0\le s\le \Td} | \Gamma_s|\right)\le
 \frac{\sqrt{W_1(a)}}{\sqrt{W_1}} \,3\CU\cb^{\hspace{-0.1cm}-1}
(\mathscr{K}+\NvDelta)+ 2\K(n) \int_a^\infty\!\!\!\!h(z)\, \lambda_1(dz)\; 
\int_0^t R_s ds.$$
\medskip \noindent
Finally, adding all the estimates in  \eqref{eq:e4}, we  obtain the following inequality
$$
\begin{array}{ll}
\EE((\Delta X)^*_\Td)\hspace{-0.2cm}&\le \beta_1(a) \EE\left((\Delta X)^*_\Td\right) 
+\beta_1(a)\NvDelta+\gamma_1(a) \int_0^t R_s\, ds+\rho_1(a,t)R_t,
\end{array}
$$
with
\begin{align*}
&\beta_1(a)=\frac{\sqrt{W_1(a)}}{\sqrt{W_1}} \,3\CU\cb^{\hspace{-0.1cm}-1}; \, 
\gamma_1(a)=2\K(n) \int_a^\infty h(z)\, \lambda_1(dz),\\
&\rho_1(a,t)=\K(n) \left(t+t\int_0^\infty |z-h(z)|\, \lambda_1(dz)+
\frac{\CU}{2\sqrt{\zeta}}\, \sqrt{t}\right)(1+\beta_1(a)).
\end{align*}

In a similar way, we get the upper bound for $\EE((\Delta Y)^*_\Td)$. We call 
$\beta_2, \gamma_2, \rho_2$ the corresponding quantities.
Then, summing up these upper bounds gives the following upper
bound for $R_t$.
$$
R_t\le (\beta_1(a)\vee \beta_2(a))\,  R_t+
\NvDelta\beta(a)+\gamma(a) \int_0^t R_s \, ds+\rho(a,t)R_t,
$$
with $\beta=\beta_1+\beta_2, \gamma=\gamma_1+\gamma_2, \rho=\rho_1+\rho_2$.

We first choose $0<a_0<1$ such that for all $a\le a_0$, we have 
$\beta(a)\le 1/4$, and we choose $0<t_0=t_0(n)$ such that 
$$
\begin{array}{ll}
\rho(1,t_0)\, &=\, \K(n)\left(t_0+2 t_0\int_0^\infty\! |z-h(z)|\, \lambda_1(dz)+
\frac{\CU}{2\sqrt{\zeta}}\, \sqrt{t_0}\right)(1+\beta_1(1))\\
&\quad+\K(n) \left(t_0+2 t_0\int_0^\infty \! |z-h(z)|\, \lambda_2(dz)+
\frac{\CU}{2\sqrt{\zeta}}\, \sqrt{t_0}\right)(1+\beta_2(1)) \,  \le   \, 1/4.
\end{array}
$$
Hence, for all $a\le a_0, t\le t_0$, we get
$R_t\le \frac12  R_t+\NvDelta\beta(a)+\gamma(a)\, \int_0^t R_s \, ds$, and a fortiori it holds
\begin{equation}
\label{eq:4} 
R_t\le 2\NvDelta\beta(a)+2\gamma(a)\, \int_0^t R_s \, ds.
\end{equation}
Gronwall's inequality shows that, for
all $0<a\le a_0, t\le t_0$
\ben
R_t
&=&6\, \NvDelta \, \CU\cb^{\hspace{-0.1cm}-1}\left(\sqrt{\frac{W_1(a)}{W_1}}+
\sqrt{\frac{W_2(a)}{W_2}}\right)e^{2t \gamma(a)} \\
&\le &  12\, \NvDelta\frac{\CU\cb^{\hspace{-0.1cm}-1}}{\sqrt{W_1}\,\wedge \,\sqrt{W_2}}
\left(\int_0^a h(z)^2\, \lambda(dz)\,  e^{8 \K(n)t\, \int	\limits_a^\infty h(z)\, \lambda(dz)}\right)^\frac12\\
&\le & A\left(\int_0^a h^2(z)\, \lambda(dz)\,  e^{8\K(n)t_0 \, \int_a^\infty z\, \lambda(dz)}\right)^\frac12,
\een
where  $A=12 \, \NvDelta \, \frac{\CU\cb^{\hspace{-0.1cm}-1}}{\sqrt{W_1\,\wedge \,W_2}}$.
Hence, if we also assume that $8\K(n)t_0 \le \epsilon_0$, we have for all $t\le t_0$ 
$$
\EE((X-\wX)^*_\Td+(Y-\wY)^*_\Td)\le A \liminf_{a\to 0} \left(\int_0^a h(z)^2\, \lambda(dz)\,  
e^{\epsilon_0 \int_a^\infty h(z)\, \lambda(dz)}\right)^\frac12=0.
$$
This result was our aim and as previously detailed, uniqueness is then proved under  \eqref{eq:extra}.

\medskip

(ii) Now, we relax the extra integrability condition \eqref{eq:extra}. % to $\int (z^2\wedge 1) \, \lambda(dz)<\infty$. For every $n>x_0\vee y_0$, we consider $G(n)=2\mathbf{L} \,n+\mathbf{A}$, which is a bound for $\kappa_i(x,y)$, when $0\le x,y \le n$. We consider
%$$ 0<\bar a=\bar a(n)=\inf\{p(x,y)\wedge q(x,y):\,  x,y \in [0,n]^2\}\le 1,$$ 
%and $D\ge 1 \vee n/\bar a$. 
We truncate the Poisson processes as follows
$$
N^{i,D}(dt,d\theta,dz)=\ind_{z\le D}\, N^i(dt,d\theta,dz), \; i=1,2,
$$
where now the intensities are 
$d\widehat \nu_i^{D}(dt,d\theta,dz)=\ind_{z\le D} \, dt\, d\theta\,\lambda_i(dz)$.
In particular, we have 
$\widehat\lambda_i^{D}(dz)=\ind_{z\le D} \, \lambda_i(dz)$, which satisfy {\bf (F6)} and the
extra condition  of part $(i)$ is satisfied:
$$
\int_0^{\infty} (z^2\wedge z) \widehat\lambda_i^D(dz)=
\int_{0}^{D} (z^2\wedge z)\, \lambda(dz)<\infty.
$$
We consider the associated drift term, where compensation has been truncated :
$$b_i^D(x,y)=b_i(x,y)-  \kappa_i(x,y)p_i(x,y)\, 
 \int_{(D,\infty)} h(z)\lambda(dz)$$
With these truncated Poisson processes, consider the analogue of \eqref{eq:e1}
\begin{eqnarray}
\label{eq:e1'}
&\hX_t&\hspace{-0.3cm}=x_0+\int_0^t b_1^D(\hX_s,\hY_s) ds + 
\int_0^t \sqrt{\ell_1 (\hX_s,\hY_s)} \, dB^1_s\nonumber\\
&&\hspace{0.5cm}+\int_0^t \int_{[0,\infty)^2} \ind_{\theta\le \kappa_1(\hX_{s-},\hY_{s-})}
\,  p_1(\hX_{s-},\hY_{s-})\, h(z)\,  \nt^{1,D}(ds, d\theta,dz)\nonumber\\
&&\hspace{0.5cm}+\int_0^t \int_{[0,\infty)^2} \ind_{\theta\le \kappa_1(\hX_{s-},\hY_{s-}) }\, 
p_1(\hX_{s-},\hY_{s-}) \,(z-h(z)) \, N^{1,D}(ds, d\theta,dz);\nonumber\\
&\hY_t&\hspace{-0.3cm}=y_0+\int_0^t b_2^D(\hX_s,\hY_s) ds 
+\int_0^t \sqrt{\ell_1 (\hX_s,\hY_s)} \, dB^2_s\nonumber\\
&&\hspace{0.5cm}+ \int_0^t \int_{[0,\infty)^2} \ind_{\theta\le \kappa_2(\hX_{s-},\hY_{s-})} 
 \, p_2(\hX_{s-},\hY_{s-})\, h(z)\,  \nt^{2,D}(ds, d\theta,dz)\nonumber\\
&&\hspace{0.5cm}+\int_0^t \int_{[0,\infty)^2} \ind_{\theta\le \kappa_2(\hX_{s-},\hY_{s-})}  \, 
p_2(\hX_{s-},\hY_{s-}) \, (z-h(z)) \, N^{2,D}(ds, d\theta,dz).
\end{eqnarray}
 We claim that if $(X,Y)$, $(\wX,\wY)$ are two solutions of \eqref{eq:e1}, then they are also solutions of 
\eqref{eq:e1'}, on the interval $[0,\tau^D)$, where $\tau^D$ is the first time when
the point measure induces a jump $z$ larger than $D$:
$$\tau^D=\inf\left\{t >0 :\int_0^t \int_{[0,\infty)^2} \ind_{\theta\le \kappa_1(X_{s-},Y_{s-}), \, z> D}  N^{1}(ds, d\theta,dz)+ \ind_{\theta\le \kappa_2(X_{s-},Y_{s-}), \, z>D}  N^{2}(ds, d\theta,dz)\, > \,0\right\}$$
%=\inf h((D,\infty))\wedge \inf Id-h((D,\infty))$. 
Indeed, 
%there exists a family of random variables $(\Theta_s)_{s\geq 0}$ associated with the jump given by the Poisson point measure at time $s$:
%$$\ind_{\theta\le \kappa_1(X_{s-},Y_{s-}) } N^{1}(\{s\}, d\theta,dz)=\ind_{\theta\le \kappa_1(X_{s-},Y_{s-}) } \delta_{(\Theta_s, X_s-X_{s-})}(d\theta, dz),$$
%so $$\{N^{D}(\{s\}\times [0,\kappa_1(X_{s-},Y_{s-})] \times  (D,\infty)) >0\} \subset\{ X_s-X_{s-}>D\}.$$
  $\ind_{s<\tau^D,\,  \theta\le \kappa_i(X_{s-},Y_{s-}) }\ind_{z>D} N^{i}(ds, d\theta, dz) =0$ and 
$$\ind_{s<\tau^D, \, \theta\le \kappa_i(X_{s-},Y_{s-}) } \, N^{i}(ds, d\theta,dz)=
\ind_{s<\tau^D,\, \theta\le \kappa_i(X_{s-},Y_{s-}) } \, N^{i,D}(ds, d\theta,dz),$$
while $b_i^D-b_i$ is the correction of the drift coming from the compensation of  $N^{1,D}-N^1$.\\

The first part $(i)$ then ensures that $(X,Y)$ and $(\wX,\wY)$ coincide up to time $\tau^D$.
Writing $S_n=S_n(X,Y,\wX,\wY)$ the first time when either $X$ or $Y$ or $\wX$ or $\wY$ goes beyond $n$, we observe that
$$\{\tau^D< t\wedge S_n\} \subset \cup_{i\in\{1,2\}} \{ N^i ([0,t]\times [0,\sup \kappa_i([0,n]^2)] \times [D,\infty))>0\}.$$
Besides,   for each $n\in\mathbb N$ and $t>0$ the probability  of the event of the right hand side goes to $0$ as $D\rightarrow \infty$. 
Letting $n$ and then $D$ go to infinity ensures uniqueness up to explosion time $\mathbb T_e$.
 The proof is completed.
\end{proof}

\appendix 
\section{Hypotheses (H1) and (H2)}
In this appendix, we recall the framework introduced in \cite{BCM} Section 2, adapted to our setting. 

\me Let  ${\cal X}$   be   the bounded subset $(0,1]^2$ of $\mathbb{R}^2$ and $\mathcal U=[-1,1]^2$. 

\me 
  For any $N\geq 1$, we consider 
a discrete time ${\cal X}$-valued   Markov chain $(X^N_{k} :  k\in \mathbb{N})$ with increments $X^N_{k+1} - X^N_{k}$ taking values in $\mathcal U$.   Let $(v_{N})_{N}$ be a given sequence of positive real numbers going to infinity when $N$ tends to infinity. For $x\in {\cal X}$, we define\be\label{GNx}
{\cal G}_{x}^N(H) =  v_{N}\, \E\big(H(X^N_{k+1} - X^N_{k})\,| \, X^N_{k}= x\big)=v_N\, \E\big(H(X^N_{1} - X^N_{0})\,| \, X^N_{0}= x\big),\ee
for real valued bounded measurable functions $H$ defined on ${\cal U}$. 

\bi  We first observe that   Hypothesis (H0) in \cite{BCM} is obviously satisfied since the state space is bounded.
%\be
%\label{H0}
%\lim_{b\to \infty} \sup_{x\in {\cal X}, N\in \mathbb{N}^*} {\cal G}_{x}^N\big(\un_{{\cal B}(0,b)^c} \big)= 0, 
%\ee
%where ${\cal B}(0,b)$ denotes a ball centered at $0$ with radius $b$. 

\me 
We   introduce the functional space
$$C_{b,0}^2 = C^2_{b,0}({\cal U}, \mathbb{R})=  \left\{ H\in C_{b}( {\cal U}, \mathbb{R}) \, :  \, H(u) = \sum_{i=1}^2 \alpha_{i}u_{i} + \sum_{i,j=1}^2 \beta_{i,j} u_{i} u_{j}+ o(|u|^2), \, \alpha_{i}, \beta_{i,j} \in \R \right\}$$
and here
  the  {\bf{specific function}}  $h$   is the two dimensional identity function
  \be \label{truncation} h = (h^1,   h^2) \in  (C^2_{b,0})^2\ ;\ h^i(u)=  u_{i} \quad (i=1, 2).\ee

% \newpage
 
 \bi {\bf Hypotheses (H1)}
 There exists a functional space $\mathcal H$ such that 
 \begin{enumerate}
 \item   \emph{${\cal  H}$ is a subset of  $C^2_{b,0}$ and $\, h^i, h^ih^j\, \in Vect( {\cal H})$  for $i,j = 1, \ldots, 2$.}
\item  \emph{For any  $ g\in C_{c}({\cal U}, \mathbb{R})$ with $g(0) = 0$, there exists a sequence $(g_{n})_{n}  \in C^2_{b,0}$ such that $\,\lim_{n \to \infty} \|g-g_{n}\|_{\infty, {\cal U} }= 0$
and $|h|^2\, g_n \in Vect({\cal H})$.} 

 \item    \emph{There exists a family of real numbers $\,({\cal G}_{x}( H );  x\in {\cal X}, H \in {\cal  H})$ such that for any $H \in {\cal  H}$}, 
 \ben
(i) && \qquad \lim_{N\to \infty}\sup_{x\in {\cal X}} \left| {\cal G}_{x}^N(H) - {\cal G}_{x}(H) \right| = 0. \qquad \qquad\\
(ii) &&  \qquad 
\sup_{x\in {\cal X}} \left|  {\cal G}_{x}(H) \right|  < + \infty.
\een
\end{enumerate}

\label{statgen}
% \me {\bf Hypotheses (H'1)} \emph{There exists $\,({\cal G}_{x} H, x\in {\cal X}, H \in {\cal  H})$ such that for any $H \in {\cal  H}$, }
%\ben
%1. &&  \qquad  \lim_{N\to \infty}\sup_{x\in {\cal X}} \left| {\cal G}_{x}^N(H) - {\cal G}_{x}(H) \right| = 0. \qquad \qquad\\
%2. &&  \qquad 
%\sup_{x\in {\cal X}} \left|  {\cal G}_{x}(H) \right|  < + \infty. \een 

%\noindent \emph{Hence  ${\bf (H'1)}$ implies that the map $ x\in  {\cal X} \rightarrow{\cal G}_{x}(H) $ is measurable and bounded for any $\,H\in {\cal H}$.}

 \begin{thm}
\label{thm-tightness} Assume that the sequence $(X_0^N)_N$  is tight in $\overline{{\cal X}}$
and ${(H1)}$  hold. Then the  sequence of processes $\,(X^N_{[v_{N}.]}, N\in \mathbb{N})$ is tight in $\mathbb{D}([0,\infty), \overline{{\cal X}})$.
\end{thm}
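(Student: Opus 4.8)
The plan is to prove tightness through Aldous's criterion, exploiting two structural facts: the closure $\overline{{\cal X}}=[0,1]^2$ of the state space is compact, and by hypothesis every increment $X^N_{k+1}-X^N_k$ takes values in the \emph{bounded} set ${\cal U}=[-1,1]^2$. Compactness of $\overline{{\cal X}}$ makes the compact-containment condition automatic, so the entire task reduces to controlling the time fluctuations of the rescaled process; write $Y^N_t:=X^N_{[v_Nt]}$ and let $Y^{N,i}$ denote its $i$-th coordinate, $i=1,2$. By Aldous's criterion on $\mathbb{D}([0,\infty),\overline{{\cal X}})$ it then suffices to show that, for every $T>0$ and $\eta>0$, $\lim_{\theta\to0}\limsup_N\sup_{\tau,\sigma}\PP(|Y^N_{\tau+\sigma}-Y^N_\tau|>\eta)=0$, the supremum running over stopping times $\tau\le T$ of the time-changed filtration and over $\sigma\le\theta$; since $|Y^N_{\tau+\sigma}-Y^N_\tau|\le\sum_i|Y^{N,i}_{\tau+\sigma}-Y^{N,i}_\tau|$ it is enough to treat each coordinate separately.

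First I would write the Doob decomposition of each coordinate. Because the increment lies in ${\cal U}$, the coordinate map equals the specific function $h^i(u)=u_i$, so the one-step conditional first and second moments are read off \emph{exactly} from the generator: $\E(Y^{N,i}_{(k+1)/v_N}-Y^{N,i}_{k/v_N}\mid{\cal F}_k)=v_N^{-1}{\cal G}^N_{X^N_k}(h^i)$ and $\E\big((Y^{N,i}_{(k+1)/v_N}-Y^{N,i}_{k/v_N})^2\mid{\cal F}_k\big)=v_N^{-1}{\cal G}^N_{X^N_k}(h^ih^i)$. Hence $Y^{N,i}=X^{N,i}_0+A^{N,i}+M^{N,i}$, where $A^{N,i}_t=v_N^{-1}\sum_{k=0}^{[v_Nt]-1}{\cal G}^N_{X^N_k}(h^i)$ is the predictable drift and $M^{N,i}$ is a martingale with predictable bracket $\langle M^{N,i}\rangle_t=\sum_{k=0}^{[v_Nt]-1}\mathrm{Var}(Y^{N,i}_{(k+1)/v_N}-Y^{N,i}_{k/v_N}\mid{\cal F}_k)$.

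The crucial point is the uniform-in-$(N,x)$ control of these characteristics. Since $h^i,h^ih^i\in\mathrm{Vect}({\cal H})$ by (H1.1), linearity of ${\cal G}^N_x$ together with the uniform convergence (H1.3)(i) and the finiteness (H1.3)(ii) furnishes constants $C_1,C_2$ with $\sup_N\sup_{x\in{\cal X}}|{\cal G}^N_x(h^i)|\le C_1$ and $\sup_N\sup_{x\in{\cal X}}|{\cal G}^N_x(h^ih^i)|\le C_2$. As the block $[v_N(\tau+\sigma)]-[v_N\tau]$ contains at most $v_N\sigma+1\le v_N\theta+1$ indices, I obtain the pathwise estimates $|A^{N,i}_{\tau+\sigma}-A^{N,i}_\tau|\le C_1(\theta+1/v_N)$ and, bounding each conditional variance by the conditional second moment $v_N^{-1}{\cal G}^N_{X^N_k}(h^ih^i)$, also $\langle M^{N,i}\rangle_{\tau+\sigma}-\langle M^{N,i}\rangle_\tau\le C_2(\theta+1/v_N)$.

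It remains to run the Aldous argument. Optional sampling of the martingale $(M^{N,i})^2-\langle M^{N,i}\rangle$ (valid since $M^{N,i}$ is an embedded discrete-time martingale with bounded increments, stopped at bounded times) gives $\E(|M^{N,i}_{\tau+\sigma}-M^{N,i}_\tau|^2)=\E(\langle M^{N,i}\rangle_{\tau+\sigma}-\langle M^{N,i}\rangle_\tau)\le C_2(\theta+1/v_N)$, so Chebyshev combined with the drift bound yields $\PP(|Y^{N,i}_{\tau+\sigma}-Y^{N,i}_\tau|>\eta)\le\frac{2}{\eta}C_1(\theta+1/v_N)+\frac{4}{\eta^2}C_2(\theta+1/v_N)$ for all $\sigma\le\theta$; taking $\limsup_N$ and then $\theta\to0$ gives the Aldous condition coordinatewise and hence for $Y^N$, which with compact containment proves tightness, exactly as in \cite{BCM}. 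The only step I would treat with genuine care is this uniform bound on the second-order characteristic ${\cal G}^N_x(h^ih^i)$ and the verification that the discretization error from $[v_N\cdot\,]$ is truly $O(1/v_N)$; the rest is bookkeeping. I note in particular that (H1.2) plays no role here — it is needed only later, for the identification of the limiting jump measure, not for tightness.
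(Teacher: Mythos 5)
Your proof is correct, but it follows a genuinely different route from the one the paper relies on. The paper never proves Theorem \ref{thm-tightness} in situ: the statement is quoted from \cite{BCM}, whose method (as recalled at the beginning of Section 3 of the paper) runs through the Jacod--Shiryaev theory of semimartingale characteristics --- the operators ${\cal G}^N_x$ evaluated on the functional space ${\cal H}$ are read as the rescaled predictable characteristics of $X^N_{[v_N\cdot]}$, and tightness follows from the criteria of \cite{JS} once those characteristics are uniformly controlled. You instead give a self-contained argument: Doob decomposition of each coordinate, the uniform bounds $\sup_N\sup_{x}|{\cal G}^N_x(h^i)|<\infty$ and $\sup_N\sup_{x}|{\cal G}^N_x(h^ih^i)|<\infty$ deduced from (H1.1) and (H1.3) by linearity (for the finitely many remaining $N$ the crude bound $v_N\|H\|_\infty$ suffices, and in any case only large $N$ matter for the $\limsup$), and Aldous's criterion, with compact containment free because $\overline{\cal X}=[0,1]^2$ is compact. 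The steps you flag as needing care do hold: the identities $\E\big(X^{N,i}_{k+1}-X^{N,i}_{k}\mid{\cal F}_k\big)=v_N^{-1}{\cal G}^N_{X^N_k}(h^i)$ and their second-moment analogue are exact (not approximations) because the increments lie in ${\cal U}$, where $h^i$ is literally the $i$-th coordinate; the count $[v_N(\tau+\sigma)]-[v_N\tau]\le v_N\theta+1$ is the right discretization error; and optional sampling for the embedded discrete martingale with bounded increments at bounded stopping times is legitimate. As for what each approach buys: yours is more elementary and isolates exactly what tightness requires --- your closing remark that (H1.2) is not needed here is consistent with this --- whereas the characteristics machinery of \cite{BCM}, though heavier, does double duty: the same uniform convergence of ${\cal G}^N_x$ on ${\cal H}$ that yields tightness is then what identifies every limit point as a semimartingale solution of the SDE in Theorem \ref{identification}, a step your Doob decomposition does not provide and which the paper still needs immediately afterwards.
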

%\marginpar{do you think it would be a good idea to put all Hypoteses before the subsection\ref{General Statements}???}

\me 
We observe that  that  for any 
$H\in C^2_{b,0}$,  there exists a unique decomposition of the form
\be
\label{decomp} H = \sum_{i=1}^2 \alpha_{i}(H) h^{i} + \sum_{i,j=1}^2 \beta_{i,j}(H)  h^{i}\, h^{j}+\overline{H},\ee where
 $\overline{H}(u_{1},u_{2})= o(|(u_{1},u_{2})|^2)$ is a continuous and bounded function and $\alpha_{i}(H)$, $\beta_{i,j}(H)$, $i,j=1 \cdots 2$ are real coefficients and $\beta$ is a symmetric matrix.

\me 
 The next hypothesis {(H2)} in addition to {(H1)}  is sufficient to get the identification
  of the limiting values by their  semimartingale characteristics, and  then their representation  as solutions of a stochastic differential equation. 
%We use  notations of Hypothesis {\bf (H1)} and Lemma \ref{lem-decomp}. 

\bi {\bf Hypotheses (H2)}   
 \begin{enumerate}
 \item    \emph{For any  $\,H\in {\cal H}$, the map $  x\in  {\cal X} \rightarrow{\cal G}_{x}(H) $ is continuous and extendable by continuity to $ \overline {\cal X}.$} 
\item  \emph{For
any $\,x\in \overline{{\cal X}}$ and any $\,H\in {\cal H}$},
\be
\label{h2}
 {\cal G}_{x}(H) =  \sum_{i=1}^2 \alpha_{i}(H) b_{i}(x) + \sum_{i,j=1}^2  \beta_{i,j}(H)c_{i,j}(x)   + \int_{V} \overline{H}(K(x,v)) \mu(dv), 
 \ee
 \end{enumerate}
 %\end{document}
 \emph{where
 \begin{itemize}
\item[i)] 
  $\alpha_{i}$, $\beta_{i,j}$ and $\overline{H}$ have been defined in \eqref{decomp}, 
  \item[ii)]  $b_{i}$ and $c_{i,j}$ are measurable functions defined  on $\overline{{\cal X}}$, 
  \item[iii)]  $V$ is a Polish space, $\mu$ is a $\sigma$-finite positive measure  on $V$, $K$ is a measurable function from $\overline{{\cal X}}\times V$ with values in ${\cal U}$,  $ \int_{V}   1\wedge\vert K(.,v)\vert^2  \mu(dv)<+\infty$ and 
$$c_{i,j}(x)=\sum_{k=1}^4\sigma_{i,k}(x)\sigma_{j,k}(x) +\int_{V} K_{i}(x,v)K_{j}(x,v) \mu(dv),$$
where $\sigma_{i,k}(x)$ are measurable functions  defined  on $\overline{{\cal X}}$ for $1\leq i\leq 2$ and $1\leq k\leq 4$. 
\end{itemize}}

% The elements $(b,\sigma,V,\mu,K)$ will  be specified in the applications. 
 %The proof of the next theorem will explicitly give   their relation with  the associated stochastic differential equation \eqref{eds}.

The main general result in \cite{BCM} yields the following statement here. A slight adaptation is needed
 since here the dimension  $2$ of the process $X$  differs from the dimension  $4$ of the brownian motion involved in the representation (one can also  consider a $4$-dimensional process by adding two coordinates identically null to match the precise framework of \cite{BCM}).

\bi
\begin{thm}
 \label{identification}  
  If the sequence $(X_0^N)_N$  is tight in $\overline{{\cal X}}$ 
and  {(H1)} and  {(H2)} hold  then any  limiting value of $\,(X^N_{[v_{N}.]}, N\in \mathbb{N})$  is a   semimartingale    solution of the  stochastic differential system
 \be
 \label{eds}
 X_{t}&=&X_{0} + \int_{0}^t b(X_{s}) ds + \int_{0}^t \sigma(X_{s}) dB_{s} + \int_{0}^t\int_{V} K(X_{s-},v) \tilde N(ds, dv),
 \ee
where  
 $X_{0}\in \overline{{\cal X}}$ and $B$ is a $4$-dimensional Brownian motion and
    $N$ is a Poisson Point  measure on $\R_{+}\times V$ with intensity $ds \mu(dv)$. Moreover  $X_{0}$, $B$, $N$ are independent and $\tilde N$ is the   compensated martingale measure of $N$.
    \end{thm}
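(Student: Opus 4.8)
The plan is to deduce this statement from the general convergence theorem of \cite{BCM}, whose standing hypotheses are exactly (H0), (H1), (H2) as recalled here, the only discrepancy being that the driving Brownian motion is taken in dimension $4$ while the process $X$ lives in dimension $2$. First I would invoke Theorem \ref{thm-tightness}: since $(X_0^N)_N$ is tight in $\overline{{\cal X}}$ and (H1) holds, the rescaled sequence $(X^N_{[v_N\cdot]})_N$ is tight in $\mathbb{D}([0,\infty),\overline{{\cal X}})$, so along a subsequence it converges in law to some càdlàg process $X$. It then remains to identify $X$ as a solution of \eqref{eds}.

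The core of the argument is the convergence of semimartingale characteristics in the sense of Jacod--Shiryaev \cite{JS}. The operator ${\cal G}^N_x$ defined in \eqref{GNx} encodes the conditional law of one increment of the rescaled chain; writing, for $H\in C^2_{b,0}$, its canonical decomposition \eqref{decomp}, the uniform convergence ${\cal G}^N_x(H)\to{\cal G}_x(H)$ provided by (H1.3), together with the density property (H1.2) controlling the remainder class $|h|^2 g_n$, lets one pass to the limit in the predictable characteristics of $X^N_{[v_N\cdot]}$. Using the representation \eqref{h2} of ${\cal G}_x$ furnished by (H2), the limit $X$ is then a semimartingale whose characteristics relative to the truncation $h$ are $B_t=\int_0^t b(X_s)\,ds$ for the drift, $C_t=\int_0^t c(X_s)\,ds$ for the continuous part with $c_{i,j}(x)=\sum_{k=1}^4\sigma_{i,k}(x)\sigma_{j,k}(x)+\int_V K_iK_j(x,v)\mu(dv)$, and a jump compensator equal to the image of $ds\,\mu(dv)$ under $v\mapsto K(X_{s-},v)$. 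In other words, $X$ solves the martingale problem attached to the triplet $(b,\sigma,(\mu,K))$.

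To upgrade this martingale problem into the strong-form representation \eqref{eds}, I would use the standard synthesis theorem for semimartingales (Jacod--Shiryaev \cite{JS}): on a possibly enlarged probability space there exist a Poisson point measure $N$ on $\RR_+\times V$ with intensity $ds\,\mu(dv)$ realizing the jumps of $X$ through the map $K$, so that the purely discontinuous martingale part equals $\int_0^t\int_V K(X_{s-},v)\tilde N(ds,dv)$, and a Brownian motion $B$ realizing the continuous martingale part as $\int_0^t\sigma(X_s)\,dB_s$, with $X_0$, $B$, $N$ independent. The jump reconstruction comes from the representation theorem for integer-valued random measures whose compensator is $ds\,\mu\circ K(X_{s-},\cdot)^{-1}$, and the continuous reconstruction comes from the martingale representation theorem applied to the $2$-dimensional continuous local martingale whose bracket is $\int_0^t(\sigma\sigma^{\mathsf{T}})(X_s)\,ds$.

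The one point where care is required, and the source of the ``slight adaptation'' over \cite{BCM}, is precisely this last factorization: here $\sigma(\cdot)$ is a $2\times 4$ matrix, so the $2\times 2$ continuous covariance $\sum_{k=1}^4\sigma_{i,k}\sigma_{j,k}$ is over-parametrized and a plain square factorization is not available. I would resolve this exactly as suggested in the preamble to the statement, by running \cite{BCM} on the $4$-dimensional process obtained from $X$ by appending two identically null coordinates: their dynamics are trivial, but they absorb the two extra Brownian directions and restore matched dimensions, so the $4\times 4$ covariance factorizes squarely and the continuous part is driven by a genuine $4$-dimensional $B$. Once the matched-dimension version of \cite{BCM} applies verbatim, reading off the first two coordinates yields \eqref{eds} and completes the identification.
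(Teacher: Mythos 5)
Your proposal is correct and takes essentially the same route as the paper: the paper gives no self-contained proof of Theorem \ref{identification}, but obtains it exactly as you do, by invoking the general result of \cite{BCM} (tightness from (H1), identification of the limiting characteristics from (H2), then the Jacod--Shiryaev representation theorem) and resolving the $2$-versus-$4$ dimension mismatch by appending two identically null coordinates. The only quibble is your claim that ``a plain square factorization is not available'': since $c=\sigma\sigma^{\mathsf{T}}$ is positive semidefinite it always admits a $2\times 2$ square root, so a two-dimensional Brownian representation would also exist; this is immaterial, though, because the padding argument you give recovers the stated four-dimensional form.
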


\section{Hypothesis  ${\bf (F6)}$}

In this appendix, we shall study more closely hypothesis ${\bf (F6)}$.

\begin{lem} Assume $\lambda_1$ satisfies ${\bf (F6)}$, and $\lambda_2$
satisfies $\int (z\wedge 1)\, \lambda_2(dz)<\infty$. Then
$\lambda=\lambda_1+\lambda_2$ satisfies ${\bf (F6)}$.
\end{lem}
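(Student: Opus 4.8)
The plan is to verify {\bf (F6)} for $\lambda$ with the \emph{same} constant $\varepsilon_0$ that witnesses it for $\lambda_1$. Throughout I would write $I_j(a)=\int_a^1 z\,\lambda_j(dz)$ and $V_j(a)=\int_0^a z^2\,\lambda_j(dz)$ for $j=1,2$, together with $I=I_1+I_2$ and $V=V_1+V_2$, so that the bracket in \eqref{hyp:H} for $\lambda$ is exactly $e^{\varepsilon_0 I(a)}V(a)$. The hypothesis $\int_0^\infty(z\wedge 1)\,\lambda_2(dz)<\infty$ furnishes a finite constant $C_2=\int_0^1 z\,\lambda_2(dz)$ with $I_2(a)\le C_2$ for every $a$, and, since $z^2\le a z$ on $(0,a]$, also $V_2(a)\le a\,C_2$ for $a\le 1$. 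Hence $e^{\varepsilon_0 I(a)}V(a)\le e^{\varepsilon_0 C_2}\,e^{\varepsilon_0 I_1(a)}\big(V_1(a)+V_2(a)\big)$, and it suffices to produce a sequence $b_k\downarrow 0$ along which $e^{\varepsilon_0 I_1(b_k)}\big(V_1(b_k)+V_2(b_k)\big)\to 0$.

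First I would use {\bf (F6)} for $\lambda_1$ to select $a_k\downarrow 0$ with $\varepsilon_k:=e^{\varepsilon_0 I_1(a_k)}V_1(a_k)\to 0$. The obstruction is that $e^{\varepsilon_0 I_1(a_k)}V_2(a_k)$ need not vanish: when $I_1(a_k)\to\infty$ the exponential weight may swamp the small quantity $V_2(a_k)$ coming from $\lambda_2$. The remedy is to descend to a carefully chosen $b_k\le a_k$. I claim one can pick $b_k\in(0,a_k]$ with
\[
\int_{b_k}^{a_k} z\,\lambda_1(dz)\le 1 \qquad\text{and}\qquad e^{\varepsilon_0 I_1(a_k)}\,b_k\to 0 .
\]
Indeed, if $\int_0^{a_k} z\,\lambda_1(dz)\le 1$ one may simply take $b_k=\tfrac1k e^{-\varepsilon_0 I_1(a_k)}\wedge a_k$, which is admissible and gives $e^{\varepsilon_0 I_1(a_k)}b_k\le 1/k$. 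Otherwise, set $c_k=\inf\{b\in(0,a_k]:\int_b^{a_k} z\,\lambda_1(dz)\le 1\}$; for $b<c_k$ one has $1<\int_b^{a_k} z\,\lambda_1(dz)\le b^{-1}\int_b^{a_k} z^2\,\lambda_1(dz)\le V_1(a_k)/b$, whence $b\le V_1(a_k)$ and therefore $c_k\le V_1(a_k)=\varepsilon_k e^{-\varepsilon_0 I_1(a_k)}$. Choosing $b_k=2c_k\wedge a_k$ then keeps $\int_{b_k}^{a_k}z\,\lambda_1(dz)\le 1$ (upward monotonicity in the upper tail) while yielding $e^{\varepsilon_0 I_1(a_k)}b_k\le 2\varepsilon_k$.

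To conclude, I would observe that the mass constraint forces $I_1(b_k)=I_1(a_k)+\int_{b_k}^{a_k}z\,\lambda_1(dz)\le I_1(a_k)+1$. Combined with the monotonicity $V_1(b_k)\le V_1(a_k)$ this gives $e^{\varepsilon_0 I_1(b_k)}V_1(b_k)\le e^{\varepsilon_0}\,e^{\varepsilon_0 I_1(a_k)}V_1(a_k)=e^{\varepsilon_0}\varepsilon_k\to 0$, while $V_2(b_k)\le b_k C_2$ gives $e^{\varepsilon_0 I_1(b_k)}V_2(b_k)\le e^{\varepsilon_0}C_2\,e^{\varepsilon_0 I_1(a_k)}b_k\to 0$. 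Summing the two estimates and reinstating the factor $e^{\varepsilon_0 C_2}$ from the first paragraph shows $e^{\varepsilon_0 I(b_k)}V(b_k)\to 0$ with $b_k\downarrow 0$, so the $\liminf$ in \eqref{hyp:H} equals $0$ and $\lambda$ satisfies {\bf (F6)}.

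The main obstacle is precisely the point just flagged: the sequence witnessing {\bf (F6)} for $\lambda_1$ generally fails to witness it for $\lambda$, since $e^{\varepsilon_0 I_1(a_k)}$ can diverge and overwhelm the nonzero contribution $V_2(a_k)$ of the added measure. The elementary inequality $c_k\le V_1(a_k)$ — one cannot accumulate a unit of $z$-mass of $\lambda_1$ on an interval whose left endpoint exceeds $V_1(a_k)$ — is what makes the argument work: it guarantees that $b_k$ can be slid below $a_k$ by a factor small enough for the exponential weight to absorb, so that the $\lambda_1$-part and the $\lambda_2$-part are driven to $0$ at once. Everything else reduces to monotonicity and to the crude bounds $I_2\le C_2$ and $V_2(a)\le a C_2$ supplied by the integrability of $\lambda_2$.
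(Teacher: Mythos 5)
Your proof is correct and is essentially the paper's own argument: starting from a sequence $(a_k)$ witnessing ${\bf (F6)}$ for $\lambda_1$, you slide the cutoff down to a point $b_k\le a_k$ chosen so that the intervening mass $\int_{b_k}^{a_k}z\,\lambda_1(dz)$ stays bounded by a constant (so the exponent only increases by a fixed amount) while $b_k\lesssim V_1(a_k)$, this last bound coming from the same elementary inequality $\int_b^{a_k}z\,\lambda_1(dz)\le b^{-1}\int_b^{a_k}z^2\,\lambda_1(dz)$ that drives the paper's estimates, after which the $\lambda_2$-part is absorbed via $\int_0^{b_k}z^2\,\lambda_2(dz)\le C_2\,b_k$. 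The only difference is organizational: the paper first disposes separately of the case $\int_0^1 z\,\lambda_1(dz)<\infty$, then builds a global unit-mass ladder $(c_n)$ and runs a two-case analysis on where $a_k$ falls in it, whereas you define the new cutoff directly as an infimum relative to each $a_k$, which merges all of these into a single per-$k$ dichotomy.
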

\begin{proof} If $\lambda_1$ also satisfies $\int (z\wedge 1)\, \lambda_1(dz)<\infty$, we
have for $\lambda=\lambda_1+\lambda_2$ and any $a\le 1, \epsilon>0$
$$
e^{\epsilon\int_{(a,1]} z\,\lambda(dz)}\int_0^a z^2\,\lambda(dz)\le 
e^{\epsilon\int_{[0,1]} z\,\lambda(dz)}\int_0^a z^2\,\lambda(dz),
$$
which converges to $0$ as $a$ converges to $0$. Hence, $\lambda$ satisfies ${\bf (F6)}$.

\smallskip

So, for the rest of the proof we shall assume that $\int_0^1 z\, \lambda_1(dz)=+\infty$. 
In what follows we denote by $K=\int_{[0,1]} z\,\lambda_2(dz)<\infty$. We define inductively
$c_0=1$ and given $c_n$ we consider $0<c_{n+1}<c_n$ characterized by
$$
c_{n+1}=\sup\left\{0\le c<c_n:\, \int_{[c,c_n)} z\, \lambda_1(dz)\ge 1\right\}.
$$
Now, since $\lambda_1$ satisfies ${\bf (F6)}$ there exists a sequence $(a_k)_k\subset [0,1]$ such that 
$a_k\downarrow 0$ and 
$$
r(a_k)=e^{\epsilon_0\int_{(a_k,1]} z\lambda_1(dz)}\int_0^{a_k} z^2\,\lambda_1(dz)\to 0.
$$
Consider for every $k$ the unique $c_{n_k}$ such that $a_k\in [c_{1+n_k},c_{n_k})$. We consider two
possible situations: 
\begin{itemize}
\item[(i)]  $\int_{[c_{1+n_k},a_k]} z\lambda_1(dz)\le 1/2$;
\item[(ii)] $\int_{[c_{1+n_k},a_k]} z\lambda_1(dz)> 1/2$.
\end{itemize}
In the first case we have $\int_{(c_{2+n_k},a_k]} z\, \lambda_1(dz)\le 3/2$ and
$\int_{[c_{2+n_k},a_k]} z\, \lambda_1(dz)\ge 1$. On the one hand
$$
\begin{array}{ll}
r(a_k)\hspace{-0.3cm}&=e^{\epsilon_0\int_{(a_k,1]} z\lambda_1(dz)}\int_0^{a_k} z^2\,\lambda_1(dz)\ge
e^{\epsilon_0\int_{(a_k,1]} z\lambda_1(dz)}\int_{[c_{2+n_k},a_k]} z^2\,\lambda_1(dz)\\
\\
\hspace{-0.3cm}&\ge  c_{2+n_k} e^{\epsilon_0\int_{(a_k,1]} z\lambda_1(dz)} \int_{[c_{2+n_k},a_k]} z\,\lambda_1(dz)
\ge c_{2+n_k} e^{\epsilon_0\int_{(a_k,1]} z\lambda_1(dz)} \\
\\
\hspace{-0.3cm}&\ge e^{\epsilon_0\int_{(c_{2+n_k},1]} z\lambda_1(dz)-\frac32\epsilon_0}\; c_{2+n_k}.
\end{array}
$$
With this estimation we obtain for $d_k=c_{2+n_k}$
$$
\begin{array}{ll}
e^{\epsilon_0\int_{(d_k,1]} z\lambda_1(dz)} \int_0^{d_k} z^2 \lambda_2(dz)
\hspace{-0.3cm}&\le c_{2+n_k} e^{\epsilon_0\int_{(c_{2+n_k},1]} z\lambda_1(dz)} \int_0^{d_k} z \lambda_2(dz)\;\\
\\
\hspace{-0.3cm}&\le  K e^{\frac32\epsilon_0}\; r(a_k).
\end{array}
$$
On the other hand 
$$
\begin{array}{ll}
e^{\epsilon_0\int_{(d_k,1]} z\lambda_1(dz)} \int_0^{d_k} z^2 \lambda_1(dz)
\hspace{-0.3cm}&\le e^{\epsilon_0\int_{(a_k,1]} z\lambda_1(dz)+\frac32 \epsilon_0} \int_0^{a_k} z^2 \lambda_1(dz)\;\\
\\
\hspace{-0.3cm}&\le e^{\frac32\epsilon_0}\; r(a_k).
\end{array}
$$
This gives the bound
$$
e^{\epsilon_0\int_{(d_k,1]} z\lambda(dz)} \int_0^{d_k} z^2 \lambda(dz)\le
e^{\epsilon_0 (K+\frac32)}( K+1)\; r(a_k).
$$
In the second case we have $\int_{(c_{1+n_k},a_k]} z\lambda_1(dz) < 1$, by the definition of $c_{1+n_k}$, and 
therefore
$$
\begin{array}{ll}
r(a_k)\hspace{-0.3cm}&=e^{\epsilon_0\int_{(a_k,1]} z\lambda_1(dz)}\int_0^{a_k} z^2\,\lambda_1(dz)\ge
e^{\epsilon_0\int_{(a_k,1]} z\lambda_1(dz)}\int_{[c_{1+n_k},a_k]} z^2\,\lambda_1(dz)\\
\\
\hspace{-0.3cm}&\ge c_{1+n_k} e^{\epsilon_0\int_{(a_k,1]} z\lambda_1(dz)} \int_{[c_{1+n_k},a_k]} z\,\lambda_1(dz)
\ge \frac12 c_{1+n_k} e^{\epsilon_0\int_{(a_k,1]} z\lambda_1(dz)} \\
\\
\hspace{-0.3cm}&\ge \frac12 c_{1+n_k} e^{\epsilon_0\int_{(c_{1+n_k},1]} z\lambda_1(dz)-\epsilon_0}\; .
\end{array}
$$
Similarly as before, we take $d_k=c_{1+n_k}$ which gives
$$
\begin{array}{ll}
e^{\epsilon_0\int_{(d_k,1]} z\lambda_1(dz)} \int_0^{d_k} z^2 \lambda_2(dz)
\hspace{-0.3cm}&\le c_{1+n_k} e^{\epsilon_0\int_{(c_{1+n_k},1]} z\lambda_1(dz)} \int_0^{d_k} z \lambda_2(dz)\;\\
\\
\hspace{-0.3cm}&\le  2K e^{\epsilon_0}\; r(a_k).
\end{array}
$$
Again, we have
$$
\begin{array}{ll}
e^{\epsilon_0\int_{(d_k,1]} z\lambda_1(dz)} \int_0^{d_k} z^2 \lambda_1(dz)
\hspace{-0.3cm}&\le e^{\epsilon_0\int_{(a_k,1]} z\lambda_1(dz)+\epsilon_0} \int_0^{a_k} z^2 \lambda_1(dz)\;\\
\\
\hspace{-0.3cm}&\le e^{\epsilon_0}\; r(a_k).
\end{array}
$$
which allows us to show
$$
e^{\epsilon_0\int_{(d_k,1]} z\lambda(dz)} \int_0^{d_k} z^2 \lambda(dz)\le
e^{\epsilon_0 (K+1)}( 2K+1)\; r(a_k).
$$
We summarize these estimations in both cases as
$$
e^{\epsilon_0\int_{(d_k,1]} z\lambda(dz)} \int_0^{d_k} z^2 \lambda(dz)\le 
e^{\epsilon_0 (K+\frac32)}( 2K+1)\; r(a_k)
$$
The result follows by noticing that $(d_k)_k$ converges to $0$.
\end{proof}

\begin{rem} Notice that if $\int (z\wedge 1)\, \lambda(dz)<\infty$ then both 
$\int (z\wedge 1)\, \lambda_1(dz)<\infty, \int (z\wedge 1)\, \lambda_2(dz)<\infty$
and a fortiori both $\lambda_1,\lambda_2$ fullfill ~${\bf (F6)}$. 
Moreover,  $\lambda=\lambda_1+\lambda_2$ satisfies ${\bf (F6)}$. 

\smallskip

It is also direct to show that if  $\lambda$ satisfies ${\bf (F6)}$, then both $\lambda_1,\lambda_2$ 
fulfill ~${\bf (F6)}$. Nevertheless, it is not true that if both $\lambda_1,\lambda_2$ satisfies 
${\bf (F6)}$ then $\lambda$ satisfies ${\bf (F6)}$. This makes the previous Lemma more interesting.
\end{rem}
We now give sufficient conditions to have hypothesis ${\bf (F6)}$. 
Assume that $\lambda(dz)=\frac{f(z)}{z^2} dz$, with $f\ge 0$ and $\int_0^1 f(z) dz<\infty$, 
$\int_{0+}\frac{f(z)}{z}\, dz=\infty$. 
After taking logarithm, condition ${\bf (F6)}$ holds if 
\begin{equation}
\label{eq:8}
\sup\limits_{0<a<a_0}\frac{\int_a^{a_0} \frac{f(z)}{z} dz}{-\log(\int_0^a f(z) \, dz)}<\infty,
\end{equation}
for some small $a_0$. This condition is satisfied if
\begin{equation}
\label{eq:9}
r=\sup\limits_{0<a<a_0}\frac{\int_0^a f(z) dz}{a}<\infty.
\end{equation}
Indeed, for all small $z$ we have $\int_0^z f(u) du\le r z$ and therefore
$$
\int_a^{a_0} \frac{f(z)}{z} dz\le r \int_a^{a_0} \frac{f(z)}{\int_0^z f(u) du} dz=r\left(\log\left(\int_0^{a_0} f(u) du\right)-
\log\left(\int_0^{a} f(u) du\right)\right).
$$
Hence, if $a_0$ is small, we have $\int_0^{a_0} f(u) du<1$ and then
$$
0\le \frac{\int_a^{a_0} \frac{f(z)}{z} dz}{-\log(\int_0^a f(z) \, dz)}\le r\left(\frac{\log\left(\int_0^{a_0} f(u) du\right)}{-\log(\int_0^a f(z) \, dz)}+1\right)\le r.
$$
which shows \eqref{eq:8}.

Notice that \eqref{eq:9} is satisfied if $f$ is bounded near $0$. For example $f=1$, which gives 
$\lambda(dz)=\frac{1}{z^2} dz$.
Clearly $f(z)=-\log(z)$, for $z$ small, does not satisfies \eqref{eq:9}. It is quite direct to show that it does not satisfies 
\eqref{eq:8} nor \eqref{hyp:H}.

\end{document}